\def\eps{\varepsilon}
\def\Om{\Omega}
\def\tcI{\widetilde{\cI}}
\def\tO{\widetilde{\O}}
\def\tcT{\widetilde{\cT}}
\def\C{{\mathcal C}}
\def\N{{\mathbb N}}
\def\T{{\mathcal T}}
\theoremstyle{definition}
\def\dist{{\rm dist\,}}
\def\supp{{\rm supp\,}}
\newcommand{\R}{\mathbb{R}}
\newcommand{\brac}[1]{\left (#1 \right )}
\newcommand{\abs}[1]{\left |#1 \right |}
\newcommand{\lap}{\Delta }
\newcommand{\aleq}{\precsim}
\newcommand{\ageq}{\succsim}
\newcommand{\aeq}{\approx}
\newcommand{\rlaps}[1]{(-\lap_{\Omega})^{\frac{#1}{2}}}
\newcommand{\laps}[1]{(-\lap)^{\frac{#1}{2}}}
\newcommand{\lapms}[1]{I^{#1}}
\newcommand{\vn}{{\vec{N}}}
\def\R{\mathbb{R}}
\def\C{\mathbb{C}}
\def\N{\mathbb{N}}
\def\T{\mathbb{T}}
\def\cA{\mathcal{A}}
\def\cB{\mathcal{B}}
\def\cF{\mathcal{F}}
\def\cI{\mathcal{I}}
\def\cN{\mathcal{N}}
\def\cO{\mathcal{O}}
\def\cS{\mathcal{S}}
\def\cT{\mathcal{T}}
\def\b{\beta}
\def\d{\delta}
\def\l{\lambda}
\def\p{\partial}
\def\veps{\varepsilon}
\def\vphi{\varphi}
\def\O{\Omega}
\def\G{\Gamma}
\def\wto{\rightharpoonup}
\def\transp{{\sf T}}
\def\ou{\overline{u}}
\def\hu{\widehat{u}}
\def\tu{\widetilde{u}}
\newcommand{\dv}[1]{\,{\mathrm d}#1}
\newcommand{\wcheck}[1]{#1\hspace{-.8ex}\mbox{\huge {\lower.45ex \hbox{$\textstyle \check{}$}}} \hspace{.5ex}}
\let\oldmarginpar\marginpar
\renewcommand\marginpar[1]{
  \oldmarginpar[\raggedleft\footnotesize #1]
  {\raggedright\footnotesize #1}}
\newtheorem{definition}{Definition}
\newtheorem{lemma}[definition]{Lemma}
\newtheorem{proposition}[definition]{Proposition}
\newtheorem{theorem}[definition]{Theorem}
\newtheorem{corollary}[definition]{Corollary}
\newtheorem{remark}[definition]{Remark}
\newtheorem{example}[definition]{Example}
\newtheorem{algorithm}[definition]{Algorithm}
\numberwithin{definition}{section}
\def\tH{\widetilde{H}}
\def\inv{{\rm inv}}
\def\tv{\widetilde{v}}
\def\i{{\rm i}}
\author{Harbir Antil}
\address[Harbir Antil]{Department of Mathematical Sciences and the Center for Mathematics and Artificial Intelligence (CMAI) 
George Mason University, Fairfax, VA 22030, USA. \newline
ORCiD: 0000-0002-6641-1449}
\email{hantil@gmu.edu}
\author{S\"oren Bartels}
\address[S\"oren Bartels]{Department for Applied Mathematics
Albert Ludwigs University of Freiburg 79104, Germany.  \newline 
ORCiD: 0000-0002-8084-5112}
\email{bartels@mathematik.uni-freiburg.de}
\author{Armin Schikorra}
\address[Armin Schikorra]{Department of Mathematics, University of Pittsburgh, Pittsburgh, PA 15261, USA.\newline
ORCiD: 0000-0001-9242-1782}
\email{armin@pitt.edu}
\title{Approximation of Fractional Harmonic Maps}
\thanks{HA is partially supported by NSF grants DMS-1818772 and DMS-1913004, the Air Force Office of Scientific Research 
  under Award NO: FA9550-19-1-0036, and the Department of the Navy, Naval Postgraduate School under Award NO: N00244-20-1-0005.}
\thanks{SB acknowledges support by the DFG via the Research Unit FOR 3013 {\em Vector- and tensor-valued surface PDEs}.}
\thanks{AS is supported by NSF Career DMS-2044898 and Simons foundation grant no 579261}
\begin{document}

\begin{abstract}
This paper addresses the approximation of fractional harmonic maps. Besides a 
unit-length constraint, one has to tackle the difficulty of nonlocality. We establish 
weak compactness results for critical points of the fractional Dirichlet energy on 
unit-length vector fields. We devise and analyze numerical methods for the approximation 
of various partial differential equations related to fractional harmonic maps. 
The compactness results imply the convergence of numerical approximations. 
Numerical examples on spin chain dynamics and point defects are presented
to demonstrate the effectiveness of the proposed methods. 
\end{abstract}

\keywords{
fractional derivatives, 
harmonic maps,
nonlocality,
compactness,
finite element method,
algorithms,
convergence analysis,
spectral method,
spin chains,
defects}

\subjclass[2010]{
35K20,  	
35R11,  	
35S15,  	
65R20  	
}

\maketitle

\section{Introduction}\label{sec:intro}
A fundamental problem in the calculus of variations concerns 
critical points of energy functionals subject to pointwise 
constraints. Related applications arise in ferromagnetism to 
model magnetization fields, liquid crystal theories defining
orientations of rod-like molecules, continuum mechanics for
describing inextensible rods and unshearable plates, and in
quantum mechanics for spin systems. We refer the reader to 
the articles~\cite{Alou97,BarPro06,BaFePr07,BBFP07,Alou08,KarWeb14,Bart15,HPPRSS19,KPPRS19,BoNoWa20,nochetto2017finite} 
for corresponding mathematical models with  
numerical methods and to~\cite{LenSch18,GerLen18,BLSS20} for recent analytical results.

In this article we consider the case of energies related to the 
fractional Laplace operator. Fractional operators are nonlocal 
and enable long range interactions. They enforce less smoothness
in comparison to their classical counterparts. These features 
make them attractive for applications leading to certain singularities
such as defects in the mathematical description of liquid crystals,
which are often modeled by harmonic maps. 
While some ideas from the treatment of standard, local 
differential operators can be employed to define stable numerical
schemes, new ideas are required to establish the convergence of
discrete stationary configurations.

Our starting point is a fractional Dirichlet energy 
\begin{equation}\label{eq:fHmap}
I[u] = \frac12 \int_\O |(-\Delta)^{\frac{s}{2}} u|^2 \dv{x}
\end{equation}
for an appropriate definition of the fractional Laplace operator
$(-\Delta)^{\frac{s}{2}}$ with $0 < s < 1$. Here $\O\subset \R^d$ is an open bounded 
domain with Lipschitz boundary $\partial\O$. We then consider 
stationary points for $I$ subject to a unit-length constraint, i.e., 
in the set
\[
\cA = \{v- \vec{N} \in \tH^s(\O;\R^N): |v(x)|^2 = 1 
\ \text{for a.e.} \, x\in \O\},
\]
where 
\[
	\widetilde{H}^s(\Omega;\mathbb{R}^N) 
	 = \{ f \in H^s(\mathbb{R}^d;\R^N) \, : \, f = 0 \quad \mbox{in } \mathbb{R}^d \setminus \Omega  \} ,
\]	
and $\vec{N}\in C^\infty(\R^d;\R^N)$ is a fixed vector field that defines a unit-length exterior Dirichlet condition on 
$\R^d \setminus \O$. Obviously, a homogeneous boundary condition is incompatible
with the unit-length constraint.

Stationary points for $I$ in $\cA$ are called {\em fractional
harmonic maps} and are formally characterized
by the Euler--Lagrange equations
\begin{equation}\label{eq:frac_hm}
(-\Delta)^s u = \l u \quad \mbox{in } \O, \quad |u|^2 = 1 \quad \mbox{in } \O, \quad u|_{\R^d\setminus\O} = \vec{N}, 
\end{equation}
where $\l\in L^1(\O)$ is a Lagrange multiplier related to the
pointwise unit-length constraint. The function $\l$ depends
nonlinearly on the vector field $u$, e.g., in the classical
case $s=1$ we have that $\l = |\nabla u|^2$. This critical
nonlinear dependence requires appropriate arguments to show
that accumulation points of bounded sequences of solutions
are again solutions of the nonlinear equation. Such stability results
are crucial for showing that numerical approximations converge
to fractional harmonic maps. 

A useful equivalent characterization of fractional harmonic
maps is the weak formulation 
\begin{equation}\label{eq:frac_hm_weak}
\big((-\Delta)^{\frac{s}{2}} u, (-\Delta)^{\frac{s}{2}} v \big) = 0
\end{equation}
for all $v\in \tH^s(\O;\R^N)$ satisfying the pointwise 
orthogonality relation $u \cdot v = 0$ almost everywhere in~$\O$.
We refer the reader to section~\ref{s:not} below for a specification
of the bilinear form in~\eqref{eq:frac_hm_weak}.
This characterization states that critical points are 
stable with respect to tangential perturbations. If $N=3$
then the latter equation is equivalent to the identity 
\begin{equation}\label{eq:frac_hm_cross}
\big((-\Delta)^{\frac{s}{2}}u, (-\Delta)^{\frac{s}{2}} (u\times \phi) \big) = 0
\end{equation}
for all $\phi\in C^\infty_c(\O;\R^3)$. 
Observe that for $|u| = 1$ and $v \cdot u=0$ we have $v=u \times (v \times u)$. 
We further note that the latter identity can be generalized to other target dimensions
$N\neq 3$ by considering $v = Xu$ with a skew-symmetric matrix valued mapping
$X:\O \to so(N)$ whose pointwise application to $u$ is identified with
a product $\phi \wedge u$, where $\phi(x)$ is for almost every $x\in \O$
a skew-symmetric bilinear form that is identified with a vector $\phi(x)\in \R^{N'}$;
for ease of readability we also write in this case $u\times \phi$.

It turns out that 
a limit passage in the nonlinear equation~\eqref{eq:frac_hm_cross} is possible.
In particular, in section~\ref{s:compact}, we 
shall establish that if $\{u_j\}_{j\in \mathbb{N}} \subset {\mathcal{A}}$, such that
 $u_j \rightharpoonup u$ in an appropriate fractional order Sobolev space, 
 as $j \rightarrow \infty$, then 
	\begin{equation}
		\label{eq:lim}
		\left((-\Delta)^{\frac{s}2} u_j, (-\Delta)^{\frac{s}2} (u_j \times \phi) \right) \rightarrow \left((-\Delta)^{\frac{s}2} u, (-\Delta)^{\frac{s}2} (u \times \phi)\right)
	\end{equation}
for all $\phi \in C^\infty_c(\O;\R^{N'})$. 
The key challenge here is the fact that due to nonlocality of $(-\Delta)^s$, the standard 
arguments from the classical case of $s=1$ cannot be applied. Our proof uses a localization 
argument combined with properties of the Hardy-Littlewood maximal function. Simpler arguments
lead to this result when the fractional Laplace operator is defined via a Fourier
transformation. We use \eqref{eq:lim} 
to carry out critical limit passages in the justification of three numerical problems related 
to fractional harmonic maps. We refer the reader to the pioneering work \cite{DLR2011}
and to the contributions~\cite{M11,R18,S15,MillotSire15,MS18,MP20,MillotPegonSchikorra20}
for various properties of minimizing, stationary, and critical fractional harmonic maps. 

\subsection*{Discrete fractional harmonic maps}

The first numerical problem concerns the convergence of discrete
fractional harmonic maps as the mesh-sizes of underlying
triangulations tend to zero. We consider a sequence
$\{\cT_h\}_{h>0}$ of uniformly shape regular triangulations
of the polygonal or polyhedral Lipschitz domain $\O \subset \R^d$
with maximal mesh-sizes $h\to 0$.
Discrete fractional harmonic
maps belong to the discrete admissible set 
\[
\cA_h = \{v_h - \tcI_h \vec{N} \in \cS^1_0(\cT_h)^N: |v_h(z)|^2 = 1 
\text{ for all } z\in \cN_h\},
\]
where $\cS^1_0(\cT_h)$ is the space
of piecewise linear, globally continuous functions for a
triangulation $\cT_h$ of $\widetilde\O$ vanishing in the exterior 
$\widetilde\O \setminus \O$; the set 
$\cN_h$ contains the vertices of elements inside $\O$ at which the 
unit-length constraint is imposed, $\cI_h$ and $\tcI_h$ are the 
nodal interpolation operators on $\cT_h$ and an extension 
$\widetilde{\cT}_h$ which provides a triangulation of a domain $\tO$ 
such that $\overline{\O}\subset \tO$
and the support of $\vec{N}$ is contained in $\tO$.

We then define discrete fractional harmonic maps as vector fields
$u_h\in \cA_h$ with the property 
\begin{equation}\label{eq:frac_hm_discr}
\big((-\Delta)^{\frac{s}{2}} u_h, (-\Delta)^{\frac{s}{2}} v_h \big) = 0
\end{equation}
for all $v_h \in \cF_h[u_h]$, where $\cF_h[u_h]$ is defined 
as
\[
\cF_h[u_h] = \left\{v_h\in \cS^1_0(\cT_h)^N: v_h(z) \cdot 
u_h(z) = 0 \text{ for all } z\in \cN_h \right\}. 
\]
If $N=3$ then the vector fields $v_h\in \cF_h[u_h]$ are
represented by 
\[
v_h = \cI_h[u_h \times \phi]
\]
for $\phi\in C^\infty_c(\O;\R^3)$. 
In section~\ref{s:fhm}, we will show that if $\{u_h\}_{h>0}$ is a bounded sequence
of discrete fractional harmonic maps then every weak limit 
$u \in \tH^s(\O;\R^N)$ as $h\to 0$ is a fractional harmonic map. 
Compact perturbations $R_h$ that model solution errors or consistency terms
can be included on the right-hand side of~\eqref{eq:frac_hm_discr} 
and incorporated in the analysis provided that $\|R_h\|_{\tH^s(\O;\R^N)'} \to 0$
as $h\to 0$. For ease of presentation, 
we assume an exact discretization of the bilinear form associated with
the fractional Laplace operator. We refer
the reader to~\cite{GAcosta_FMBersetche_JPBorthagaray_2017a,GAcosta_JPBorthagaray_NHeuer_2019a,HAntil_RKhatri_MWarma_2019a}
for corresponding results in the context of the linear fractional
Poisson problem. In our experiments we follow~\cite{GAcosta_FMBersetche_JPBorthagaray_2017a}
and~\cite{AntBar17} for finite element and spectral method implementations, 
respectively. We also refer to \cite{bonito2019numerical,HAntil_PDondl_LStriet_2020a} 
for other efficient approaches to implement integral fractional Laplacian. Some other applications of fractional
operators include, imaging \cite{AntBar17}, geophysics \cite{CJWeiss_BGVBWaanders_HAntil_2020a},
and optimal control \cite{HAntil_MWarma_2020a}. 

\subsection*{Fractional harmonic map heat flow}

The second application addresses a parabolic evolution defined by
the $L^2$-gradient flow for $I$ given in \eqref{eq:fHmap}; 
it was studied analytically in \cite{S88,PG11,SSW17}. Its discretization 
or discretizations of gradient flows for other metrics define  
fully practical methods to determine discrete fractional harmonic
maps. The $L^2$-flow of fractional harmonic maps is formally given
by the partial differential equation  
\[
\p_t u = - (-\Delta)^s u + \l u, \quad |u|^2 = 1,
\]
where again $\l$ is the Lagrange multiplier subject to the unit-length
constraint. Rigorously, we define solutions of the fractional
harmonic map heat flow as maps $u:(0,T)\times \O \to \R^N$ with 
\[
u-\vec{N} \in H^1(0,T;L^2(\O;\R^N)) \cap L^\infty(0,T;\tH^s(\O;\R^N))
\]
that satisfy $u(0)= u_0$ for a given vector field $u_0\in \cA$,
the constraint $|u(t,x)|^2=1$ almost everywhere in $(0,T)\times \O$,
and with the inner product $(\cdot,\cdot)$ in $L^2(\O;\R^N)$
\begin{equation}\label{eq:frac_hm_heat_flow}
(\p_t u,v) + \big((-\Delta)^{\frac{s}{2}} u, (-\Delta)^{\frac{s}{2}} v\big) = 0
\end{equation}
for all vector fields $v\in \tH^s(\O;\R^N)$ and almost every $t\in (0,T)$
with the orthogonality relation
\[
u(t,x) \cdot v(x) = 0
\]
for almost every $(t,x)\in (0,T)\times \O$, we furthermore require solutions
to satisfy an energy-decay property. Our numerical scheme
adopts ideas from~\cite{Alou97,Bart15-book,Bart16} and
imposes the orthogonality condition at the nodes of a triangulation
in an explicit way while the evolution equation is discretized
implicitly with the backward difference quotient operator 
\[
d_t u^k = \tau^{-1} (u^k - u^{k-1})
\]
for a step size $\tau>0$. We hence compute a sequence 
\[
\{u_h^k\}_{k=0,\dots,K} \in \vec{N} + \cS^1_0(\cT_h)^N
\]
such that $u_h^0 = u_{0,h}$ and $d_t u_h^k \in \cF_h[u_h^{k-1}]$  is
for $k=1,2,\dots,K$ such that 
\begin{equation}\label{eq:frac_hm_flow_discr}
(d_t u_h^k , v_h) + \big((-\Delta)^{\frac{s}{2}} u_h^k, (-\Delta)^{\frac{s}{2}} v_h \big) = 0
\end{equation}
for all $v_h \in \cF_h[u_h^{k-1}]$, i.e., for all $v_h\in \cS^1_0(\cT_h)^N$ with 
\[
u_h^{k-1}(z) \cdot v_h(z) =0
\]
for all $z\in \cN_h$. Note that the problems in the time steps are 
linear systems with unique solutions and testing with $v_h = d_t u_h^k$
shows the energy monotonicity
\begin{equation}\label{eq:ener_mon}
\|d_t u_h^k\|^2 + \frac{d_t}{2}\|(-\Delta)^{\frac{s}{2}} u_h^k\|^2 
+ \frac{\tau}{2} \|d_t u_h^k\|^2 = 0.
\end{equation}
The linearized, explicit treatment 
of the constraint leads to a violation that is controlled by the step-size
$\tau>0$, i.e., since $d_t u_h^k(z) \cdot u_h^{k-1}(z)= 0$ and
$u_h^k = u_h^{k-1} + \tau d_t u_h^k$ we have
\[
|u_h^k(z)|^2 = |u_h^{k-1}(z)|^2 + \tau^2 |d_t u_h^k(z)|^2 
= \dots = |u_h^0(z)|^2 + \tau^2 \sum_{\ell=1}^k |d_t u_h^k(z)|^2.
\]
By carrying out a discrete integration, i.e., multiplying by local volumes~$\b_z$
and summing over $z\in \cN_h$, and noting $|u_h^0(z)|^2=1$, we find that
\[
\big\||u_h^k|^2 -1\big\|_{L^1_h(\O)} 
\le \tau^2 \sum_{\ell=1}^k \|d_t u_h^k\|_{L^2_h(\O)}^2.
\]
The right-hand side is of order $\cO(\tau)$ owing to~\eqref{eq:ener_mon},
and we have 
\[
\|v\|_{L^p_h(\O)}  = \int_\O \cI_h |v|^p \dv{x} = \sum_{z\in \cN_h} \b_z |v(z)|^p, \quad \b_z = \int_\O \vphi_z\dv{x},
\]
with the nodal basis functions $\{\vphi_z\}_{z\in \cN_h}$.

\subsection*{Hyperbolic system for spin dynamics}

The third numerical problem is a hyperbolic evolution equation
determined by the force balance
\begin{equation}\label{eq:spin_system}
\p_t u = \d I[u] \times u = (-\Delta)^s u \times u, \quad |u|^2 =1, 
\end{equation}
which has been used to model nonlocal effects in spin chains, cf.~\cite{ZhoSto15,GerLen18,LenSch18}.
This evolution is constraint and energy preserving which follows
directly from testing the equation with $u$ and $(-\Delta)^s u$, 
respectively. To obtain these features for a discretization, we
follow~\cite{KarWeb14,Bart15} and 
use Crank-Nicolson type midpoint approximations, i.e., we consider
the time stepping scheme
\[
d_t u^k = - u^{k-1/2} \times (-\Delta)^s u^{k-1/2},
\]
with the average 
\[
u^{k-1/2} = \frac12 (u^{k-1} + u^k).
\]
A binomial formula then implies discrete energy and constraint
preservation, e.g., testing with $u^{k-1/2}$ implies that
\[
d_t |u^k|^2 = d_t u^k \cdot u^{k-1/2} = 0,
\]
so that $|u^k|^2 = |u^{k-1}|^2 = \dots = |u^0|^2$ almost everywhere
in $\O$. A spatial discretization uses quadrature to allow for
a localization of the preservation properties, i.e., 
\begin{equation}\label{eq:spin_system_discr}
(d_t u_h^k,v_h)_h = \big( (-\Delta)^s_h u_h^{k-1/2} , \cI_h[ u_h^{k-1/2} \times v_h] \big)_h
\end{equation}
where the discrete inner product is consistent with the norm $\|\cdot\|_{L^2_h(\O)}$ 
and given by
\[
(y_h,v_h)_h = \int_\O \cI_h[y_h \cdot v_h] \dv{x} 
= \sum_{z\in \cN_h} \b_z y_h(z) \cdot v_h(z),
\]
and $y_h = (-\Delta)_h^s w_h$ is for given $w_h\in \cS^1_0(\cT_h)^3$ 
the uniquely defined function $y_h\in \cS^1_0(\cT_h)^3$  with 
\[
(y_h,v_h)_h = \big((-\Delta)^{\frac{s}{2}} w_h, (-\Delta)^{\frac{s}{2}} v_h\big)
\]
for all $v_h\in \cS^1_0(\cT_h)^3$. Choosing the test function
$v_h = u_h^{k-1/2}(z) \vphi_z$ with the hat function $\vphi_z \in \cS^1_0(\cT_h)$ 
associated with a node $z\in \cN_h$ in~\eqref{eq:spin_system_discr}
then leads to the discrete constraint preservation property
\[
\b_z d_t |u_h^k(z)|^2 = 0.
\]
Analogously, by choosing $v_h = (-\Delta)_h^s u_h^{k-1/2}$ we find with the
definition of $(-\Delta)_h^s$ that
\[\begin{split}
d_t \frac12 \big\|(-\Delta)^{\frac{s}{2}} u_h^k\big\|_h^2 
&= \big((-\Delta)^{\frac{s}{2}} d_t u_h^k, (-\Delta)^{\frac{s}{2}} u_h^{k-1/2}\big)  \\
&= \big(d_t u_h^k, (-\Delta)_h^s u_h^{k-1/2}\big)_h = 0,
\end{split}\]
i.e., the preservation of the discrete fractional Dirichlet energy. 
The scheme~\eqref{eq:spin_system_discr} requires the iterative solution of a nonlinear
system of equations in every time step. A simple fixed-point iteration
is constraint preserving and
convergent provided that the step-size condition $\tau= O(h^{2s})$ is satisfied. 

\subsection*{Outline}
The remainder of the paper is organized as follows: In section~\ref{s:not}, we introduce 
definitions and notation related to fractional Sobolev spaces. Section~\ref{s:compact} is 
devoted to the weak compactness result for fractional harmonic maps defined via
the integral representation of the fractional Laplace operator;  
a corresponding result for a spectral version of the fractional Laplacian is provided in 
Appendix~\ref{app:spec}. Required finite element spaces are defined  
in section~\ref{s:fem} along with a general application of the the continuous compactness 
result to the discrete setting. Section~\ref{s:nconv} specifies the three numerical algorithms 
described above and provides corresponding stability and convergence results. 
In section~\ref{s:numexp}, we provide numerical experiments which
illustrate the good approximation properties of our numerical schemes.  

\section{Fractional Sobolev spaces}\label{s:not}

Without any specific mention, we use 
$(\cdot,\cdot)$ to denote the $L^2$-scalar product and $\|\cdot\|$ the $L^2$-norm.
The scalar product is typically defined over $\O$ or $\R^d$ with an appropriate interpretation
in case of the fractional Laplacian. For a Banach space $X$, 
we denote its topological dual by $X'$ and the pairing between 
$X'$ and $X$ by $\langle \cdot,\cdot \rangle_{X',X}$. Moreover, we use  
$\rightarrow $ and $\rightharpoonup$ to indicate strong and weak convergence, respectively. 
We occasionally denote a relation $A \le C B$, with $C$ being a non-essential constant, by $A \aleq B$. 
The set $B_\rho$ denotes the open ball of radius $\rho$ centered at $0$.
  
To define the fractional Laplace operator we consider the weighted Lebesgue space
\[
\mathbb{L}^1_s(\R^d) = 
\Big\{ f : \R^d \rightarrow \R \mbox{ measurable }, \int_{\R^d} \frac{|f(x)|}{(1+|x|)^{d+2s}} \dv{x} < \infty \Big\} 
\]
and first define for $f \in \mathbb{L}^1_s(\R^d)$, $\varepsilon > 0$, and $x\in \R^d$ the quantity
\[
(-\Delta)^s_\varepsilon f (x) = C_{d,s} \int_{\{ y \in \R^d , |y-x| > \varepsilon \}} \frac{f(x)-f(y)}{|x-y|^{d+2s}} \dv{y} 
\]
where the constant $C_{d,s} = \big(s2^{2s}\Gamma\left(\frac{2s+d}{2}\right)\big)/\big(\pi^{\frac{d}{2}}\Gamma(1-s)\big)$	
is obtained with Euler's Gamma function. 
We then define the  {\em integral version} of the fractional Laplace operator for $s \in (0,1)$
via a limit passage for $\veps \to 0$, i.e., 
\begin{equation}\label{eq:integ}
(-\Delta)^s f(x) = C_{d,s} \mbox{P.V. } \int_{\mathbb{R}^d}  \frac{f(x)-f(y)}{|x-y|^{d+2s}} \dv{y} 
	       = \, \lim_{\varepsilon \to 0} (-\Delta)^s_{\varepsilon} f(x) ,
\end{equation}
where P.V. indicates the Cauchy principal value. Note that this definition for the full space $\R^d$ 
coincides with the spectral definition of the fractional Laplacian obtained using Fourier transform 
\cite[Proposition~3.4]{MR2944369}, see also \cite{MR2354493}. Such an equivalence 
also holds in case of periodic boundary conditions \cite[Eq.~(2.53)]{MR3967804}. 

\begin{remark}
If we replace the integration domain $\R^d$ in \eqref{eq:integ} by an open set $\Om$ we obtain
the so-called \emph{regional fractional Laplacian}. All arguments given in this paper can be adapted
to that setting 
by minor modifications provided the boundary conditions are meaningful, e.g., in the case $s > 1/2$.
\end{remark}

Based on the definition of the operator $(-\Delta)^s$ we introduce
fractional order Sobolev spaces $H^{s}(\R^d)$ for $s \in (0,1)$ by setting 
\begin{align*}
[f]_{H^{s}(\R^d)}   &= \| (-\Delta)^{\frac{s}{2}} f\|_{L^2(\R^d)} = \left( \int_{\R^d} \int_{\R^d} \frac{|f(x)-f(y)|^2}{|x-y|^{d+2s}} \dv{y} \dv{x} \right)^{\frac12},\\
\|f\|_{H^{s}(\R^d)} &= \|f\|_{L^2(\R^d)} + [f]_{H^{s,2}(\R^d)} .
\end{align*}
Then the Sobolev space $H^{s}(\R^d)$ is defined as 
\[
H^{s}(\R^d) = \left\{  f \in L^2(\R^d) \, : \, \|f\|_{H^{s}(\R^d)} < +\infty \right\}
\]
which is a Hilbert space. The set of vectorial functions $f : \R^d \rightarrow \R^N$ whose components
belong to $H^{s}(\R^d)$ is denoted by $H^{s}(\R^d;\R^N)$. We will omit dependence on 
$N$ while writing corresponding norms when it is clear from the context. 

For bounded open sets $\O\subset \R^d$ and parameters $s \in (0,1)$ we define
Sobolev spaces $\widetilde{H}^{s}(\Om)$ by considering trivial extensions to $\R^d$, i.e.,
we set  
\[ 
\widetilde{H}^{s}(\Om) = \{f \in L^2(\R^d) \, : \, (-\Delta)^{\frac{s}{2}} f \in L^2(\R^d) , 
\quad f \equiv 0 \mbox{ in } \R^d \setminus \Om  \} .
\]
We recall the following density result for $s \in (0,1]$ and $\Om \subset \R^d$ for domains with 
Lipschitz boundary \cite{MR3310082}
\[
\widetilde{H}^{s}(\Om) = \overline{\mathcal{D}(\Om)}^{\|\cdot\|_{\widetilde{H}^{s}(\Om)}}
\]
and by a Poincar\'e type inequality, which is a consequence of H\"older's inequality and Sobolev imbedding theorem, \cite[Theorem 3.1.4.]{AH96}, a norm on $\widetilde{H}^s(\Omega;\R^N)$ is given by
\[
\|f\|_{\widetilde{H}^s(\Omega)} = \| (-\Delta)^{\frac{s}{2}} f\|_{L^2(\R^d)} .
\]
We refer the reader to \cite[Theorem~7.1]{MR2944369} for boundedness and compactness results
of embeddings of $\tH^s(\O)$ into $L^2(\O)$. 
Following~\cite{SDipierro_XRosOton_EValdinoci_2017a}, an integration-by-parts formula
can be established for the fractional Laplace operator, i.e., for all 
$f,g \in \widetilde{H}^s(\Omega)$ we have 
\begin{equation}\label{eq:34593846}\begin{split}
\left( (-\Delta)^{\frac{s}{2}} f, (-\Delta)^{\frac{s}{2}}g \right) 
 &= \frac{C_{d,s}}{2} \int_{\R^d} \int_{\R^d} \frac{(f(x)-f(y))(g(x)-g(y))}{|x-y|^{d+2s}} \dv{y} \dv{x} \\
 &= \langle (-\Delta)^s f , g \rangle_{\widetilde{H}^s(\O)',\widetilde{H}^s(\O)}.
\end{split}\end{equation}

In the proofs of \cref{s:compact} we will also encounter the Hardy-Littlewood maximal function $\mathcal{M}$. It is defined as 
\[
 \mathcal{M} f(x) = \sup_{r>0} |B_r(x)|^{-1} \int_{B_r(x)} |f(y)| dy.
\]
The maximal theorem, see \cite{Stein93}, states that for $p \in (1,\infty]$, the (sub-linear) operator $\mathcal{M}$ is bounded from $L^p(\R^d)$ to $L^p(\R^d)$. That is, there exists a constant $C > 0$ such that for all $f\in L^p(\R^d)$ we have
\[
 \|\mathcal{M} f\|_{L^p(\R^d)} \leq C\, \|f\|_{L^p(\R^d)}.
\]
The maximal function of a derivative controls the H\"older or Lipschitz-constant of a function. More precisely the following inequality holds true, see \cite{BH93,H96}
\[
 \frac{|f(x)-f(y)|}{|x-y|} \aleq \mathcal{M} |\nabla f|(x)+\mathcal{M} |\nabla f|(y).
\]
A similar estimate holds for the fractional Laplacian, which was shown in \cite[Proposition 6.6.]{S18} (but may have been known before): for $\alpha \in (0,1)$ we have 
\[
 \frac{|f(x)-f(y)|}{|x-y|^\alpha} \aleq \mathcal{M} |\laps{\alpha} f|(x)+\mathcal{M} |\laps{\alpha}f|(y).
\]
Here, by an abuse of notation we will write $\mathcal{M}$ for the square of the maximal function $\mathcal{M} \circ \mathcal{M}$.

\section{Weak compactness for integral fractional Laplacian}\label{s:compact}

The goal of this section is to identify a weak compactness property
for fractional harmonic maps. This result is critical to establish convergence 
of our numerical approximations. In particular, we establish that 
if $\{u_j\}_{j\in \mathbb{N}} \subset {\mathcal{A}}$ such that $u_j \rightharpoonup u$ in 
$\widetilde{H}^s(\Omega;\mathbb{R}^N)$ as $j \rightarrow \infty$ then we have
\begin{equation}\label{eq:conv_nonlin}
\left((-\Delta)^{\frac{s}2} u_j, (-\Delta)^{\frac{s}2} (u_j \times \phi) \right) 
\rightarrow \left((-\Delta)^{\frac{s}2} u, (-\Delta)^{\frac{s}2} (u \times \phi)\right)
\end{equation}
for any $\phi \in C_c^\infty(\Omega;\R^{N'})$, where we recall that  fractional 
harmonic maps fulfill \eqref{eq:frac_hm_cross}. In the classical setting for $s=1$
the result is a direct consequence of the product rule and properties of the
cross produt. 

To generalize the critical limit passage we begin by rewriting the nonlinear
term as follows 
\begin{align*}
\left((-\Delta)^{\frac{s}2} u_j, (-\Delta)^{\frac{s}2} (u_j \times \phi) \right) 
    &= \left((-\Delta)^{\frac{s}2} u_j,  (-\Delta)^{\frac{s}2} u_j \times \phi \right) \\
     &\ +\left((-\Delta)^{\frac{s}2} u_j,   u_j \times (-\Delta)^{\frac{s}2} \phi \right) \\
     &\ +\left((-\Delta)^{\frac{s}2} u_j,  (-\Delta)^{\frac{s}2} (u_j \times \phi) - (-\Delta)^{\frac{s}2}u_j \times \phi-u_j \times (-\Delta)^{\frac{s}2} \phi \right)  \\
     &= \left((-\Delta)^{\frac{s}2} u_j,   u_j \times (-\Delta)^{\frac{s}2} \phi \right) \\
     &\ +\left((-\Delta)^{\frac{s}2} u_j,  (-\Delta)^{\frac{s}2} (u_j \times \phi) - (-\Delta)^{\frac{s}2}u_j \times \phi-u_j \times (-\Delta)^{\frac{s}2} \phi  \right)  \\
     &= a_j + b_j
\end{align*}
where we used that $\left((-\Delta)^{\frac{s}2} u_j,  (-\Delta)^{\frac{s}2} u_j \times \phi \right) = 0$. 
Since $\widetilde{H}^s(\Omega;\mathbb{R}^N) $ is compactly embedded in $L^2(\Omega;\mathbb{R}^N)$
for $s>0$, by a weak-strong limiting argument we conclude that
\[
a_j \rightarrow \left((-\Delta)^{\frac{s}2} u,   u \times (-\Delta)^{\frac{s}2} \phi \right) . 
\]
It thus remains to show that 
\[
b_j \rightarrow \left((-\Delta)^{\frac{s}2} u,  (-\Delta)^{\frac{s}2} (u \times \phi) -u \times (-\Delta)^{\frac{s}2} \phi \right)
\]
to deduce the convergence result~\eqref{eq:conv_nonlin}.

For ease of notation we abbreviate the second argument in the inner products defining
the quantities $b_j$, i.e., we define a bilinear operator $H_s$ via 
\[
H_s(f,g) = (-\Delta)^{\frac{s}2} (fg) -f (-\Delta)^{\frac{s}2} g - ((-\Delta)^{\frac{s}2} f) g.
\]
$H_s$ measures the error term in the fractional Leibniz rule.
Since the fractional Laplace operator is applied componentwise we can extend the 
definition of $H_s$ to products of vector fields. For this, we represent
the linear cross product operation $z \mapsto u \times z$ for $z\in \R^{N'}$ by the
a matrix multiplication $z\mapsto A_u z$ for a suitably defined matrix $A_u$. 
In particular, for a matrix-valued mapping $A(x) \in \R^{N \times N'}$ and a vector-valued
function $v(x) \in \R^N$ we write  
\[
H_s (A,v)(x) = \Big(\sum_{k=1}^{N'} H_s(A^{i,k},v^k)(x)\Big)_{i=1}^N \in \R^N.
\]
With this preparation, the sequence $\{b_j\}_{j\in \N}$ is represented as 
\[
b_j = \big(\laps{s} u_j, H_s(u_j \times , \phi)\big).
\]
The following proposition provides a strong continuity property of 
the operator $H_s$ that implies the main result.

\begin{proposition}\label{pr:strongconv}
Let $s \in (0,1)$.
Let $\Omega \subset \R^d$ be a bounded open set, and assume that the sequence
$\{f_j\}_{j\in \N} \subset L^2(\R^d)$ converges to $f \in L^2(\R^d)$ locally in $\R^d$, that is
\[
\forall \text{$K \subset \R^d$ compact:} \lim_{j \to \infty} \|f_j-f\|_{L^2(K)} = 0
\]
and
\[
 \sup_{j\in \N} \|f_j\|_{L^2(\R^d)} + \|\laps{s} f_j\|_{L^2(\R^d)} < \infty.
\]
Then for every fixed $\varphi \in C_c^\infty(\Omega)$
\[
\|H_s(f_j,\varphi) - H_s(f,\varphi)\|_{L^2(\R^d)} \xrightarrow{j \to \infty} 0.
\]
\end{proposition}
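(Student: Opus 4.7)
The plan is to use bilinearity of $H_s$ in the first argument and reduce the claim to showing $\|H_s(g_j,\varphi)\|_{L^2(\R^d)}\to 0$, where $g_j := f_j - f$ satisfies $g_j \to 0$ locally in $L^2$ and stays uniformly bounded in $H^s(\R^d)$. The starting point is the absolutely convergent representation
\[
H_s(g,\varphi)(x) = -c_{d,s} \int_{\R^d} \frac{(g(x)-g(y))(\varphi(x)-\varphi(y))}{|x-y|^{d+s}}\,dy,
\]
which follows by direct manipulation of the definition together with the smoothness of $\varphi$. The proof then combines a far-field truncation with a commutator-type continuity estimate.

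First I would fix $R>0$ with $\supp \varphi \subset B_R$ and pick $\eta_M \in C_c^\infty(\R^d)$ with $\eta_M \equiv 1$ on $B_M$ and $\supp \eta_M \subset B_{2M}$, for $M \gg R$ to be chosen later, and decompose $g_j = \eta_M g_j + (1-\eta_M) g_j$. For the far part $h:=(1-\eta_M)g_j$ the supports of $h$ and $\varphi$ are disjoint, so $h\varphi \equiv 0$ and the representation collapses to
\[
H_s(h,\varphi)(x) = c_{d,s}\Big( h(x)\int \frac{\varphi(y)}{|x-y|^{d+s}}\,dy + \varphi(x) \int \frac{h(y)}{|x-y|^{d+s}}\,dy\Big),
\]
with $|x-y|\gtrsim M$ throughout the relevant supports. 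A direct Cauchy--Schwarz estimate in $y$ then gives
\[
\|H_s((1-\eta_M)g_j,\varphi)\|_{L^2(\R^d)} \aleq C_\varphi\, M^{-(d/2+s)}\, \sup_j \|g_j\|_{L^2(\R^d)},
\]
which tends to $0$ as $M\to \infty$.

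For the near part $h_j := \eta_M g_j$, I would establish the interpolation-type continuity estimate
\[
\|H_s(h,\varphi)\|_{L^2(\R^d)} \aleq C_\varphi\Big( \delta\, \|\laps{s} h\|_{L^2(\R^d)} + \|h\|_{L^2(\R^d)}\Big) \quad \text{for every } \delta \in (0,1),
\]
by splitting the integral representation of $H_s(h,\varphi)(x)$ into the three ranges $|x-y|\le \delta$, $\delta<|x-y|\le 1$, and $|x-y|>1$. On the innermost piece I would bound $|\varphi(x)-\varphi(y)|\le \|\nabla\varphi\|_\infty |x-y|$ and apply the pointwise maximal-function inequality from Section~\ref{s:not},
\[
|h(x)-h(y)| \aleq |x-y|^s\,\bigl(\mathcal{M}|\laps{s}h|(x)+\mathcal{M}|\laps{s}h|(y)\bigr),
\]
so that the integrand $|x-y|^{1-d}(\mathcal{M}|\laps{s}h|(x)+\mathcal{M}|\laps{s}h|(y))$ integrated over $|x-y|\le\delta$ yields a pointwise bound of order $\delta\,(\mathcal{M}|\laps{s} h|(x) + \mathcal{M}\mathcal{M}|\laps{s}h|(x))$; the maximal theorem converts this to an $L^2$-bound by $\delta\,\|\laps{s}h\|_{L^2}$. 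On the intermediate annulus I would combine $|h(x)-h(y)|\le |h(x)|+|h(y)|$ with the $\delta$-uniform bound $\int_\delta^1 r^{-s}\,dr\le 1/(1-s)$ (using $s<1$), obtaining a pointwise bound of $|h(x)|+\mathcal{M}|h|(x)$, hence an $L^2$-bound by $C\|h\|_{L^2}$. On the outer region I would use that $|\varphi(x)-\varphi(y)|$ is nonzero only when $x$ or $y$ lies in $B_R$, together with Young's inequality applied to the $L^1$ kernel $z\mapsto (1+|z|)^{-(d+s)}$, again arriving at an $L^2$-bound by $C_\varphi\|h\|_{L^2}$.

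To conclude, $\|h_j\|_{H^s(\R^d)}$ remains uniformly bounded since multiplication by $\eta_M \in C_c^\infty$ is continuous on $H^s$, while $\|h_j\|_{L^2(\R^d)} \to 0$ because the $h_j$ are all supported in the fixed ball $B_{2M}$ and $g_j \to 0$ locally in $L^2$. Given $\epsilon>0$, I would first fix $M$ large enough that the far-field contribution is less than $\epsilon/2$; then pick $\delta$ small so that $C_\varphi\,\delta\, \sup_j\|\laps{s} h_j\|_{L^2} < \epsilon/4$; and finally choose $j$ large so that $C_\varphi\|h_j\|_{L^2} < \epsilon/4$. The main obstacle is the continuity estimate in the innermost region: the naive bound would only produce $\|\laps{s}h\|_{L^2}$ with a fixed constant, and extracting the small factor $\delta$ is exactly what the maximal-function inequality from~\cite{S18} enables, allowing one to trade mere boundedness in $H^s$ for strong $L^2$-convergence.
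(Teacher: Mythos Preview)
Your argument is correct and shares its overall architecture with the paper's proof: both reduce to $g_j=f_j-f$, use the integral representation of $H_s$, split off a far-field piece via a cutoff $\eta_M$ (resp.\ $\eta_R$), and rely on the pointwise maximal-function inequality of \cite{S18} for the near piece. The genuine difference lies in how the near-field is closed. The paper estimates
\[
\|H_s(\eta_R g_j,\varphi)\|_{L^2(\R^d)} \aleq C_\varphi\big(\|\eta_R g_j\|_{L^2(\R^d)}+\|\laps{t}(\eta_R g_j)\|_{L^2(\R^d)}\big)
\]
for some $t\in(0,s)$, using Riesz potentials and Sobolev embedding in the analogue of your inner region, and then invokes a separate Rellich-type compactness statement (their \Cref{pr:RellichweakconvOmega}) to force $\|\laps{t}(\eta_R g_j)\|_{L^2(\R^d)}\to 0$. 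You instead keep the top-order quantity $\|\laps{s}h\|_{L^2}$ but attach the free small parameter $\delta$ to it via the extra scale in your three-annuli decomposition, so that local $L^2$-convergence alone (together with compact support of $h_j$) closes the argument. This buys you a more self-contained proof that bypasses the auxiliary compactness proposition; the paper's route, in turn, yields a cleaner single estimate without an additional parameter and packages the compactness step in a reusable lemma. Both are valid; yours is arguably more elementary.
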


\begin{remark}[$s \ge 1$]
\label{rem:sge1}
The result of \Cref{pr:strongconv} directly works for $s \in (0,2)$ with $s < d$. 
Further it can be extended to the case $s \geq 2$ by using that
the classical Laplace operator satisfies a product rule. 
\end{remark}

\begin{proof}
Abbreviating $\tilde{g}_j = f_j-f$ we need to show that for any fixed $\varphi \in C_c^\infty(\Omega)$,
\[
\|H_s(\tilde{g}_j,\varphi)\|_{L^2(\R^d)} \to  0.
\]
Let $\eta_R \in C_c^\infty(B_{2R})$, $\eta \equiv 1$ in $B_R$, where we assume that $R$ is large so
that $B_{R/2} \supset \supp \varphi$. Then
\[
 H_s(\tilde{g}_j,\varphi) = H_s(\eta_R \tilde{g}_j,\varphi) + H_s((1-\eta_R) \tilde{g}_j,\varphi).
\]
Observe that $g_j := \eta_R \tilde{g}_j$ satisfies the same assumptions as $g_j$ and additionally it has compact support.

We are going to show that 
\begin{equation}\label{eq:prop31:etaR}
 \lim_{j \to \infty} \|H_s(\eta_R \tilde{g}_j,\varphi)\|_{L^2(\R^d)} = 0 \quad \forall R > 0
\end{equation}
and
\begin{equation}\label{eq:prop31:1metaR}
 \lim_{R \to \infty} \limsup_{j \to \infty} \|H_s((1-\eta_R) \tilde{g}_j,\varphi)\|_{L^2(\R^d)} = 0
\end{equation}
Together, \eqref{eq:prop31:etaR} and \eqref{eq:prop31:1metaR} imply the claim.

{\em Proof of \eqref{eq:prop31:1metaR}:}
Observe that by disjoint support of $\varphi$ and $(1-\eta_R)$
\begin{equation}\label{eq:prop31:346346}
 H_s((1-\eta_R) \tilde{g}_j,\varphi) = \varphi \laps{s} ((1-\eta_R) \tilde{g}_j) + \laps{s} \varphi ((1-\eta_R) \tilde{g}_j).
\end{equation}
Using the integral representation of $\laps{s}$ we have for any $x \in \supp \varphi$ (and thus $1-\eta_R(x) = 0$), 
\[
\laps{s} \brac{(1-\eta_R) \tilde{g}_j}(x) = c\int_{\R^d} |x-y|^{-s-d}(1-\eta_R(y)) \tilde{g}_j(y) \dv{y}.
\]
Since $|x-y|^{-s-d}$ is smooth and bounded whenever $x \in \supp \varphi$ and $y \in \supp (1-\eta_R)$ we find that 
\[
 \|\laps{s} \brac{(1-\eta_R) \tilde{g}_j}\|_{L^\infty(\supp \varphi)} \aleq C(R) \|\tilde{g}_j\|_{L^2(\R^d)}
\]
and
\[
 \|\nabla \laps{s} \brac{(1-\eta_R) \tilde{g}_j}\|_{L^\infty(\supp \varphi)} \aleq C(R) \|\tilde{g}_j\|_{L^2(\R^d)}.
\]
That is, by the assumptions on $L^2$-boundedness of $\tilde{g}_j$, the Lipschitz norm of $\laps{s} \brac{(1-\eta_R) \tilde{g}_j}$ is uniformly bounded in $\supp \varphi$. On the other hand, by weak convergence we have for almost every $x$
\[
\laps{s} \brac{(1-\eta_R) \tilde{g}_j}(x) \to 0
\]
as $j\to \infty$. 
Since a.e. limits and uniform limits must coincide, we can argue by Arzela-Ascoli, and conclude that 
\[
 \limsup_{j \to \infty} \|\laps{s} \brac{(1-\eta_R) \tilde{g}_j}\|_{L^\infty(\supp \varphi)} = 0.
\]
For the other term in \eqref{eq:prop31:346346} we observe that for $x \in \supp (1-\eta_R)$
\[
 |\laps{s}\varphi(x)| \aleq \int_{\R^d} |x-y|^{-s-d} |\varphi(y)| \dv{y} \leq \dist(x,\supp \varphi)^{-s-d} \|\varphi\|_{L^1(\R^d)}.
\]
By assumption on $R$ we have $\dist(\R^d \backslash B_R,\supp \varphi) \geq \frac{R}{2}$. Thus
\[
\|\laps{s} \varphi ((1-\eta_R) \tilde{g}_j)\|_{L^2(\R^d)} \leq R^{-s-d} \|\varphi\|_{L^1(\R^d)}\, \|\tilde{g}_j\|_{L^2(\R^d)}.
\]
By the $L^2(\R^d)$-boundedness of $(\tilde{g}_j)_{j}$ and since $\varphi \in C_c^\infty(\R^d)$, we conclude that 
\[
 \limsup_{j \to \infty} \|\laps{s} \varphi ((1-\eta_R) \tilde{g}_j)\|_{L^2(\R^d)} \aleq C(\varphi) R^{-s-d}
\]
so that 
\[
 \lim_{R \to \infty} \limsup_{j \to \infty} \|\laps{s} \varphi ((1-\eta_R) \tilde{g}_j)\|_{L^2(\R^d)} =0.
\]

{\em Proof of \eqref{eq:prop31:etaR}:}
It remains to show that for any fixed $R>0$, setting $g_j := \eta_R \tilde{g}_j$
\[
\|H_s(g_j,\varphi)\|_{L^2(\R^d)} \to  0.
\]
Since $s \in (0,1)$, a direct calculation as in~\cite{S11} or \cite{DAP19} yields that 
\[
|H_s (g_j,\varphi)(x)| \aleq \abs{\int_{\R^d} \frac{(g_j(x)-g_j(y))(\varphi(x)-\varphi(y))}{|x-y|^{n+s}}\, \dv{y}},
\]
with a constant that depends on $d$ and $s$. Our strategy is to show that 
for any $t \in (0,1)$, $t > s-1$
\begin{equation}\label{eq:estclaim}
\|H_s(g_j,\varphi)\|_{L^2(\R^d)} 
\aleq \big(\|\varphi\|_{L^\infty(\R^d)} + \|\nabla \varphi\|_{L^\infty(\R^d)}\big)\, 
\big(\|g_j\|_{L^2(\R^d)} + \|\laps{t} g_j\|_{L^2(\R^d)}\big) .
\end{equation}
A compact embedding property $\widetilde{H}^s(\Omega) \to \widetilde{H}^t(\Omega)$
proved in \Cref{pr:RellichweakconvOmega} below then implies the statement. 

It thus remains to prove \eqref{eq:estclaim}. Since $\Omega$ is bounded 
there is $R > 0$ such that $\overline{\Omega} \subset B_{R/2}$, where 
$B_\rho$ denotes the ball of  radius $\rho$ centered at $0$.
We partition $\R^d = (\R^d \setminus B_R) \cup B_R$ and thereby obtain the
estimate
\[
\|H_s (g_j,\varphi)\|_{L^2(\R^d)}^2 \aleq I + II + III + IV,
\]
where 
\[
\begin{split}
 I =& \int_{\R^d \backslash B_R} \abs{\int_{\R^d \backslash B_R} \frac{(g_j(x)-g_j(y))(\varphi(x)-\varphi(y))}{|x-y|^{d+s}} \dv{y}}^2 \dv{x}, \\
 II =& \int_{B_R} \abs{\int_{\R^d \backslash B_R} \frac{(g_j(x)-g_j(y))(\varphi(x)-\varphi(y))}{|x-y|^{d+s}} \dv{y}}^2 \dv{x} \\
   =&\int_{B_{R/2}} \abs{\int_{\R^d \backslash B_R} \frac{g_j(x)\varphi(x)}{|x-y|^{d+s}} \dv{y}}^2  \dv{x}, \\
 III =& \int_{\R^d\backslash B_R} \abs{\int_{B_{R/2}} \frac{g_j(y)\varphi(y)}{|x-y|^{d+s}} \dv{y}}^2  \dv{x}, \\
 IV =& \int_{B_R} \abs{\int_{B_R} \frac{(g_j(x)-g_j(y))(\varphi(x)-\varphi(y))}{|x-y|^{d+s}} \dv{y}}^2 \dv{x} .
\end{split}
 \]
We will show that the terms $I,II,III,IV$ are bounded in such a way that
we can deduce~\eqref{eq:estclaim}.  \\
\emph{Estimate for $I$.} Noting that  $\supp \varphi \cup \supp g_j \subset B_{\frac{1}{2}R}$ 
we find that 
\[ 
I = 0.
\]
\emph{Estimate for $II$.}
Observe that if $x \in B_{R/2}$ and $y \in \R^d \backslash B_R$ then $|x-y| \aeq 1+|y|$, with a constant depending on $R$. Thus
\[
\begin{split}
 II \aleq& 
 \|\varphi\|_{L^\infty(\R^d)}^2 \int_{B_{R/2}} |g_{j}(x)|^2 \abs{\int_{\R^d \backslash B_R} \frac{1}{(1+|y|)^{d+s}} \dv{y} }^2 \dv{x}\\
 & \aleq  \|\varphi\|_{L^\infty(\R^d)}^2\, \|g_j\|_{L^2(\R^d)}^2.
 \end{split}
\]
\emph{Estimate for $III$.} Similarly as for $II$, for $x \in \R^d \backslash B_R$ and $y \in B_{R/2}$ we have $|x-y| \aeq 1+|x|$, and thus
\[
\begin{split}
 III \aleq& \|\varphi\|_{L^\infty(\R^d)}^2\, \int_{\R^d\backslash B_R} \frac{1}{(1+|x|)^{d+s}} \abs{\int_{B_{R/2}} |g_j(y)|\, \dv{y}}^2\, \dv{x}\\
 \aleq& \|\varphi\|_{L^\infty(\R^d)}^2\, \|g_j\|_{L^1(B_{R/2})}^2 \\
 \aleq&\|\varphi\|_{L^\infty(\R^d)}^2\, \|g_j\|_{L^2(\R^d)}^2. \\
 \end{split}
\]
\emph{Estimate for $IV$.}
Recall that we have \cite[Proposition 6.6.]{S18} for any $t \in (0,1)$,
\[
 |g_j(x)-g_j(y)| \aleq |x-y|^{t} \brac{|\mathcal{M}\laps{t} g_j(x)|+|\mathcal{M}\laps{t} g_j(y)|}.
\]
Here $\mathcal{M}$ is a finite power of the Hardy-Littlewood maximal function. 
Then
\[
\begin{split}
 IV \aleq& \|\nabla \varphi\|_{L^\infty(\R^d)}^2 
 \int_{B_R} \brac{\int_{B_R} \brac{|\mathcal{M}\laps{t} g_j(x)|+|\mathcal{M}\laps{t} g_j(y)|}\, \frac{1}{|x-y|^{d+s-1-t}} \dv{y}}^2 \dv{x}\\
 \aleq&\|\nabla \varphi\|_{L^\infty(\R^d)}^2 \int_{B_R} |\mathcal{M}\laps{t} g_j(x)|^2 \brac{\int_{B_R} \, |x-y|^{1+t-s-d}\, \dv{y}}^2 \dv{x}\\
 &+\|\nabla \varphi\|_{L^\infty(\R^d)}^2  \int_{B_R} \brac{\int_{B_R} |\mathcal{M}\laps{t} g_j(y)|\, |x-y|^{1+t-s-d}\, \dv{y}}^2 \dv{x} . 
 \end{split}
 \]
Since $1+t-s > 0$ we have for $|x| \leq R$ that 
\[
 \int_{B_R} \, |x-y|^{1+t-s-d}\dv{y} \leq C(R).
\]
On the other hand recall the Riesz potential $\lapms{\sigma} = (-\Delta)^{-\sigma/2}$ which for $\sigma \in (0,d)$ is defined as
\[
 \lapms{\sigma} f(x) = c_{d,\sigma} \int_{\R^d} |x-y|^{\sigma-d} f(y) \dv{y}.  
\]
Then, 
\[
\begin{split}
\int_{B_R} \brac{\int_{B_R} |\mathcal{M}\laps{t} g_j(y)|\, |x-y|^{1+t-s-d}\, \dv{y}}^2 \dv{x} 
 \aleq&  \big\|\lapms{1+t-s} \brac{\chi_{B_R} \mathcal{M}\laps{t} g_j} \big\|_{L^2(B_R)}^2\\
 \aleq& \big \|\lapms{1+t-s} \brac{\chi_{B_R} \mathcal{M}\laps{t} g_j} \big\|_{L^p(\R^d)}^2 
 \end{split}
\]
for any $p \in [2,\infty)$. Observe that $\chi_{B_R} \mathcal{M}\laps{t} g_j \in L^q(\R^d)$ for any $q \in [1,2]$. Indeed, by H\"older's inequality and maximal theorem, cf.~\cite{Stein93},
\[
 \|\chi_{B_R} \mathcal{M}\laps{t} g_j\|_{L^q(\R^d)} \aleq C(R) \|\mathcal{M}\laps{t} g_j\|_{L^2(\R^d)} \aleq \|\laps{t} g_j\|_{L^2(\R^d)}.
\]
Since $1+t-s \in (0,d)$ there are $p \in [2,\infty)$ and $q \in [1,2]$ with
\[
 1+t-s - \frac{d}{q} = -\frac{d}{p}.
\]
For such $p$ and $q$, by Sobolev embedding the operator $\lapms{1+t-s}: L^q(\R^d) \to L^p(\R^d)$ 
is bounded. Consequently, for that choice of $p$ and $q$ we have
\[
\begin{split}
 \left \|\lapms{1+t-s} \brac{\chi_{B_R} \mathcal{M}\laps{t} g_j} \right \|_{L^p(\R^d)} \aleq & \|\mathcal{M}\laps{t} g_j\|_{L^q(B_R)}\\
 \aleq &\|\laps{t} g_j\|_{L^2(B_R)} \aleq \|\laps{t} g_j\|_{L^2(\R^d)}.
 \end{split}
\]
On combining previous estimates we find that 
\[
 IV \aleq \|\nabla \varphi\|_{L^\infty(\R^d)}^2\, \|\laps{t} g_j\|_{L^2(\R^d)}^2.
\]
The estimates for $I,II,III,IV$ imply \eqref{eq:estclaim}. 
\end{proof}

The following embedding result is used in the proof of \Cref{pr:strongconv}.

\begin{proposition}\label{pr:RellichweakconvOmega}
Let $s \in (0,1)$ and let $\Omega \subset \R^d$ be a bounded open set.
Assume $\{g_j\}_{j\in \N} \subset L^2(\R^d;\R^N)$ strongly converges to $0$ with
$\supp g_j  \subset \overline{\Omega}$ and 
\begin{equation}\label{eq:prrellich:boundedness}
 \sup_{j\in \N} \|\laps{s} g_j\|_{L^2(\R^d)} < \infty.
\end{equation}
Then for any $t \in (0,s)$
\begin{equation}\label{eq:estclaim2}
 \lim_{j \to \infty} \|\laps{t} g_j\|_{L^2(\R^d)} =0.
\end{equation}
\end{proposition}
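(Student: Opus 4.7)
The plan is to reduce the claim to the standard interpolation inequality
\begin{equation*}
\|\laps{t} g\|_{L^2(\R^d)} \leq \|g\|_{L^2(\R^d)}^{1-t/s}\, \|\laps{s} g\|_{L^2(\R^d)}^{t/s}, \qquad 0 < t < s,
\end{equation*}
applied to $g = g_j$. By hypothesis $\|g_j\|_{L^2(\R^d)} \to 0$, while $\sup_{j} \|\laps{s} g_j\|_{L^2(\R^d)} < \infty$ by \eqref{eq:prrellich:boundedness}; hence the right-hand side tends to zero, which is precisely \eqref{eq:estclaim2}. Note that the compact support of $g_j$ in $\overline{\Omega}$ is used only to ensure that the strong $L^2$-convergence is naturally global on $\R^d$; no further localization is required.

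For the interpolation inequality I would invoke the equivalence of the integral and spectral (Fourier) definitions of the fractional Laplacian on $\R^d$ noted right after \eqref{eq:integ}. By Plancherel, $\|\laps{\alpha} g\|_{L^2(\R^d)}^2 = \int_{\R^d} |\xi|^{2\alpha}\, |\widehat{g}(\xi)|^2 \dv{\xi}$ for every $\alpha \geq 0$. Writing $|\xi|^{2t}|\widehat{g}(\xi)|^2 = \bigl(|\xi|^{2s}|\widehat{g}(\xi)|^2\bigr)^{t/s}\, \bigl(|\widehat{g}(\xi)|^2\bigr)^{(s-t)/s}$ and applying H\"older's inequality with conjugate exponents $s/t$ and $s/(s-t)$ gives the desired bound at once. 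If one prefers to avoid the Fourier side, the same estimate is available directly from the Gagliardo representation of $[g_j]_{H^t(\R^d)}^2$: split the double integral at a radius $r>0$, use $|x-y|^{-d-2t} \leq r^{2(s-t)} |x-y|^{-d-2s}$ on $\{|x-y|<r\}$ to produce a contribution $\aleq r^{2(s-t)} [g_j]_{H^s(\R^d)}^2$, and on $\{|x-y|>r\}$ expand $|g_j(x)-g_j(y)|^2 \leq 2|g_j(x)|^2+2|g_j(y)|^2$ together with $\int_{|y|>r}|y|^{-d-2t}\dv{y} \aleq r^{-2t}$ to produce a contribution $\aleq r^{-2t} \|g_j\|_{L^2(\R^d)}^2$; optimizing in $r$ reproduces the interpolation inequality.

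There is essentially no substantive obstacle: the statement is precisely the interpolation of the $H^t$-norm between $L^2(\R^d)$ and $\dot H^s(\R^d)$, and both endpoints of the interpolation are delivered directly by the hypotheses of the proposition. The only detail worth highlighting is that the whole argument is set on the full space $\R^d$, which is exactly the context of \Cref{pr:RellichweakconvOmega} since the $g_j$ are globally defined and $\laps{s}$ is the integral fractional Laplacian; no boundary conditions or extension arguments enter.
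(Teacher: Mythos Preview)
Your argument is correct and, in fact, much more direct than the paper's own proof. The interpolation inequality $\|\laps{t} g\|_{L^2(\R^d)} \le \|g\|_{L^2(\R^d)}^{1-t/s}\,\|\laps{s} g\|_{L^2(\R^d)}^{t/s}$ follows immediately from Plancherel and H\"older exactly as you wrote, and together with the hypotheses it gives \eqref{eq:estclaim2} for the whole sequence in one line. Your Gagliardo-seminorm alternative (split at $|x-y|=r$, optimize in $r$) is equally valid; the $t$-dependent constant $C_{d,t}$ relating the Gagliardo seminorm to $\|\laps{t}\cdot\|_{L^2}$ is harmless since $t$ is fixed.

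The paper takes a considerably longer route: it first argues by subsequences that $g_j \rightharpoonup 0$ in $W^{s,2}(\R^d)$, then splits $\R^d$ into a far region $\Omega_{o,\eps}=\{\dist(\cdot,\Omega)>\eps\}$ and a near region $\Omega_{i,\eps}$. On $\Omega_{o,\eps}$ it uses the pointwise kernel representation of $\laps{t} g_j$ together with $\supp g_j\subset\overline{\Omega}$ to get decay controlled by $\|g_j\|_{L^2}$; on the bounded set $\Omega_{i,\eps}$ it invokes a Rellich-type compactness result from \cite{ARS20} applied to $f_j=\laps{t} g_j$. A final subsequence argument closes the proof. This localization strategy is in keeping with the integral-operator spirit of the section and reuses the compactness lemma from \cite{ARS20}, but for the stated proposition it is unnecessary: since the norms involved live on all of $\R^d$ and the Fourier characterization is available (as the paper itself notes after \eqref{eq:integ}), your interpolation argument settles the matter without any localization, compact-support hypothesis, or subsequence extraction.
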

\begin{proof}
It suffices to show that there is a subsequence that satisfies \eqref{eq:estclaim2}, since 
then any cluster point of $(\|\laps{t} g_j\|_{L^2(\R^d)})_{j \in \N}$ is zero, and
thus the whole sequence converges. 
First we observe that $g_j$ weakly converges to $0$ in $W^{s,2}(\R^d)$. Indeed, 
by assumption
\begin{equation}\label{eq:supjgjlp2}
 \sup_{j} \brac{\|g_j\|_{L^2(\R^d)} + \|\laps{s} g_j\|_{L^2(\R^d)} } < \infty.
\end{equation}
Thus up to taking a subsequence $g_j$ converges to some $g \in W^{s,2}(\R^d)$ with $g \equiv 0$ in $\R^d \backslash \Omega$. 
Since $g_j$ converges to zero strongly in $L^2$, we know that $g$ vanishes identically. 

Noting $t < s$ we can use Sobolev embedding 
and have for some $p>2$ (if $s < d/2$ we can take $p = 2d/(d-2s)$, otherwise any $p > 2$ is permitted) 
from \eqref{eq:supjgjlp2} 
\begin{equation}\label{eq:supjgjlp}
 \sup_{j} \brac{\|\laps{t} g_j\|_{L^2(\R^d)} + \|\laps{t} g_j\|_{L^p(\R^d)}} 
\aleq \sup_{j} \brac{\|g_j\|_{L^2(\R^d)} + \|\laps{s} g_j\|_{L^2(\R^d)} }. 
\end{equation}
As in \cite{ARS20} we split $\R^d$ into two sets. For $\eps > 0$ we define
\[
 \Omega_{o,\eps} = \{x \in \R^d: \dist(x,\Omega) > \eps\}, \quad 
 \Omega_{i,\eps} = \R^d \backslash \Omega_{o,\eps}.
\]
\emph{Estimate on $\Omega_{o,\eps}$.}
Since $\supp g \subset \overline{\Omega}$ and $\dist(\Omega,\Omega_{o,\eps}) \ageq \eps$ we have for $x \in \Omega_{o,\eps}$
\[
 \laps{t} g_j(x) = \int_{\Omega} g_j(y) |x-y|^{-d-t}\dv{y} \aleq (1+|x|)^{-d-t} \int_{\Omega} |g_j(y)|\dv{y}.
\]
Using H\"older's inequality we then find 
\begin{equation}\label{eq:oomegaset}
{\begin{split}
&\|\laps{t} g_j \|_{L^2(\Omega_{o,\eps})}
\aleq& C(\eps) \|g_j\|_{L^2(\R^d)} \to  0.
\end{split}}
\end{equation}
\emph{Estimate on $\Omega_{i,\eps}$.}
Observe that if we set $f_j := \laps{t} g_j$ we have from \eqref{eq:supjgjlp2} and \eqref{eq:supjgjlp}
\[
 \sup_{j\in \N} \brac{\|\laps{s-t} f_j \|_{L^2(\R^d)} + \|f_j\|_{L^2(\R^d)}} < \infty.
\]
From \cite[Proposition 3.2]{ARS20} we obtain that $f_j$ converges strongly to some $f$ in $L^2(\Omega_{i,\eps})$ 
(since $\Omega_{i,\eps}$ is bounded). Since on the other hand $f_j$ weakly converges to zero we have
$f \equiv 0$ and thus
\begin{equation}\label{eq:iomegaset}
 \lim_{j \to \infty} \|\laps{t} g_j\|_{L^2(\Omega_{i,\eps})} = \lim_{j \to \infty} \|f_j\|_{L^2(\Omega_{i,\eps})} = 0.
 \end{equation}
On combining \eqref{eq:oomegaset}, \eqref{eq:iomegaset} we infer \eqref{eq:estclaim2}, which
completes the proof. 
\end{proof}
 
Remark~\ref{rem:sge1}, also directly applies to Proposition~\ref{pr:RellichweakconvOmega}.
The propositions imply the main result of this section. 

\begin{theorem}[Weak compactness]\label{thm:compact1}
Let $\{u_j\}_{j\in \mathbb{N}} \subset L^2(\R^d;\R^N)$ be a sequence such that 
\begin{equation}\label{eq:cond}
\sup_{j\in \N} \|u_j\|_{L^2(\R^d)} + \|(-\Delta)^{\frac{s}{2}} u_j\|_{L^2(\Omega)} < \infty
\end{equation}
$|u_j(x)|^2 \to 1$ as $j\to \infty$ for almost every $x\in \O$.
For every accumulation point $u\in L^2(\R^d;\R^N)$ we have $|u(x)|^2=1$ for 
almost every $x\in \O$. Moroever if $u_j \wto u$ in $L^2(\R^d,\R^N)$ as $j\to \infty$
then 
\[
\big((-\Delta)^{\frac{s}{2}} u_j ,(-\Delta)^{\frac{s}{2}} (u_j \times \phi)\big)
\to \big((-\Delta)^{\frac{s}{2}} u ,(-\Delta)^{\frac{s}{2}} (u \times \phi)\big)
\]
for every $\phi \in C^\infty_c(\O;\R^{N'})$. If $u_j \in \vec{N} + \widetilde{H}^s(\O;\R^N)$
for all $j\in \N$ then also $ u \in \vec{N} + \widetilde{H}^s(\O;\R^N)$.
\end{theorem}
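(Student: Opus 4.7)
The plan is to prove the three assertions separately, exploiting the decomposition already derived in the discussion preceding \Cref{pr:strongconv}:
\[
\bigl((-\Delta)^{\frac{s}{2}} u_j,(-\Delta)^{\frac{s}{2}} (u_j \times \phi)\bigr) = a_j + b_j,
\]
where $a_j = \bigl((-\Delta)^{\frac{s}{2}} u_j,\,u_j \times (-\Delta)^{\frac{s}{2}}\phi\bigr)$ and $b_j = \bigl((-\Delta)^{\frac{s}{2}} u_j,\,H_s(u_j\times,\phi)\bigr)$; the pointwise identity $w\cdot(w\times z)=0$ kills the term containing $(-\Delta)^{\frac{s}{2}}u_j\times\phi$. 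Since the analogous decomposition applies to the limit $u$, convergence of $a_j$ and $b_j$ yields the full nonlinear convergence.

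Before turning to $a_j$ and $b_j$, I would extract additional compactness from the hypotheses. The uniform bound on $\|u_j\|_{L^2(\R^d)}+\|(-\Delta)^{\frac{s}{2}}u_j\|_{L^2}$ places $\{u_j\}$ in a bounded subset of $H^s(\R^d)$, so by Rellich--Kondrachov on balls and a diagonal extraction there is a subsequence that converges to $u$ strongly in $L^2_{\mathrm{loc}}(\R^d)$ and pointwise a.e.\ in $\R^d$. Combined with $|u_j(x)|^2\to 1$ a.e.\ in $\Omega$, this forces $|u(x)|^2=1$ for a.e. $x\in\Omega$; since the weak $L^2$-limit is unique, the identity holds without any further relabelling of the sequence.

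For $a_j$, my strategy is to pair the weak $L^2(\R^d)$-convergence of $(-\Delta)^{\frac{s}{2}}u_j$ with strong $L^2(\R^d)$-convergence of the factor $u_j\times(-\Delta)^{\frac{s}{2}}\phi$. Because $\phi\in C_c^\infty(\Omega)$, the function $(-\Delta)^{\frac{s}{2}}\phi$ is smooth, bounded, and satisfies $|(-\Delta)^{\frac{s}{2}}\phi(x)|\aleq (1+|x|)^{-d-s}$ by its integral representation. Splitting $\R^d = B_R\cup(\R^d\setminus B_R)$, the contribution on $B_R$ is handled by local strong $L^2$-convergence of $u_j$, while the tail on the complement is dominated by $\sup_j\|u_j\|_{L^2}\cdot\|(-\Delta)^{\frac{s}{2}}\phi\|_{L^\infty(\R^d\setminus B_R)}$, which can be made arbitrarily small for $R$ large. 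For $b_j$, \Cref{pr:strongconv} applied componentwise to the scalar entries $H_s(u_j^i,\phi^k)$ in the matrix realization of the cross product yields $H_s(u_j\times,\phi)\to H_s(u\times,\phi)$ strongly in $L^2(\R^d)$, and pairing with the weak convergence of $(-\Delta)^{\frac{s}{2}}u_j$ closes the argument.

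For the Dirichlet trace, if $u_j-\vec{N}\in\widetilde{H}^s(\Omega;\R^N)$ then $u_j\equiv\vec{N}$ a.e.\ in $\R^d\setminus\Omega$, a property preserved by weak $L^2(\R^d)$-convergence, and the uniform bound on $\|u_j-\vec{N}\|_{\widetilde{H}^s(\Omega)}$ together with weak sequential closedness of the Hilbert space $\widetilde{H}^s(\Omega;\R^N)$ then yields $u-\vec{N}\in\widetilde{H}^s(\Omega;\R^N)$. The conceptual obstacle is entirely absorbed into \Cref{pr:strongconv}, which handles the fractional Leibniz defect; once that proposition is in hand, the remaining work reduces to bookkeeping of weak versus strong convergence and to exploiting the decay of $(-\Delta)^{\frac{s}{2}}\phi$ at infinity.
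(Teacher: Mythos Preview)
Your argument is correct and mirrors the paper's own proof: both rely on the $a_j+b_j$ decomposition set up earlier in the section, feed local strong $L^2$-convergence (from Rellich) into \Cref{pr:strongconv} for the $b_j$ term, and use a weak--strong pairing for $a_j$. The only bookkeeping you leave implicit is the passage from the subsequence you extract to the full sequence; the paper handles this by applying the sub-subsequence trick directly to the scalar sequence $\Gamma_j=a_j+b_j$, and equivalently you could note at the outset that $u_j\to u$ in $L^2_{\mathrm{loc}}(\R^d)$ holds for the whole sequence since every subsequence has a further subsequence with that same limit.
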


\begin{proof}
Set 
\[
 \Gamma_j := \big((-\Delta)^{\frac{s}{2}} u_j ,(-\Delta)^{\frac{s}{2}} (u_j \times \phi)\big)
\]
and 
\[
 \Gamma := \big((-\Delta)^{\frac{s}{2}} u ,(-\Delta)^{\frac{s}{2}} (u \times \phi)\big).
\]
Since $u_j$ is by assumption uniformly bounded in $H^{s}(\R^d)$, by Rellich's theorem, up to taking a subsequence $u_{j_k}$ converges strongly to $u$ in $L^2(K)$ for any compact set $K$. 

Splitting as described in the beginning of this section $\Gamma_j$ into $a_j$ and $b_j$, we obtain from \Cref{pr:strongconv} that a subsequence of $b_{j_k}$ converges to $b$.

That is, we have
\[
 \Gamma_{j_k} \to \Gamma.
\]
We can make this argument for any subsequence of $(\Gamma_j)_{j \in \N}$ and obtain a subsubsequence which converges to $\Gamma$. This implies that any cluster point of $(\Gamma_j)_{j \in \N}$ must actually be $\Gamma$, which implies that $\Gamma$ is indeed the limit of the whole sequence $\Gamma_j$.
\end{proof}

\begin{remark}
The assumed uniform bound $\|(-\Delta)^{\frac{s}{2}} u_j\|_{L^2(\Omega)}$  
in Theorem~\ref{thm:compact1} can equivalently be replaced
by a bound for $\|(-\Delta)^{\frac{s}{2}} u_j\|_{L^2(\R^d)}$. 
\end{remark}

\section{Finite element setting}\label{s:fem}

We consider sequences of uniformly shape regular and conforming triangulations $\{\cT_h\}_{h>0}$
of the bounded polyhedral Lipschitz domain $\O\subset \R^d$ consisting of triangles or tetrahedra;
the parameter $h>0$ represents a maximal mesh-size. 
The space of continuous, piecewise affine finite element functions is defined via 
\[
\cS^1(\cT_h) =
\{v_h \in C(\overline{\O}): v_h|_T \in P_1(T) \text{ for all }T\in \cT_h\}.
\]
We let $\cN_h$ be the set of vertices of elements, which are the nodes of the finite
element space. The set $\{\vphi_z:z\in \widetilde\cN_h\}$ is the nodal basis
consisting of hat functions $\vphi_z\in \cS^1(\cT_h)$ associated with vertices 
$z\in \cN_h$. The corresponding nodal interpolation operator
$\cI_h: C(\overline{\O})\to \cS^1(\cT_h)$ is given by
\[
\cI_h v = \sum_{z\in \cN_h} v(z) \vphi_z.
\]
We note the classical nodal interpolation estimates 
\[
h_T^{-1} \|(v-\cI_hv)\|_{L^2(T)} + \|\nabla (v-\cI_hv)\|_{L^2(T)}
\aleq  h_T \|D^2 v\|_{L^2(T)}
\]
for $v \in H^2(T)$ with the diameter $h_T>0$ of an element
$T\in \cT_h$ and a constant $c>0$ that is independent of $h>0$.
We remark that the discrete $L^p$ norms defined via
\[
\|v\|_{L^p_h(\O)}^p = \int_\O \cI_h |v|^p \dv{x} = \sum_{z\in \cN_h} \b_z |v(z)|^p,
\quad \b_z = \int_\O \vphi_z\dv{x}, 
\]
for $v\in C(\overline{\O})$ are equivalent to $L^p$ norms on the
space $\cS^1(\cT_h)$. 
If the triangulations are quasiuniform then for 
given $u_h\in \cS^1(\cT_h)^N$ and $\phi \in C^\infty_c(\O;\R^{N'})$
we have for $0< s\le 1$ that 
\begin{equation}\label{eq:aux_interpol}
\begin{split}
\|(-\Delta)^{\frac{s}{2}}(u_h &\times \phi - \cI_h [u_h\times \phi])\| \\
&\aleq\|\nabla (u_h \times \phi - \cI_h [u_h\times \phi]\| \\
&\aleq h \big(\|u_h\| \|D^2 \phi\|_{L^\infty(\O)} + \|\nabla u_h\| \|\nabla \phi\|_{L^\infty(\O)}\big) \\
&\aleq h \|\phi\|_{W^{2,\infty}(\O)} \big(\|u_h\| + h^{s-1} \|(-\Delta)^{\frac{s}{2}} u_h\|\big) \\
&\aleq c h^s \|\phi\|_{W^{2,\infty}(\O)} \big(\|u_h\| + \|(-\Delta)^{\frac{s}{2}} u_h\|\big).
\end{split}
\end{equation}
Here, we used the inequality 
$\|(-\Delta)^{\frac{s}{2}} v\| \aleq \|\nabla v\|$ for $v\in H^1(\O)$
and the inverse estimate  
\begin{equation}\label{eq:inv_est_a}
\|\nabla v_h\|\aleq h^{s-1} \|(-\Delta)^{\frac{s}{2}} v_h\|
\end{equation}
for $v_h \in \cS^1(\cT_h)^N$, cf., e.g.,~\cite[Prop.~3.1]{JPBorthagaray_PCJr_2019a}.  
Below we also use the inverse estimate, which for quasi-uniform meshes immediately
follows from \cite[Eq.~(3.2)]{borthagaray2020local}, however, a similar expression
can also be derived for just the shape regular meshes following the proof
of \cite[Lemma~5.2]{borthagaray2020local},
\begin{equation}\label{eq:inv_est_b}
\|(-\Delta)^{\frac{s}{2}} v_h\| \aleq  h^{-s} \|v_h\|.
\end{equation}
To impose exterior Dirichlet conditions and to approximate the fractional Laplace operator
we consider a larger domain $\tO\subset \R^d$ with $\overline{\O}\subset \tO$
and a triangulation $\tcT_h$ of $\tO$ that extends $\cT_h$. We then let
$\widetilde{\cI}_h \vec{N} \in \cS^1(\tcT_h)^N$ be the nodal interpolant of 
$\vn\in C^\infty_c(\tO;\R^N)$ on $\tcT_h$. With this we obtain the following discrete variant
of Theorem~\ref{thm:compact1}.

\begin{corollary}[Discrete weak compactness]\label{cor:compact_discrete}
Let $\{u_h\}_{h>0} \subset L^2(\widetilde\O;\R^N)$ be a sequence of finite element
functions $u_h\in \cS^1(\tcT_h)^N$ subordinated to a sequence of quasiuniform
triangulations $\{\cT_h\}_{h>0}$ such that $u_h = \tcI_h \vn$ in $\tO\setminus \O$,
\[
\|u_h\|_{L^2(\O)} + \|(-\Delta)^{\frac{s}{2}} u_h\| \le c
\]
for all $h>0$ and $|u_h(x)|^2 \to 1$ as $h\to 0$ for almost every $x\in \O$.
For every accumulation point $u\in L^2(\widetilde\O;\R^N)$ we have $|u(x)|^2=1$ 
for almost every $x\in \O$ and if $u_{h'} \wto u$ in $L^2(\widetilde\O;\R^N)$
for a subsequence $h' \to 0$ then we have 
\[
\big((-\Delta)^{\frac{s}{2}} u_{h'},(-\Delta)^{\frac{s}{2}} \cI_{h'} [u_{h'} \times \phi]\big)
\to \big((-\Delta)^{\frac{s}{2}} u ,(-\Delta)^{\frac{s}{2}} [u \times \phi]\big)
\]
for every $\phi \in C^\infty_c(\O;\R^{N'})$ as $h' \to 0$. 
\end{corollary}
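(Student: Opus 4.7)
The plan is to reduce \Cref{cor:compact_discrete} to \Cref{thm:compact1} by isolating the nodal interpolation error. For a fixed $\phi \in C^\infty_c(\O;\R^{N'})$, first write
\[
\big((-\Delta)^{\frac{s}{2}} u_{h'}, (-\Delta)^{\frac{s}{2}}\cI_{h'}[u_{h'}\times \phi]\big) = \big((-\Delta)^{\frac{s}{2}} u_{h'}, (-\Delta)^{\frac{s}{2}}(u_{h'}\times \phi)\big) + R_{h'},
\]
with remainder $R_{h'} = \big((-\Delta)^{\frac{s}{2}} u_{h'}, (-\Delta)^{\frac{s}{2}}(\cI_{h'}[u_{h'}\times \phi] - u_{h'}\times \phi)\big)$. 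Cauchy--Schwarz combined with the interpolation estimate \eqref{eq:aux_interpol} and the uniform bound on $\|u_{h'}\| + \|(-\Delta)^{\frac{s}{2}} u_{h'}\|$ then yields $|R_{h'}| \aleq (h')^s \|\phi\|_{W^{2,\infty}(\O)} \to 0$ as $h'\to 0$.

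To treat the first term on the right, I would regard $\{u_{h'}\}$ as a sequence in $L^2(\R^d;\R^N)$ by extending it by $\vec{N}$ on $\R^d \setminus \widetilde\O$; inside $\widetilde\O\setminus\O$ the prescribed value $\tcI_{h'}\vec{N}$ converges almost everywhere and in $H^s(\widetilde\O)$ to $\vec{N}$ by the smoothness of $\vec{N}$. This preserves \eqref{eq:cond} and, together with the assumed a.e. convergence $|u_{h'}|^2 \to 1$ inside $\O$ and the normalization $|\vec{N}|=1$ outside, verifies the hypotheses of \Cref{thm:compact1}. Since $u_{h'}\wto u$ in $L^2(\widetilde\O;\R^N)$, the extended sequence converges weakly in $L^2(\R^d;\R^N)$ to the extension of $u$, and \Cref{thm:compact1} yields $|u|^2 = 1$ a.e. in $\O$, $u \in \vec{N} + \widetilde{H}^s(\O;\R^N)$, as well as
\[
\big((-\Delta)^{\frac{s}{2}} u_{h'}, (-\Delta)^{\frac{s}{2}}(u_{h'}\times \phi)\big) \to \big((-\Delta)^{\frac{s}{2}} u, (-\Delta)^{\frac{s}{2}}(u\times \phi)\big),
\]
which combined with $R_{h'}\to 0$ completes the proof.

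The main obstacle is that the discrete test function is the nodal interpolant $\cI_{h'}[u_{h'}\times \phi]$ rather than the smooth product $u_{h'}\times \phi$, so the continuous compactness result cannot be applied directly. This is overcome by the quantitative estimate \eqref{eq:aux_interpol}, which trades the standard $O(h)$ nodal interpolation error for an $O(h^s)$ error in the $(-\Delta)^{s/2}$-norm by invoking the inverse inequality \eqref{eq:inv_est_a}; quasiuniformity of the meshes is essential here, since otherwise $R_{h'}$ cannot be controlled by the given uniform $H^s$-bound. A secondary subtlety is the pointwise unit-length condition outside $\O$, where the discrete exterior data $\tcI_{h'}\vec{N}$ replace $\vec{N}$, but the regularity of $\vec{N}$ renders this a harmless perturbation.
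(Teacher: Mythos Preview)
Your proposal is correct and follows essentially the same route as the paper: split off the nodal interpolation error via \eqref{eq:aux_interpol} and then invoke \Cref{thm:compact1}. The only cosmetic difference is that the paper applies \Cref{thm:compact1} to the corrected sequence $\tu_h = u_h - \tcI_h\vec{N} + \vec{N}$, which lies exactly in $\vec{N}+\widetilde H^s(\O;\R^N)$, whereas you keep the original $u_{h'}$ and treat the discrepancy $\tcI_{h'}\vec{N}-\vec{N}\to 0$ as a separate perturbation; both are equivalent.
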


\begin{proof}
The result follows from applying Theorem~\ref{thm:compact1} to the corrected sequence
$\{\tu_h\}_{h>0} \subset L^2(\tO;\R^N)$ defined via $\tu_h = u_h - \cI_h \vn + \vn$, 
which satisfies $\tu_h = \vn$ in $\tO\setminus \O$,  
noting that $\cI_h \vn - \vn \to 0$ in $H^1(\O;\R^N)$, and
incorporating the estimate~\eqref{eq:aux_interpol}.
\end{proof}


\section{Numerical schemes and convergence}\label{s:nconv}

In this section we devise numerical schemes for prototypical
problems related to fractional harmonic maps into spheres and
show that they approximate corresponding continuous objects. 
Throughout the following we use the definitions
\[\begin{split}
\cA_h &= \{u_h - \cI_h [\vec{N}] \in  \cS^1_0(\cT_h)^N: |u_h(z)|^2 = 1 
\text{ f.a. }z\in \cN_h\}, \\
\cF_h[u_h] &= \{v_h\in \cS^1_0(\cT_h)^N: v_h(z)\cdot u_h(z) = 0 
\text{ f.a. } z\in \cN_h\}.
\end{split}\]

\subsection{Fractional harmonic maps}\label{s:fhm}
We consider the problem of finding critical points for
the fractional Dirichlet energy subject to a sphere constraint
and Dirichlet exterior conditions in $\R^d\setminus \O$ 
determined by a suitable vector field
$\vec{N}\in C^\infty(\R^d;\R^N)$. We recall that the problem is 
equivalent to determining $u\in \cA$ such that
\begin{equation}\label{eq:hm_crit_cross}
((-\Delta)^{\frac{s}{2}} u,(-\Delta)^{\frac{s}{2}} v) = 0
\end{equation}
for all $v\in C^\infty_c(\O;\R^N))$ with $u\cdot v = 0$ in $\O$. 
A {\em discrete fractional harmonic map} $u_h\in \cA_h$ satisfies the equation
\begin{equation}\label{eq:discr_hm}
((-\Delta)^{\frac{s}{2}} u_h, (-\Delta)^{\frac{s}{2}} v_h) = 0
\end{equation}
for all $v_h\in \cF_h[u_h]$. Bounded sequences of discrete fractional 
harmonic maps weakly accumulate at fractional harmonic maps. 

\begin{proposition}
Let $N\ge 2$ and $\{u_h\}_{h>0} \subset L^2(\O;\R^N)$ be a sequence of discrete 
fractional harmonic maps on a sequence of quasiuniform triangulations 
with $ \|(-\Delta)^{\frac{s}{2}} u_h\| \le c$ 
for all $h>0$. Then every accumulation point $u\in \vec{N} + \tH^s(\O;\R^N)$ satisfies
$u\in \cA$ and~\eqref{eq:hm_crit_cross}. 
\end{proposition}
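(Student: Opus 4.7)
The plan is to pass to a weakly convergent subsequence in $\tH^s(\O;\R^N)$, verify that the pointwise unit-length constraint survives the limit, and then take the limit in the discrete criticality condition~\eqref{eq:discr_hm} by choosing the discrete test field $v_h = \cI_h[u_h \times \phi]$ and invoking the discrete weak compactness \Cref{cor:compact_discrete}. The Poincar\'e-type inequality on $\tH^s$ combined with the assumed bound on $\|(-\Delta)^{s/2} u_h\|$ shows that $u_h - \tcI_h \vec{N}$ is bounded in $\tH^s$; using compactness of $\tH^s \hookrightarrow L^2(\O;\R^N)$ I would extract a subsequence (not relabeled) with $u_h \rightharpoonup u$ in $\vec{N}+\tH^s$ and $u_h \to u$ in $L^2$ and almost everywhere.

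To verify $|u|^2 = 1$ a.e.\ I would use, on each element $T\in \cT_h$ with barycentric coordinates $\l_i$ and vertices $z_i$ and exploiting $|u_h(z_i)|^2 = 1$, the pointwise identity
\[
1 - |u_h(x)|^2 = \tfrac12 \sum_{i,j} \l_i(x)\l_j(x)\,|u_h(z_i) - u_h(z_j)|^2 \;\aleq\; h_T^2\,|\nabla u_h|_T^2.
\]
Integrating, summing over the mesh, and applying the inverse estimate~\eqref{eq:inv_est_a} on the quasiuniform triangulation yields
\[
\big\||u_h|^2 - 1\big\|_{L^1(\O)} \;\aleq\; h^2\,\|\nabla u_h\|^2 \;\aleq\; h^{2s}\,\|(-\Delta)^{s/2} u_h\|^2 \;\longrightarrow\; 0,
\]
so $|u_h|^2 \to 1$ in $L^1(\O)$ and, after a further subsequence, pointwise a.e.; combined with the pointwise convergence of $u_h$ this gives $|u|^2 = 1$ a.e.\ in $\O$, hence $u \in \cA$.

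For each $\phi \in C^\infty_c(\O;\R^{N'})$ I would test~\eqref{eq:discr_hm} with $v_h = \cI_h[u_h \times \phi]$, where the wedge product follows the convention from Section~\ref{sec:intro}. Skew-symmetry gives $v_h(z)\cdot u_h(z) = 0$ at every interior node and the support of $v_h$ stays in a fixed neighborhood of $\supp \phi \subset \O$, so $v_h \in \cF_h[u_h]$ is admissible. The discrete equation then reads
\[
\big((-\Delta)^{s/2} u_h,\,(-\Delta)^{s/2} \cI_h[u_h \times \phi]\big) = 0,
\]
and all hypotheses of \Cref{cor:compact_discrete} are satisfied; passing to the limit gives $\big((-\Delta)^{s/2} u, (-\Delta)^{s/2}[u \times \phi]\big)=0$ for every smooth $\phi$.

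Finally, any smooth $v\in C^\infty_c(\O;\R^N)$ with $u\cdot v = 0$ a.e.\ admits, thanks to $|u|=1$ a.e., the pointwise representation $v = u\times(v\times u)$; approximating the tangential factor $v\times u \in \tH^s$ by smooth compactly supported $\phi_n$ and using continuity of $\phi \mapsto \big((-\Delta)^{s/2} u,(-\Delta)^{s/2}(u\times \phi)\big)$ with respect to $\tH^s$-convergence of $\phi$ would yield~\eqref{eq:hm_crit_cross}. The genuinely hard step is the nonlinear limit passage in the commutator term; this is precisely what \Cref{pr:strongconv} and the interpolation bound~\eqref{eq:aux_interpol} built into \Cref{cor:compact_discrete} are designed to handle, while all other steps are routine compactness and finite element estimates.
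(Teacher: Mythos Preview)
Your proposal is correct and follows essentially the same route as the paper: verify $|u_h|^2\to 1$ in $L^1(\O)$ via an elementwise estimate that produces $h^2\|\nabla u_h\|^2$ on the right-hand side, apply the inverse estimate~\eqref{eq:inv_est_a}, and then invoke \Cref{cor:compact_discrete} with the admissible test field $v_h=\cI_h[u_h\times\phi]$ to pass to the limit in~\eqref{eq:discr_hm}. The paper phrases the constraint estimate as the nodal interpolation error $\||u_h|^2-\cI_h[|u_h|^2]\|_{L^1(\O)}\aleq h^2\|D_h^2|u_h|^2\|_{L^1(\O)}\aleq h^2\|\nabla u_h\|^2$, which is exactly your barycentric identity in different clothing.

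One remark on your final paragraph: the paper does not carry out the density argument from~\eqref{eq:frac_hm_cross} to~\eqref{eq:hm_crit_cross} in this proof; it simply relies on the equivalence announced in the introduction. Your attempt to make this explicit is more careful, but the asserted continuity of $\phi\mapsto\big((-\Delta)^{s/2}u,(-\Delta)^{s/2}(u\times\phi)\big)$ under $\tH^s$-convergence of $\phi$ is not immediate, since multiplication by $u\in L^\infty\cap H^s$ is not obviously bounded on $\tH^s$ when $s<d/2$. This step would need an additional argument (e.g.\ via the commutator estimate~\eqref{eq:estclaim} with the roles of the two factors examined), or one can simply regard the equivalence as part of the definition of fractional harmonic map, as the paper does.
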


\begin{proof} 
The statement is an immediate consequence of  
Corollary~\ref{cor:compact_discrete} together with the nodal 
interpolation estimate 
\[\begin{split}
\||u_h|^2 -1 \|_{L^1(\O)} &= \||u_h|^2 - \cI_h[|u_h|^2]\|_{L^1(\O)}  \\
& \aleq h^2 \|D_h^2  |u_h|^2 \|_{L^1(\O)} \le c h^2 \|\nabla u_h\|_{L^2(\O)}^2,
\end{split}\]
where $D_h^2$ denotes the elementwise application of the second derivative.
With the compact embedding $\tH^s(\O;\R^N)\hookrightarrow L^1(\O;\R^N)$ and the inverse
estimate~\eqref{eq:inv_est_a} we deduce that 
$|u|^2=1=\lim_{h'\to 0} |u_{h'}|^2$ almost everywhere in $\O$.
\end{proof}

\subsection{Fractional harmonic map heat flow}\label{s:fhmheat}
We next discuss the convergence of numerical approximations of the
$L^2$-gradient flow of the constrained fractional Dirichlet energy,
i.e., suitable solutions $u:(0,T)\times \O \to S^{N-1}$ of the evolution equation
\[
(\p_t u,v) + ((-\Delta)^{\frac{s}{2}} u,(-\Delta)^{\frac{s}{2}} v) = 0
\]
for all $v\in \tH^s(\O;\R^N)$ with
$u(t,x)\cdot v(x) = 0$. The problem is complemented by the Dirichlet exterior 
condition $u(t,\cdot) - \vec{N} \in \tH^s(\O;\R^N)$ for all $t\in (0,T)$
and the initial condition $u(0,\cdot) = u_0$ in $\O$. The following numerical
scheme uses a semi-implicit time discretization with an explicit
treatment of the linearized length constraint. Note that we follow~\cite{Bart16}
and avoid a correction step which leads to a progressive
constraint violation. 

\begin{algorithm}[Discrete $L^2$-flow]\label{alg:l2_flow_discr}
Let $\tau>0$ and $u_h^0\in \cI_h \vec{N} + \cS^1_0(\cT_h)^N$
with $|u_h^0(z)|^2 =1$ for all $z\in \cN_h$. For 
$k=0,1,\dots,K$ compute $d_t u_h^k\in \cF_h[u_h^{k-1}]$ such that
\[
(d_t u_h^k,v_h) 
+ \left((-\Delta)^{\frac{s}{2}} [u_h^{k-1}+\tau d_t u_h^k],(-\Delta)^{\frac{s}{2}} v_h\right) = 0
\]
for all $v_h \in \cF_h[u_h^{k-1}]$, and define 
$u_h^k= u_h^{k-1} + \tau d_t u_h^k$.
\end{algorithm}

The algorithm is unconditionally stable and convergent; the violation
of the constraint is bounded independently of the number of iterations.

\begin{proposition}\label{prop:stab_conv}
There exist uniquely defined iterates $\{u_h^k\}_{k=0,\dots,K} \in \cS^1(\cT_h)^N$
with $u_h^k|_{\widetilde\O \setminus \O} = \cI_h\vec{N}]_{\widetilde\O \setminus \O}$ 
and for all $K'\le K$
\[
\frac12 \|(-\Delta)^{\frac{s}{2}} u_h^{K'} \|^2 
+ \tau \sum_{k=1}^{K'} \|d_t u_h^k\|^2 
\le \frac12 \|(-\Delta)^{\frac{s}{2}} u_h^0 \|^2.
\]
Moreover, letting $e_{h,0}$ denote the discrete initial energy on
the right-hand side of the inequality we have that
\[
\| \cI_h |u_h^k|^2 -1 \|_{L^1(\O)} \aleq \tau e_{h,0}.
\]
If the triangulations are quasiuniform then every weak accumulation point 
\[
u\in \vec{N}+ H^1(0,T;L^2(\O;\R^N))\cap L^\infty(0,T;\tH^s(\O;\R^N))
\]
for $(h,\tau) \to 0$ of the sequence of linear interpolants $\{\hu_h\}_{h>0}$ 
of the iterates $\{u_h^k\}_{k=0,\dots,K}$ solves the fractional harmonic map 
heat flow problem.
\end{proposition}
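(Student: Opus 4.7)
I plan to treat the four assertions of the proposition in sequence: unique solvability of the linear time-step problem, the discrete energy estimate, the nodewise constraint-violation bound, and the convergence to a continuous weak solution. The first three parts are essentially computational and follow the pattern already sketched in Section~\ref{sec:intro}; the fourth is the genuine obstacle, and it will rely crucially on the discrete compactness Corollary~\ref{cor:compact_discrete}.

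\textbf{Solvability, energy identity, constraint violation.} At each step $k$, the scheme determines $d_t u_h^k \in \cF_h[u_h^{k-1}]$ through the symmetric, coercive bilinear form $\tau^{-1}(\cdot,\cdot) + \bigl((-\Delta)^{s/2}\cdot,(-\Delta)^{s/2}\cdot\bigr)$ on the finite-dimensional subspace $\cF_h[u_h^{k-1}]$, so Lax--Milgram gives unique existence. Testing with the admissible choice $v_h = d_t u_h^k$ and applying the binomial identity $2a\cdot(a-b) = |a|^2 - |b|^2 + |a-b|^2$ to the fractional-Laplacian term, then telescoping over $k \le K'$ and multiplying by $\tau$, produces the stated energy estimate after discarding the nonnegative Crank--Nicolson remainder $\tfrac{\tau^2}{2}\sum \|(-\Delta)^{s/2} d_t u_h^k\|^2$. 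The constraint-violation bound follows from the defining nodal orthogonality $d_t u_h^k(z)\cdot u_h^{k-1}(z)=0$, which iterates to $|u_h^k(z)|^2 = 1 + \tau^2\sum_{\ell\le k}|d_t u_h^\ell(z)|^2$; multiplication by the weights $\beta_z$, summation over $z\in \cN_h$, and substitution of the energy estimate closes this step.

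\textbf{Space--time compactness.} Let $\hu_h$ and $\bar u_h^{\pm}$ denote the piecewise affine and piecewise constant time interpolants of $\{u_h^k\}$. The energy identity bounds $\hu_h$ in $L^\infty(0,T;\tH^s)$ and $\partial_t \hu_h$ in $L^2(0,T;L^2)$, so Aubin--Lions with the compact embedding $\tH^s \hookrightarrow L^2$ gives a subsequence $\hu_h \to u$ strongly in $L^2(0,T;L^2(\tO))$ and (after a further diagonal extraction) a.e., while $\|\hu_h - \bar u_h^\pm\|_{L^2(0,T;L^2)} \lesssim \tau^{1/2} \|\partial_t \hu_h\|_{L^2(L^2)}$ shows that $\bar u_h^{\pm}$ share the same strong limit. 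The constraint-violation estimate, uniform in $t$, then forces $|u(t,x)|^2 = 1$ for a.e.\ $(t,x)$, and the exterior Dirichlet condition is preserved by weak closure of $\vec N + \tH^s(\O;\R^N)$.

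\textbf{Main obstacle: limit passage in the nonlinear system.} For fixed $\phi \in C_c^\infty(\O;\R^{N'})$ and $\chi \in C_c^\infty(0,T)$, the choice $v_h(t) = \cI_h[\bar u_h^-(t)\times \phi]$ lies in $\cF_h[\bar u_h^-(t)]$ by pointwise orthogonality of the cross product. Multiplying the scheme by $\chi(t)$ and integrating in time yields
\[
\int_0^T \chi\,\bigl[(\partial_t \hu_h, \cI_h[\bar u_h^- \times \phi]) + ((-\Delta)^{s/2}\bar u_h^+, (-\Delta)^{s/2}\cI_h[\bar u_h^- \times \phi])\bigr]\,\dv t = 0.
\]
The linear $(\partial_t,\cdot)$-term passes to the limit by weak--strong pairing once \eqref{eq:aux_interpol} is used to replace $\cI_h[\bar u_h^- \times \phi]$ by $u\times\phi$. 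The nonlinear term is the delicate point: for a.e.\ fixed $t$ the hypotheses of Corollary~\ref{cor:compact_discrete} are met (uniform bound, a.e.\ unit-length limit from the previous step), so the integrand converges pointwise in $t$ to $((-\Delta)^{s/2}u, (-\Delta)^{s/2}(u\times\phi))$; a uniform majorant furnished by the energy bound together with \eqref{eq:aux_interpol} then permits dominated convergence to exchange limit and time integral. The resulting identity is the weak form of the fractional harmonic map heat flow, and the initial condition is recovered from $\hu_h(0) = u_h^0 \to u_0$. Throughout this last step, the hardest single ingredient is the pointwise-in-$t$ nonlinear passage, for which Corollary~\ref{cor:compact_discrete} was custom-built.
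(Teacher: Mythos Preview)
Your proposal is correct and follows essentially the same route as the paper: Lax--Milgram for solvability, the binomial identity for the energy estimate, the nodal Pythagorean recursion for the constraint bound, and Corollary~\ref{cor:compact_discrete} for the nonlinear limit passage. You supply considerably more detail in the convergence step (Aubin--Lions, the explicit test function $\cI_h[\bar u_h^-\times\phi]$, dominated convergence in time) than the paper, which dispatches the whole passage in a single sentence; the only point you leave implicit is that the bilinear term carries two distinct interpolants $\bar u_h^+$ and $\bar u_h^-$, whereas Corollary~\ref{cor:compact_discrete} is stated for a single sequence, but this is harmless since $\bar u_h^+ - \bar u_h^- = \tau\,\partial_t\hu_h$ and the resulting correction vanishes using the discarded term $\tfrac{\tau^2}{2}\sum\|(-\Delta)^{s/2}d_t u_h^k\|^2$ from the energy identity.
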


\begin{proof}
For every $k=1,2,\dots,K$ we have by the Lax--Milgram lemma
that there exists a unique solution $d_t u_h^k\in \cF_h[u_h^{k-1}]$
for $k=1,2,\dots,K$. 
By choosing $v_h = d_t u_h^k$ in the discrete equation and using
the binomial formula $2 b\cdot (b-a) = (b-a)^2 + (b^2-a^2)$ we
find that 
\[
\|d_t u_h^k \|^2 + \frac{d_t}{2} \|(-\Delta)^\frac{s}{2} u_h^k\|^2 
+ \frac{\tau}{2} \|(-\Delta)^\frac{s}{2} d_t u_h^k\|^2 = 0.
\]
A summation over $k=1,2,\dots,K'$ yields the asserted identity.
The orthogonality relation $d_t u_h^k \cdot u_h^{k-1}= 0$ at the nodes
in $\cN_h$ shows that for all $z\in \cN_h$ we have 
\[
|u_h^k(z)|^2  = |u_h^{k-1}(z)|^2 + \tau^2 |d_t u_h^k(z)|^2
= \dots = |u_h^0(z)|^2 + \tau^2 \sum_{\ell=1}^k |d_t u_h^\ell(z)|^2.
\]
By using $|u_h^0(z)|^2=1$ and summing over the nodes $z\in \cN_h$
and using the equivalence of discrete and continuous $L^p$ norms 
we deduce the estimate for the constraint violation. 
We let $\hu_h$ and $u_h^\pm$ denote the piecewise linear and
constant interpolants of $(u_h^k)_{k=0,\dots,K}$. In particular, 
$u_h^- = u_h^{k-1} + \tau d_t u_h^k$ and $u_h^+ = u_h^{k-1}$. 
For almost every
$t \in (0,T)$ we have that 
\[
(\p_t \hu_h,v_h) + \left((-\Delta)^{\frac{s}{2}} u_h^-,(-\Delta)^{\frac{s}{2}} v_h\right) = 0
\]
for all $v_h \in \cS^1_0(\cT_h)^N$ satisfying $v_h \cdot u_h^+(t,\cdot)=0$.
With the help of Corollary~\ref{cor:compact_discrete}
we may pass to a limit in this equation as $(h,\tau) \to 0$. 
\end{proof}

\subsection{Spin dynamics}\label{s:spin}
We finally address the approximation of solutions of the
unconstrained but length-preserving evolution equation
\[
\p_t u = (-\Delta)^s u \times u
\]
for a given initial state $u_0$, which describes the physical
principle that the rate of change of angular momentum equals torque.
For simplicity, we consider here periodic or homogeneous Neumann 
boundary conditions on $\R^d\setminus\O$. The evolution equation is length and energy 
preserving which is also satisfied by the following numerical scheme.
For this, the use of midpoint values
\[
u_h^{k-1/2}(z) = \frac12 (u_h^k(z)+u_h^{k-1}(z))
\]
is essential, we follow~\cite{KarWeb14,Bart15}.

\begin{algorithm}[Discrete spin dynamics]\label{alg:precess_discr}
Let $\tau>0$ and $u_h^0 \in \cS^1(\cT_h)^3$ with $|u_h^0(z)|^2=1$ for all
$z\in \cN_h$.
For $k=1,2,\dots,K$ compute $u_h^k \in \cS^1(\cT_h)^3$ such that
\[
(d_t u_h^k,v_h)_h = \big((-\Delta)^{\frac{s}{2}} u_h^{k-1/2}, (-\Delta)^{\frac{s}{2}} \cI_h [u_h^{k-1/2} \times v_h] \big)
\]
for all $v_h \in \cS^1(\cT_h)^3$. 
\end{algorithm}

A useful 
representation of the scheme is obtained with the discrete fractional
Laplacian $(-\Delta)^s_h:\cS^1(\cT_h)^3\to \cS^1(\cT_h)^3$ obtained as
the representative of the corresponding bilinear form with the
discrete inner product, i.e., $(-\Delta)^s_h u_h \in \cS^1_0(\cT_h)^3$
is defined as the unique function $y_h\in \cS^1_0(\cT_h)^3$ with 
\[
(y_h,v_h)_h = \big((-\Delta)^{\frac{s}{2}} u_h, (-\Delta)^{\frac{s}{2}} v_h\big) 
\]
for all $v_h \in \cS^1(\cT_h)^3$. With this discrete operator we have
\[
(d_t u_h^k,v_h)_h = \big(((-\Delta)^s_h u_h^{k-1/2}, \cI_h[u_h^{k-1/2}\times v_h]\big)_h
\]
for all $v_h\in \cS^1(\cT_h)^3$. Owing to the use of the discrete
inner product this is equivalent to the equality of nodal values, i.e.,
\[
d_t u_h^k(z) = u_h^{k-1/2}(z) \times (-\Delta)^s_h u_h^{k-1/2}(z)
\]
for all $z\in \cN_h$. With these preparations we deduce the
constraint and energy preservation properties.  

\begin{proposition}\label{prop:discr_precession}
There exists a sequence $\{u_h^k\}_{k=0,\dots,K}$ that satisfies the nonlinear
discrete system of Algorithm~\ref{alg:precess_discr} for $k=1,2,\dots,K$.
Every solution $\{u_h^k\}_{k=0,\dots,K}$ satisfies $|u_h^k(z)|^2 = 1$ for all
$z\in \cN_h$ and $k=0,1,\dots,K$ and 
\[
\frac12 \|(-\Delta)^{\frac{s}{2}} u_h^k\|^2 =  \frac12 \|(-\Delta)^{\frac{s}{2}} u_h^0\|^2
\]
for $k=1,2,\dots,K$. 
\end{proposition}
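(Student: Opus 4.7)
The plan is to establish the three conclusions in order: existence of iterates by a Brouwer-type fixed-point argument, and the two preservation identities by testing the discrete scheme with two specific choices of $v_h$. The preservation claims are essentially sketched in the paragraphs preceding Algorithm~\ref{alg:precess_discr}, so the task there is to fill in the details rigorously; existence is the genuinely new content.

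For existence at step $k$, I recast the problem as a fixed-point problem for the midpoint $w_h := u_h^{k-1/2}$. Using the nodewise representation noted just before Proposition~\ref{prop:discr_precession}, the scheme is equivalent to finding $w_h \in \cS^1(\cT_h)^3$ with
\[
w_h(z) - u_h^{k-1}(z) = \frac{\tau}{2}\, w_h(z) \times (-\Delta)^s_h w_h(z) \quad \text{for all } z \in \cN_h,
\]
after which one sets $u_h^k := 2w_h - u_h^{k-1}$. Writing $\Phi(w_h)(z) := w_h(z) - u_h^{k-1}(z) - \tfrac{\tau}{2}\, w_h(z) \times (-\Delta)^s_h w_h(z)$ defines a continuous self-map of the finite-dimensional space $\cS^1(\cT_h)^3$, since $(-\Delta)^s_h$ is linear and the cross product is bilinear. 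Because $w_h(z) \cdot (w_h(z) \times a) = 0$ for every $a \in \R^3$, one has $(\Phi(w_h), w_h)_h = \|w_h\|_h^2 - (u_h^{k-1}, w_h)_h$, which is strictly positive on the sphere $\{\|w_h\|_h = R\}$ whenever $R > \|u_h^{k-1}\|_h$. The standard corollary of Brouwer's fixed-point theorem then delivers a zero of $\Phi$ inside this ball.

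For length preservation at a fixed node $z_0 \in \cN_h$, I test with $v_h = u_h^{k-1/2}(z_0)\, \vphi_{z_0}$. The lumped left-hand side collapses to $\beta_{z_0}\, d_t u_h^k(z_0) \cdot u_h^{k-1/2}(z_0)$, which by the binomial identity $(b-a) \cdot \tfrac12(b+a) = \tfrac12(|b|^2 - |a|^2)$ equals $\tfrac{\beta_{z_0}}{2}\, d_t |u_h^k(z_0)|^2$. On the right, the nodal interpolant $\cI_h[u_h^{k-1/2} \times u_h^{k-1/2}(z_0)\vphi_{z_0}]$ vanishes at every node of $\cT_h$: at $z_0$ it evaluates to $u_h^{k-1/2}(z_0) \times u_h^{k-1/2}(z_0) = 0$, and at any other node $\vphi_{z_0}$ vanishes, so this interpolant is identically zero. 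Hence $d_t|u_h^k(z_0)|^2 = 0$, and induction from $|u_h^0(z_0)|^2 = 1$ closes this step. For the energy identity I instead test with $v_h = (-\Delta)^s_h u_h^{k-1/2} \in \cS^1(\cT_h)^3$. One application of the defining identity of $(-\Delta)^s_h$ converts the left-hand side into $\bigl((-\Delta)^{\frac{s}{2}} u_h^{k-1/2}, (-\Delta)^{\frac{s}{2}} d_t u_h^k\bigr)$, which by the same binomial identity applied to $(-\Delta)^{\frac{s}{2}} u_h^k$ and $(-\Delta)^{\frac{s}{2}} u_h^{k-1}$ equals $\tfrac12\, d_t \|(-\Delta)^{\frac{s}{2}} u_h^k\|^2$. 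A second application of the defining identity recasts the right-hand side as the lumped sum
\[
\sum_{z \in \cN_h} \beta_z\, (-\Delta)^s_h u_h^{k-1/2}(z) \cdot \bigl(u_h^{k-1/2}(z) \times (-\Delta)^s_h u_h^{k-1/2}(z)\bigr) = 0
\]
by the antisymmetry of the scalar triple product, and telescoping finishes the argument.

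The main obstacle is the existence step: the unknown appears in both factors on the right-hand side, so the problem is genuinely nonlinear and Lax--Milgram is unavailable, in contrast to the semi-implicit Algorithm~\ref{alg:l2_flow_discr}. The two preservation identities, by contrast, follow almost mechanically from the antisymmetry relation $a \cdot (a \times b) = 0$ together with the midpoint structure and the lumped inner product; this is precisely why the variational formulation is written with $\cI_h[u_h^{k-1/2} \times v_h]$ rather than a plain cross product, since the nodal interpolation is what allows the antisymmetry to survive at the discrete level.
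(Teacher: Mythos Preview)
Your proof is correct and follows essentially the same approach as the paper's. You recast existence as a fixed-point problem for the midpoint $w_h=u_h^{k-1/2}$, exploit the orthogonality $w_h(z)\cdot(w_h(z)\times a)=0$ to obtain coercivity on a large sphere, and invoke Brouwer; the paper does the same, only writing the map in weak form $\Phi_h(\bar u_h^k)[v_h]$ rather than nodewise. For the two preservation identities you use exactly the paper's test functions $v_h=u_h^{k-1/2}(z_0)\vphi_{z_0}$ and $v_h=(-\Delta)^s_h u_h^{k-1/2}$, with somewhat more detail on why the right-hand side vanishes in each case.
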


\begin{proof}
Given $u_h^{k-1}$ the average $\ou_h^k = (u_h^k+u_h^{k-1})/2$ is required to 
satisfy $\Phi_h(\ou_h^k)[v_h] = 0$ for all $v_h\in \cS^1(\cT_h)^3$, where
\[
\Phi_h(\ou_h^k)[v_h] = \frac{2}{\tau} (\ou_h^k-u_h^{k-1},v_h)_h -
 \big((-\Delta)^s_h \ou_h^k, \cI_h[\ou_h^k\times v_h]\big)_h.
\]
By choosing $v_h = \ou_h^k$ we find that 
\begin{align*}
\Phi_h(\ou_h^k)[\ou_h^k] = \frac{2}{\tau} (\ou_h^k- u_h^{k-1},\ou_h^k) 
&\ge \frac{2}{\tau} \big( \|\ou_h^k\|^2 - \|\ou_h^k\| \|u_h^{k-1}\|\big) \\
& \ge \frac{1}{\tau} \left( \|\ou_h^k\|^2 - \|\ou_h^{k-1}\|^2 \right) ,
\end{align*}
i.e., $\Phi_h(\ou_h^k)[\ou_h^k] \ge 0$ for $\|\ou_h^k\| \ge \|\ou_h^{k-1}\|$.
Hence, Brouwer's fixed-point theorem implies the existence of a solution
$\Phi_h(\ou_h^k)[v_h] = 0$ for all $v_h\in \cS^1(\cT_h)^3$. If 
$\{u_h^k\}_{k=0,\dots,K}$ is an arbitrary sequence satisfying the equations of  
Algorithm~\ref{alg:precess_discr} then choosing $v_h = u_h^{k-1/2}(z) \vphi_z$
implies that 
\[
\b_z d_t u_h^k(z) \cdot u_h^{k-1/2}(z) = 0,
\]
i.e., $\b_z d_t |u_h^k(z)|^2 =0$ and hence $|u_h^k(z)|^2 =1$ for all
$z\in \cN_h$ and $k=1,2,\dots,K$. By choosing $v_h = (-\Delta)^s_h u_h^{k-1/2}$
we find that 
\[
0 = (d_t u_h^k,(-\Delta)^s_h u_h^{k-1/2})_h = ((-\Delta)^{\frac{s}{2}} d_t u_h^k, (-\Delta)^{\frac{s}{2}} u_h^{k-1/2}) = 0,
\]
i.e., $d_t \|(-\Delta)^{\frac{s}{2}} u_h^k\|^2 = 0$. 
\end{proof}

If the step size is sufficiently small then the nonlinear systems of equations 
that arise in the steps of Algorithm~\ref{alg:precess_discr} have unique solutions which
can be computed with a simple fixed-point
iteration.

\begin{proposition}\label{prop:fixed_spin}
Given $u_h^{k-1} \in \cS^1(\cT_h)^3$ with $\|u_h^{k-1}\|_{L^\infty(\O)}= 1$ 
a solution $u_h^k \in \cS^1(\cT_h)^3$ 
is determined via $u_h^k = 2 r_h - u_h^{k-1}$, where $r_h\in \cS^1(\cT_h)^3$
is a fixed point of the iteration 
\[
r_h^\ell = u_h^{k-1} 
+ \frac{\tau}{2} \cI_h [ r_h^\ell \times (-\Delta)^s_h r_h^{\ell-1}]
\]
for $\ell = 1,2,\dots$ with arbitrary $r_h^0 \in  \cS^1(\cT_h)^3$. The 
iteration is globally convergent provided that $\tau < c_\inv^{-2} h^{2s} |\O|^{-1/2}$. 
\end{proposition}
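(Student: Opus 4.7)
The plan is to prove existence of the fixed point $r_h$ by Banach's contraction principle applied to the iteration map $T:r_h^{\ell-1}\mapsto r_h^\ell$ on $\cS^1(\cT_h)^3$ equipped with a discrete norm such as $\|\cdot\|_{L^2_h(\O)}$, and then to verify that $u_h^k:=2r_h-u_h^{k-1}$ solves Algorithm~\ref{alg:precess_discr}. I would begin by observing that the iteration is pointwise decoupled: at every node $z\in \cN_h$ it reduces to the $3\times 3$ linear system
\[
r_h^\ell(z) - \frac{\tau}{2}\, r_h^\ell(z)\times w(z) = u_h^{k-1}(z), \qquad w := (-\Delta)_h^s r_h^{\ell-1},
\]
whose matrix has determinant $1+(\tau^2/4)|w(z)|^2>0$ and is hence invertible. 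Taking the squared modulus and exploiting the orthogonality of $r_h^\ell(z)\times w(z)$ to $r_h^\ell(z)$ yields the Pythagorean identity
\[
|r_h^\ell(z)|^2 + \frac{\tau^2}{4}|r_h^\ell(z)\times w(z)|^2 = |u_h^{k-1}(z)|^2 = 1,
\]
so $\|r_h^\ell\|_{L^\infty(\O)}\le 1$ uniformly in $\ell$, ensuring $T$ maps the closed unit ball in $L^\infty$ into itself.

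For the contraction, I would consider two sequences generated from starting values $r_h^0$ and $\tilde r_h^0$ and set $\d r^m := r_h^m - \tilde r_h^m$ and $\d w := (-\Delta)_h^s \d r^{\ell-1}$. Subtracting the nodal equations, using the bilinear decomposition $a\times b - c\times d = (a-c)\times b + c\times(b-d)$, and invoking the same Pythagorean orthogonality as above together with the uniform bound $|\tilde r_h^\ell(z)|\le 1$ gives the nodewise estimate $|\d r^\ell(z)| \le (\tau/2)|\d w(z)|$. Weighting by $\b_z$ and summing over $z\in \cN_h$ produces
\[
\|\d r^\ell\|_{L^2_h(\O)} \le \frac{\tau}{2}\,\|(-\Delta)_h^s\,\d r^{\ell-1}\|_{L^2_h(\O)}.
\]
Two successive applications of the inverse estimate~\eqref{eq:inv_est_b} to the definition of $(-\Delta)_h^s$ yield the discrete operator bound $\|(-\Delta)_h^s v_h\|_{L^2_h(\O)} \aleq c_\inv^2 h^{-2s}\|v_h\|_{L^2_h(\O)}$; combining this with the standard norm-equivalence on $\cS^1(\cT_h)$ and the embedding $\|\cdot\|\le |\O|^{1/2}\|\cdot\|_{L^\infty(\O)}$ used to switch between $L^\infty$ and $L^2$ estimates delivers a global contraction factor of order $(\tau c_\inv^2 |\O|^{1/2})/h^{2s}$. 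The assumption $\tau<c_\inv^{-2}h^{2s}|\O|^{-1/2}$ then makes $T$ a strict contraction on the closed unit ball, and Banach's fixed-point theorem supplies a unique fixed point $r_h$. Setting $u_h^k:=2r_h-u_h^{k-1}$, a direct check shows that the fixed-point equation is equivalent to the nodal identity $d_t u_h^k(z) = u_h^{k-1/2}(z)\times(-\Delta)_h^s u_h^{k-1/2}(z)$, which in turn is equivalent to Algorithm~\ref{alg:precess_discr}.

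The main difficulty is not the structural argument, which is essentially a pointwise orthogonality computation followed by Banach's theorem, but the precise constant tracking that produces the stated threshold with the factor $|\O|^{-1/2}$; this requires careful bookkeeping among the constants in~\eqref{eq:inv_est_b}, the $L^2/L^2_h$-equivalence on $\cS^1(\cT_h)$, and the $L^2/L^\infty$-conversion that reconciles the pointwise bound $|\tilde r_h^\ell(z)|\le 1$ with the global discrete norm in which strict contraction is established.
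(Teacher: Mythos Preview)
Your argument is correct but takes a genuinely different route from the paper's. Two structural differences stand out. First, the paper obtains the a~priori bound $\|r_h^\ell\|_h \le \|u_h^{k-1}\|_h \le |\O|^{1/2}$ by testing the weak form of the iteration with $v_h=r_h^\ell$ and invoking Lax--Milgram for existence; you instead work nodewise, invert the $3\times 3$ system explicitly, and use the Pythagorean identity to get the sharper pointwise bound $|r_h^\ell(z)|\le 1$. Second, the paper shows convergence via a Cauchy--sequence argument on consecutive differences $\delta_h^\ell = r_h^\ell - r_h^{\ell-1}$, arriving at
\[
2\|\delta_h^\ell\|_h \le \tau c_{\inv}^2 h^{-2s}|\O|^{1/2}\big(\|\delta_h^\ell\|_h + \|\delta_h^{\ell-1}\|_h\big),
\]
whereas you run a direct Banach contraction comparing two sequences. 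Your second use of the Pythagorean orthogonality absorbs the term $\delta r^\ell \times w$ at no cost and yields the cleaner one-sided estimate $\|\delta r^\ell\|_{L^2_h}\le (\tau/2)\|(-\Delta)_h^s \delta r^{\ell-1}\|_{L^2_h}$.

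This comparison also explains your closing worry: the factor $|\O|^{1/2}$ is not missing from your bookkeeping --- it simply does not arise in your approach. In the paper it enters because the only available bound on $r_h^{\ell-1}$ is $\|r_h^{\ell-1}\|_h\le |\O|^{1/2}$, which feeds into $\|(-\Delta)_h^s r_h^{\ell-1}\|_h\le c_{\inv}^2 h^{-2s}|\O|^{1/2}$. You never need such a step, since your nodewise estimate uses only $|\tilde r_h^\ell(z)|\le 1$. Consequently your contraction factor is of order $\tau c_{\inv}^2 h^{-2s}$ (up to the $\|\cdot\|$--$\|\cdot\|_h$ equivalence constant), which is a marginally \emph{weaker} step-size restriction than the one stated; there is no need to artificially insert $|\O|^{1/2}$ via an $L^2/L^\infty$ conversion.
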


\begin{proof}
The Lax--Milgram lemma implies the existence of uniquely defined 
iterates $(r_h^\ell)_{\ell=1,2,\dots}$ which are equivalently characterized via
\[
(r_h^\ell,v_h)_h = (u_h^{k-1},v_h)_h + \frac{\tau}{2} 
( r_h^\ell \times (-\Delta)^s_h r_h^{\ell-1},v_h)_h
\]
for all $v_h\in \cS^1(\cT_h)^3$. By choosing $v_h = r_h^\ell $ we find that
$\|r_h^\ell\|_h \le \|u_h^{k-1}\|_h \le |\O|^{1/2}$
for $\ell =1,2,\dots$. The difference $\d_h^\ell = r_h^\ell - r_h^{\ell-1}$
of two iterates satisfies 
\[
\d_h^\ell = \frac{\tau}{2} \cI_h [ \d_h^\ell \times (-\Delta)^s_h r_h^{\ell-1}]
+ \frac{\tau}{2} \cI_h [ r_h^{\ell-1} \times (-\Delta)^s_h \d_h^{\ell-1}]
\]
using the inverse estimate~\eqref{eq:inv_est_b} we find that
$\|(-\Delta)^s_h r_h^{\ell-1} \|_h \le  c_\inv^2 h^{-2s} |\O|^{1/2}$ and
\[
2 \|\d_h^\ell\|_h \le \tau c_\inv^2 h^{-2s} |\O|^{1/2} \big( \|\d_h^\ell\|_h +  \|\d_h^{\ell-1}\|_h \big).
\]
Hence, if $q= \tau c_\inv^2 h^{-2s} |\O|^{1/2} <1$ we find that $\|\d_h^\ell\|_h \le q^{\ell-1} \|\delta^1_h\|_h$
and hence that $r_h^\ell$ converges as $\ell \to \infty$. 
\end{proof}

\begin{remark}
With the linear interpolants $(\hu_{h,\tau})$ and the
piecewise averages $(\ou_{h,\tau})$ of the iterates $(u_h^k)_{k=0,\dots,K}$ 
the numerical scheme can be written as 
\[
(\p_t \hu_{h,\tau},v_h) + ((-\Delta)^{\frac{s}{2}} \ou_h, (-\Delta)^{\frac{s}{2}}\cI_h[\ou_h \times v_h])  = 0
\]
for all $v_h \in \cS^1(\cT_h)^3$. Weak accumulation points of the sequence
$(\hu_{h,\tau})$ as $(h,\tau)\to 0$ for sequences of quasiuniform triangulations
then satisfy the equation
\[
- \int_0^T (u,\p_t \phi) \dv{t} 
+ \int_0^T ((-\Delta)^{\frac{s}{2}} u, (-\Delta)^{\frac{s}{2}}[u\times \phi]) \dv{t} 
= (u_0,\phi(0))
\]
for all $\phi \in C^\infty([0,T];C^\infty_c(\O;\R^3))$ with $\phi(T,\cdot)=0$. This
follows from an application of Corollary~\ref{cor:compact_discrete}.
\end{remark}


\section{Numerical experiments}\label{s:numexp}

In this section we illustrate the performance of the numerical methods via numerical
experiments for one-dimensional spin chain dynamics and the fractional harmonic map heat
flow. Fractional harmonic maps arise here as stationary limiting points of the 
fractional harmonic map heat flow. 

\subsection{Spin dynamics}
We consider the spin system from~\cite{ZhoSto15}
\begin{equation}\label{eq:spin_per}
\p_t u = - u \times (-\Delta)^s u, \quad u(0) = u_0,
\end{equation}
in a one-dimensional periodic setting, i.e., we use periodic
boundary conditions on $\O=(0,2\pi)$ and write $\O=\T$.
This allows us to approximate the fractional Laplace operator via
a Fourier sum, i.e., given a continuous function $w \in C(\T)$ we
define its discrete Fourier transform via the coefficients
\[
\tv_k = \frac{2\pi}{M} \sum_{j=0}^{M}  e^{-\i k x_j} v(x_j) 
\]
for $k=-M/2,-M/2+1,\dots,M/2-1$, $M\in \N$ even, and  
with $x_j = j 2 \pi/M$, $j=0,1,\dots,M$, we refer the reader to~\cite{AntBar17}
for details. The coefficients are obtained from standard
implementations of the FFT method. The span of the trigonometric basis functions
$\vphi^k(x) = e^{\i k x}$, $x\in \T$, $k=-M/2,\dots,M/2+1$, defines
the discrete space $\cS_M$. For $v\in \cS_M$ we have the
representation
\[
v = \frac{1}{2\pi} \sum_{k=-M/2}^{M/2-1} \tv_k \vphi^k.
\]
The discrete fractional Laplace operator $(-\Delta)_M^s$ is for
$v\in C(\T)$ defined as
\[
(-\Delta)_M^s v = \frac{1}{2\pi} \sum_{k=-M/2}^{M/2-1} |k|^{2s} \tv_k \vphi^k.
\]
We remark that for functions $v,w\in \cS_M$ quadrature is exact in
the approximation of the $L^2$~inner product of complex valued functions,
i.e., we have
\[
(v,w)_{L^2(\T;\C)} = \frac{2\pi}{M} \sum_{j=0}^M v(x_j) \overline{w(x_j)}
= (v,\overline{w})_h. 
\]
With these settings we replace Algorithm~\ref{alg:precess_discr}
by the following iteration in which $\cT_h$ is a uniform partition
of $\T$ into $M$ intervals $T_j = [x_{j-1},x_j]$, $j=1,2,\dots,M$ of
length $h=2\pi/M$

\begin{algorithm}[Discrete spin dynamics]\label{alg:precess_discr_2}
Let $\tau>0$ and $u_h^0 \in \cS^1(\cT_h)^3$ with $|u_h^0(z)|^2=1$ for all
$z\in \cN_h$.
For $k=1,2,\dots,K$ compute $u_h^k \in \cS^1(\cT_h)^3$ such that 
\[
d_t u_h^k = - \cI_h \big[ u_h^{k-1/2} \times (-\Delta)^s_M u_h^{k-1/2} \big].
\]
\end{algorithm}

Our first example leads to a solitary traveling wave solution given 
via the simplest Blaschke function $\cB(z) = z$, cf.~\cite{LenSch18}.

\begin{example}\label{ex:travel_spin}
Let $s=1/2$, $T=4\pi$, $v=1/2$, and for $x \in \T$ define 
\[
u^0(x) = \big[v,(1-v^2)^{1/2} \cos(x), (1-v^2)^{1/2} \sin(x)\big]^\transp.
\]
Then $u(t,x)= u^0(x-vt)$ solves the spin dynamics system~\eqref{eq:spin_per}.
\end{example}

Figure~\ref{fig:ex_blaschke_snaps} shows snapshots of the evolution computed
with Algorithm~\ref{alg:precess_discr_2}. We observe that the initial
state re-occurs when the time horizon $T=4\pi$ is reached by the
time stepping scheme. The nonlinear systems of equations in the time
steps of the algorithm were approximately solved with the fixed-point 
iteration specified in the proof of Proposition~\ref{prop:fixed_spin}. Our 
overall observation is that a few iterations are sufficient to 
decrease the $L^2$~difference of two iterates below the 
tolerance $\tau^2$. Nearly no variations of the discrete energies and
lengths of the vectors were observed.

\begin{figure}[p]
\includegraphics[width=4.3cm]{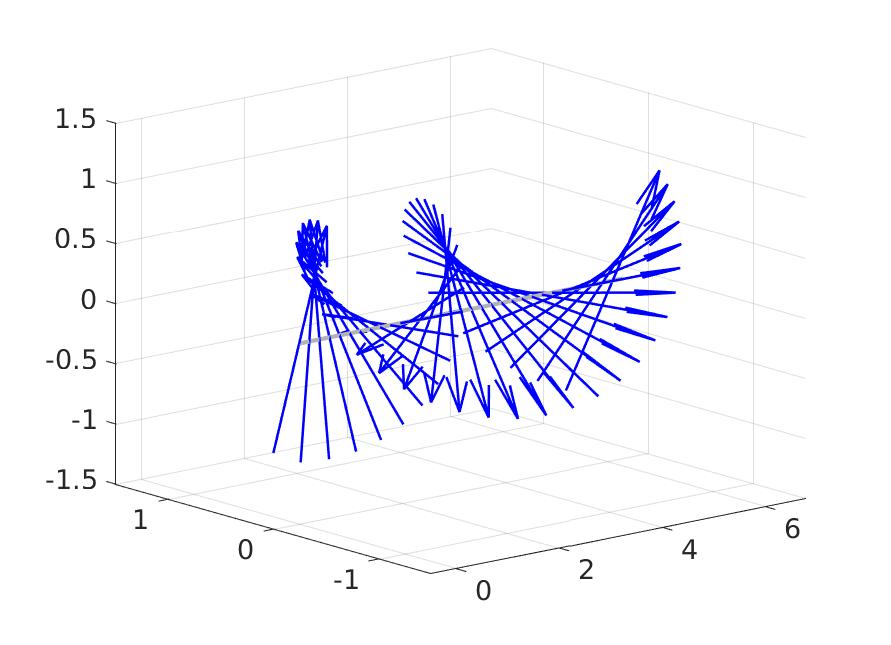}
\includegraphics[width=4.3cm]{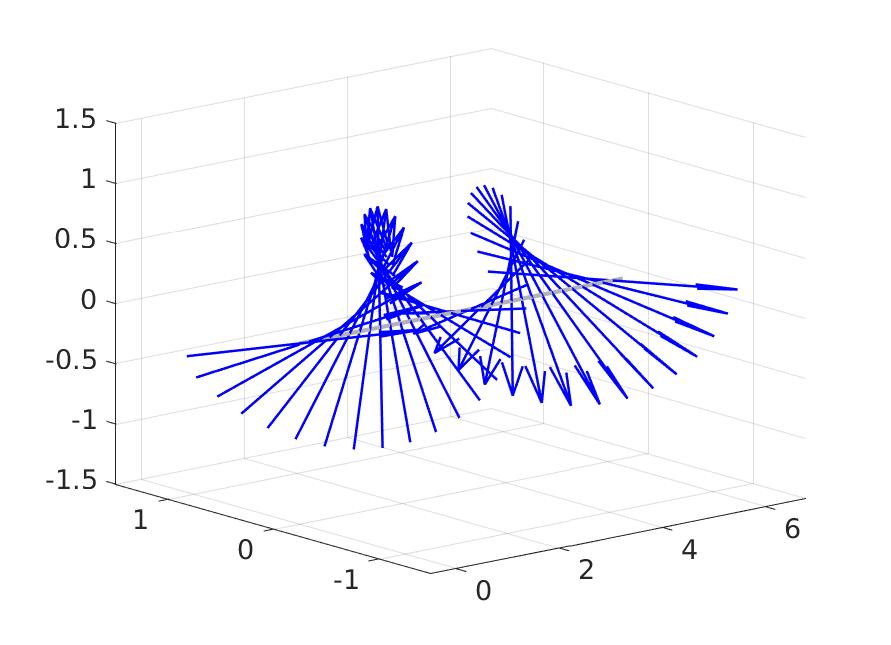}
\includegraphics[width=4.3cm]{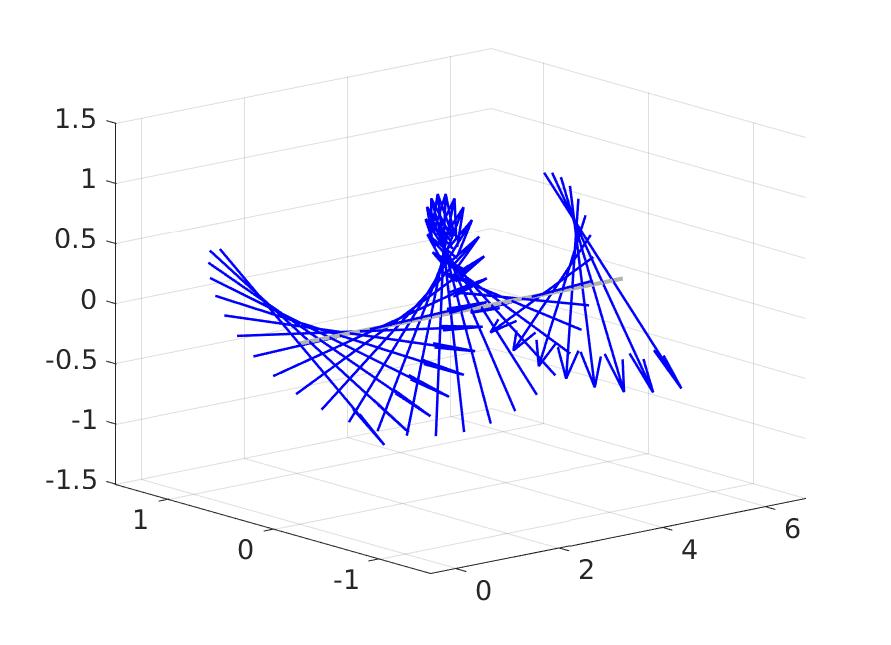} \\
\includegraphics[width=4.3cm]{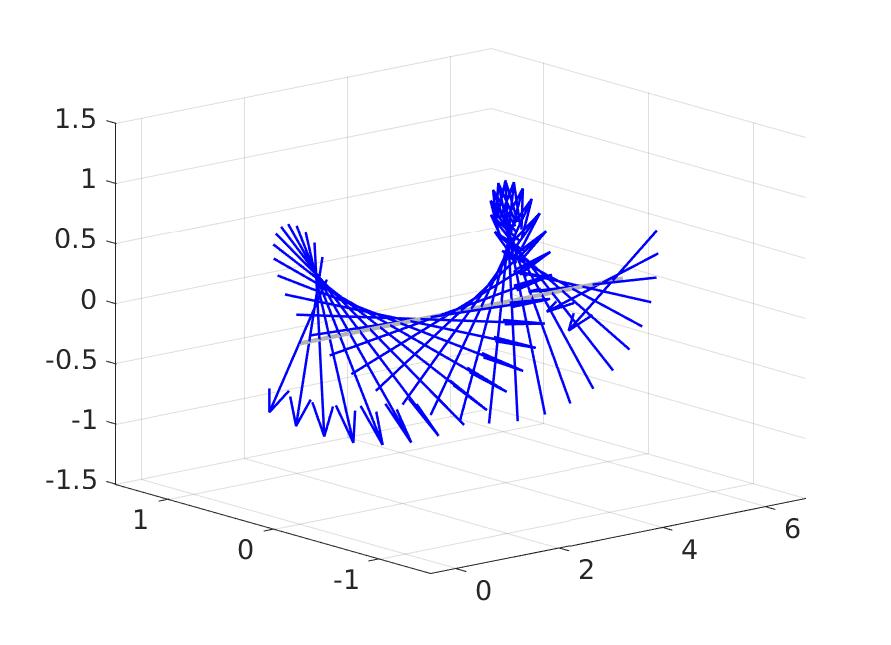}
\includegraphics[width=4.3cm]{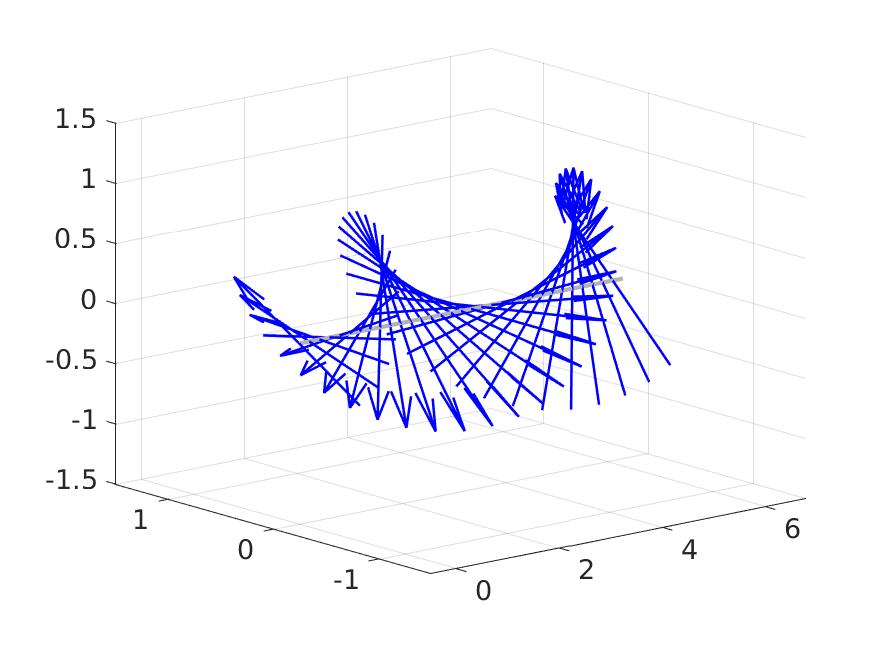}
\includegraphics[width=4.3cm]{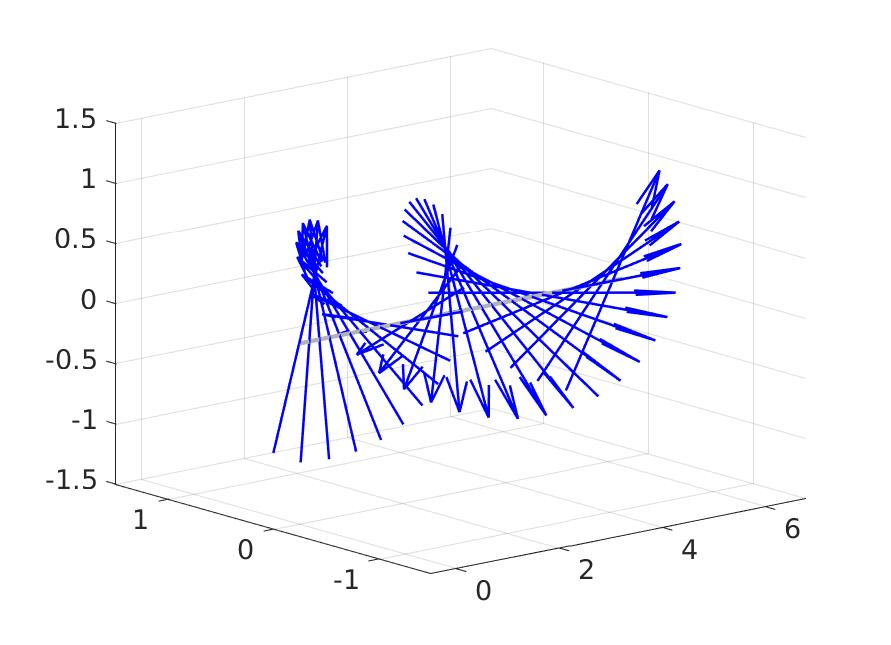} 
\caption{\label{fig:ex_blaschke_snaps} Snapshots of approximations of a 
solitary wave in a periodic spin chain for $t_\ell =(\ell/5) T$, 
$\ell = 0,1,\dots,5$ (left to right, top to bottom),
obtained with Algorithm~\ref{alg:precess_discr_2}
for $M=32$, $h=2\pi/M$, and $\tau = h/10$.}
\end{figure}

The initial data in the second experiment are a perturbation of a 
harmonic map. We let $\Pi_{S^2}:\R^3\setminus \{0\} \to S^2$ denote the 
orthogonal projection onto the unit sphere. 

\begin{example}\label{ex:perturbed_hm}
Let $s=1/2$, $T= 4$, and for $\xi:\T \to \R^3$ with 
$\|\xi\|_{L^\infty(\T)} \le 1/2$  define 
\[
u^0(x) = \Pi_{S^2} \big[\xi_1(x),\cos(x) + \xi_2(x), \sin(x) + \xi_3(x)\big]^\transp.
\]
\end{example}

We used a perturbation~$\xi$ satisfying $\|\xi\|_{L^\infty(\T)} \le 0.05$. 
Some iterates of the discrete evolution defined by the time-stepping
scheme of Algorithm~\ref{alg:precess_discr_2} are displayed in 
Figure~\ref{fig:sol_hm_pert}. Due to the presence of the perturbation
the solution oscillates between perturbations of the stationary states
$u_\pm(x) = \pm \big[0,\cos(x),\sin(x)\big]^\transp$. Because of the
less regular solution compared with the example considered above,
slightly more iterations are needed to solve the nonlinear systems
of equations and a corresponding moderately increased violation of 
the energy conservation property is observed. For the tested
discretizations with $M=64, 128, 256$, $h=2\pi/M$, and $\tau = h/10$,
these violations were smaller than $\d = 10^{-3}$ and decayed super 
linearly as $h\to 0$. 

\begin{figure}[p]
\includegraphics[width=4.3cm]{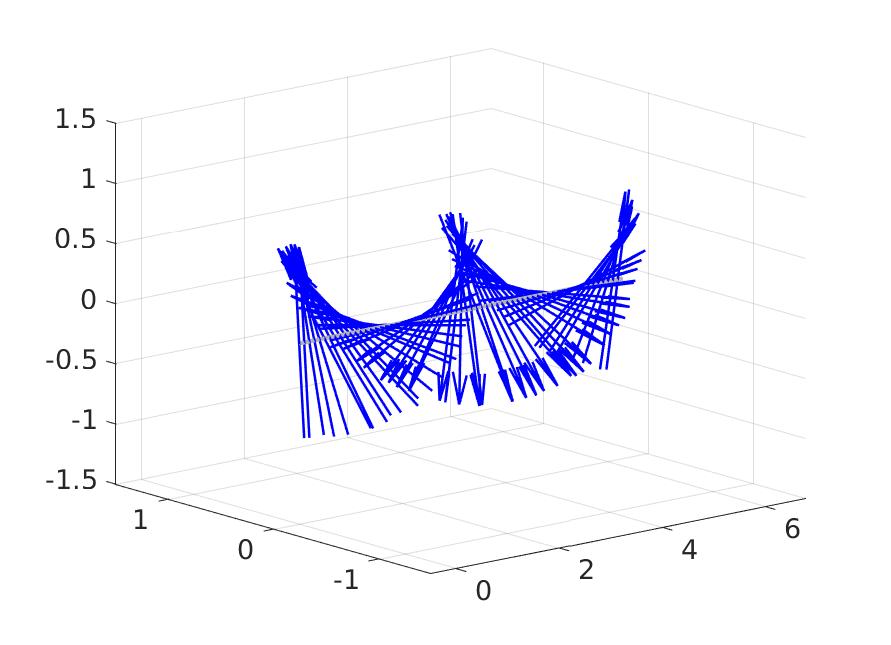}
\includegraphics[width=4.3cm]{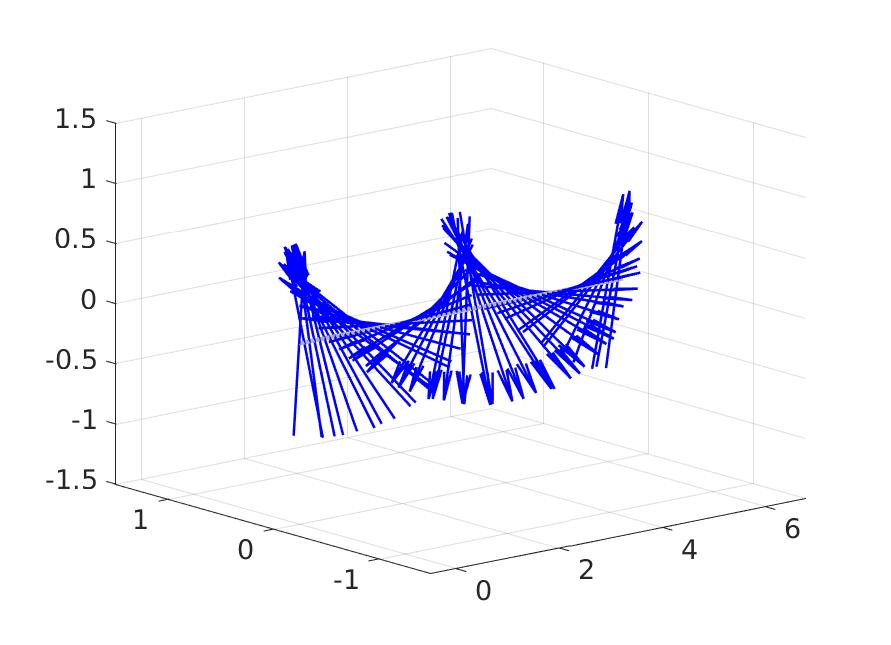} 
\includegraphics[width=4.3cm]{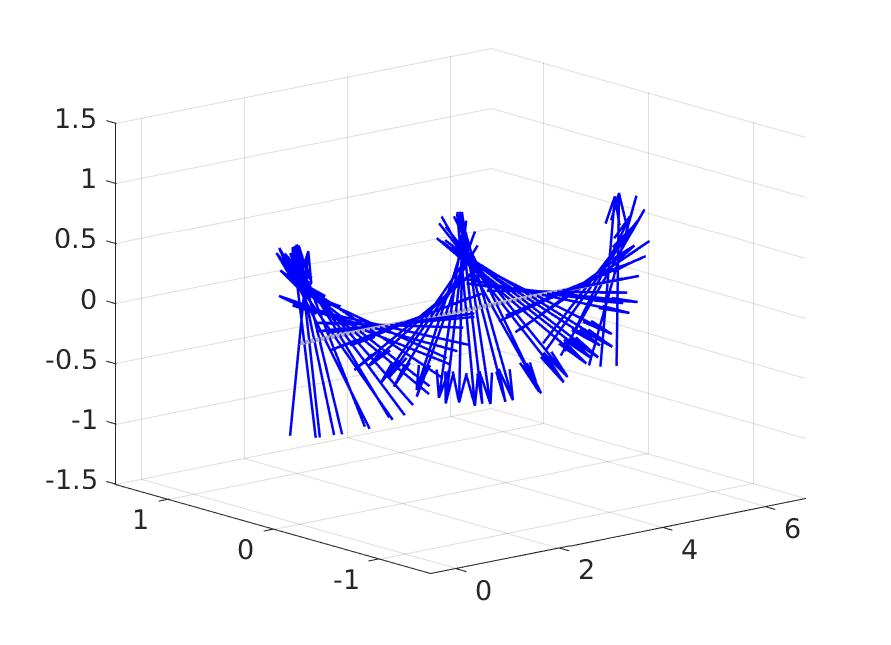} \\
\includegraphics[width=4.3cm]{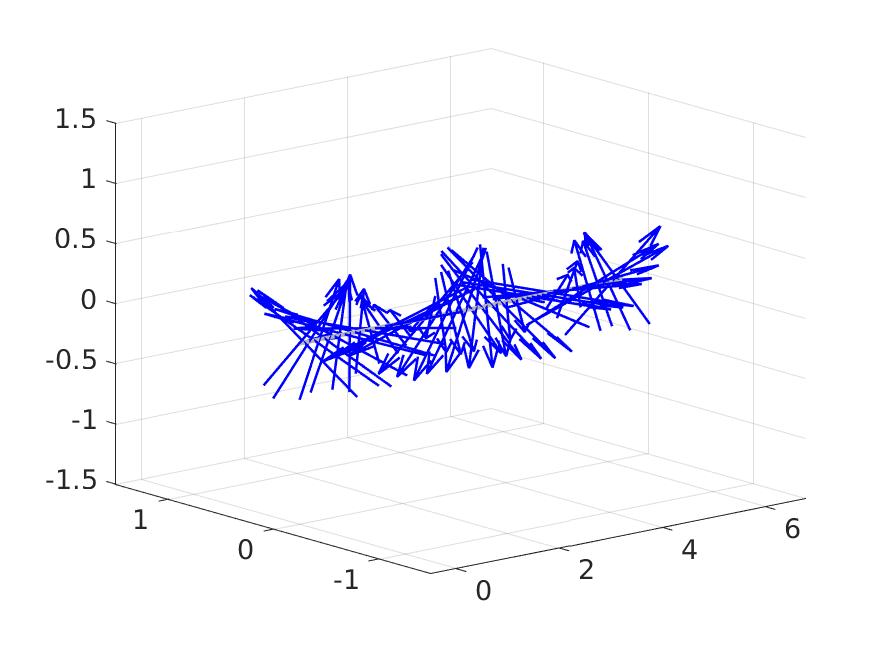} 
\includegraphics[width=4.3cm]{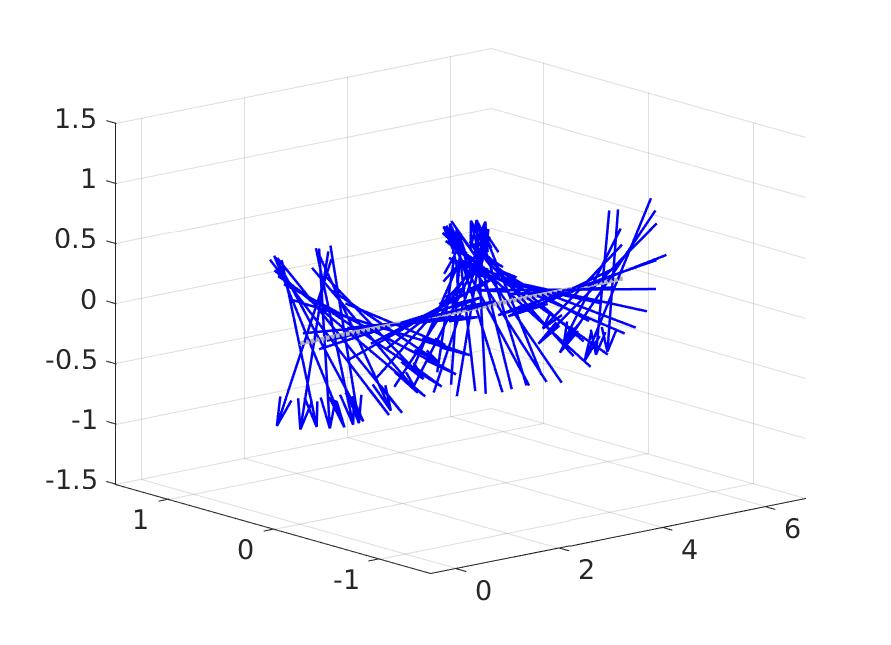} 
\includegraphics[width=4.3cm]{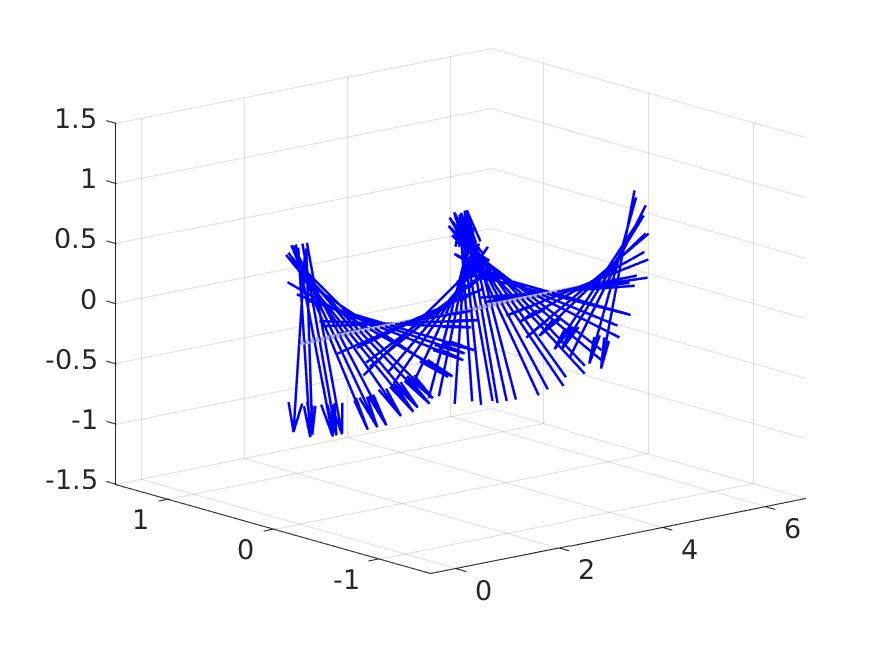} \\
\includegraphics[width=4.3cm]{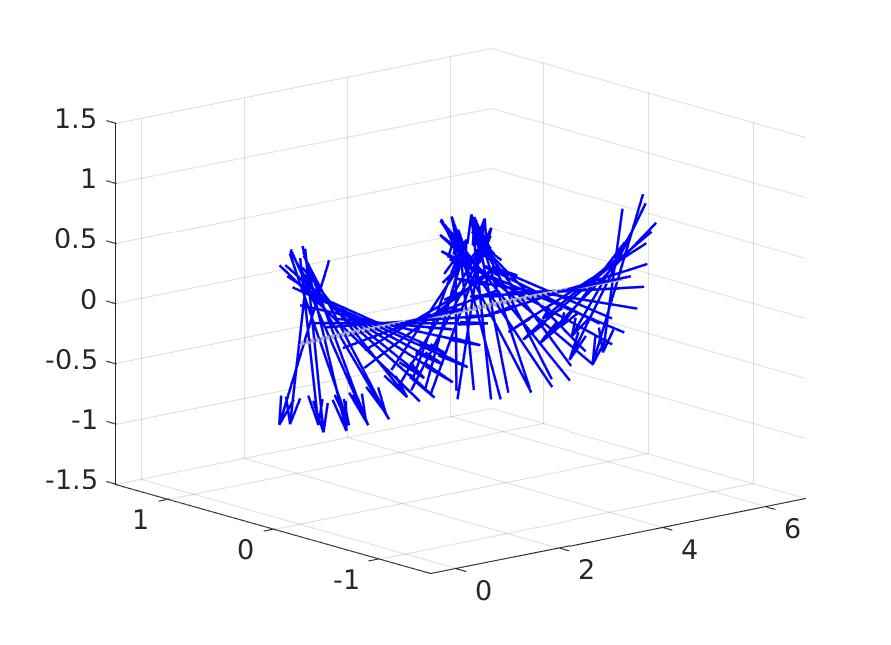} 
\includegraphics[width=4.3cm]{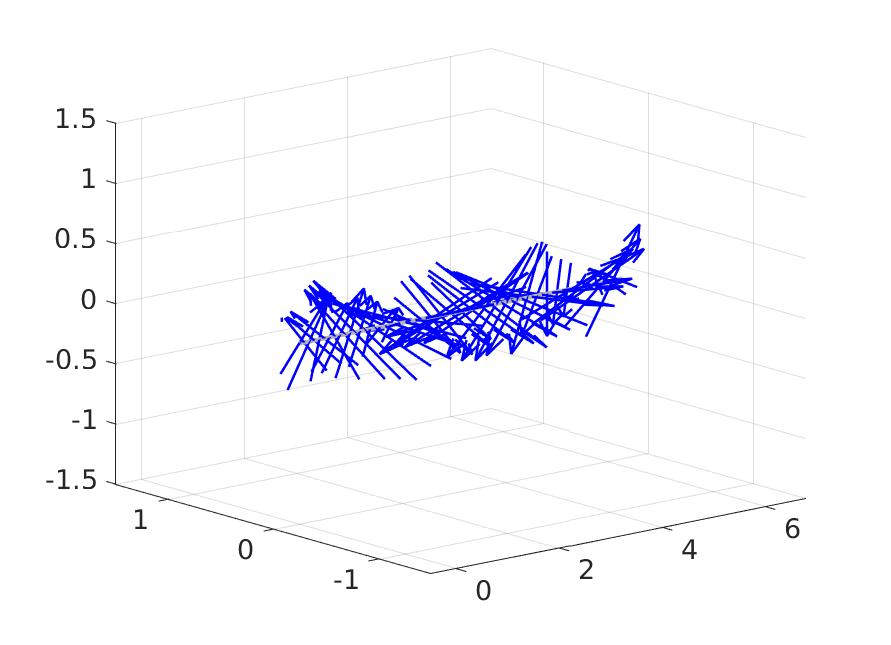} 
\includegraphics[width=4.3cm]{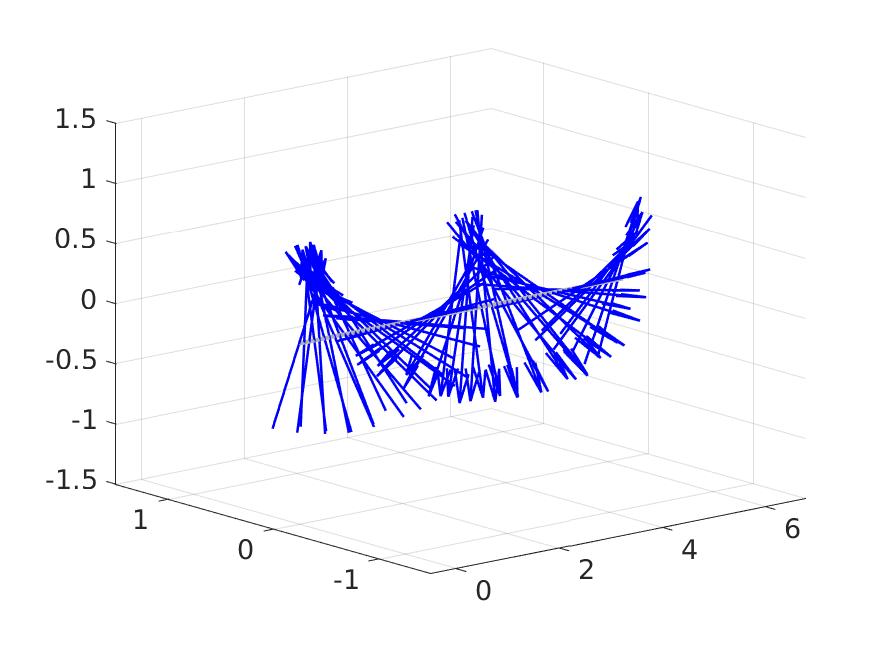} 
\caption{\label{fig:sol_hm_pert} Snapshots of approximations 
at $t_\ell = (\ell/10) T$, $\ell =0,1,\dots,8$ (left to right,
top to bottom), of an evolution resulting from a perturbed harmonic map as
initial data. The approximations oscillate between nearly
stationary states. The approximations were 
obtained with Algorithm~\ref{alg:precess_discr_2}
for $M=64$, $h=2\pi/M$, and $\tau = h/10$.}
\end{figure}


\subsection{Fractional harmonic map heat flow}
We next experimentally investigate the fractional harmonic map heat flow in
two-dimensional domains. We consider the integral fractional Laplacian as defined 
in \eqref{eq:integ}. For its discretization we follow 
\cite{GAcosta_FMBersetche_JPBorthagaray_2017a} and replace the unbounded domain
$\R^d$ by a bounded set $\tO$ with $\overline{\O}\subset \tO$, this defines a
discrete fractional Dirichlet energy $E_{s,h}$ and a corresponding bilinear form.
Our example enforces a singularity via smooth but topologically nontrivial boundary conditions which
is implemented via an additive decomposition of the unknown. 
Alternative approaches for imposing the exterior boundary condition 
are discussed in~\cite{HAntil_RKhatri_MWarma_2019a,HAntil_DVerma_MWarma_2020a,GAcosta_JPBorthagaray_NHeuer_2019a}.
The treatment of the linearized constraints follows~\cite[section~7.2.5]{Bart15-book}. 
In both examples below, as the exterior data we use a function $\vn$ with 
\[
\vn(x) = \frac{x}{|x|}
\]
for $x\in \p\O$. Our initial vector fields where obtained via normalizations of certain
random vectors at the inner nodes of the triangulations. We always used the step size
$\tau = 2h$ and as stopping criterion the condition
$\|d_t u_h^k\|_{s,h} < 10^{-6}$. 

\begin{example}\label{ex:defect}
We let $d = N= 2$, and consider the square $\Omega = (-0.5,0.5)^2$ or the disk $\O = B_{0.5}$.
As extended domain $\widetilde\Omega$ we choose a ball of radius $r=1.5$ centered at the origin.
We use an unstructured 
triangulation for $\widetilde\O$ generated using the package~Gmsh \cite{geuzaine2009gmsh}
which extends an unstructrued triangulation of $\O$. 
\end{example}

Figure~\ref{f:energy} displays the discrete energies $E_{s,h}[u_h^k]$, $k=0,1,\dots,K$, 
of the iterates $u_h^k\in \cS^1(\cT_h)^N$ for different fractional parameters $0<s<1$ 
and fixed mesh size $h=0.025$. The results confirm the theoretically established
energy decay property of Algorithm~\ref{alg:l2_flow_discr}. In particular,
a rapid initial energy decay is followed by a slower further reduction of
the energy before the process becomes nearly stationary.   
Figure~\ref{f:u_02_1566} illustrates a corresponding discrete evolution for the case $s=0.2$
and the mesh size $h=0.025$ via snapshots of the iterates provided by Algorithm~\ref{alg:l2_flow_discr}.
We observe that the initial discontinuity of the initial function along parts of the 
boundary is quickly removed and a slightly diffused point defect develops which 
moves towards the center of the domain during the evolution. 

Figures~\ref{f:u} and~\ref{f:u_circ} show nearly stationary configurations $u_h$, i.e.,  
nearly discrete fractional harmonic maps on the square and on the disk, for the values $s=0.2$, $s=0.4$, and 
$s = 0.6$, as well as decreasing mesh sizes $h = 0.048$, $h= 0.033$, and $h=0.025$ in the
case of the square. For larger values 
of $s$, we clearly observe well localized point defects. For the choice $s = 0.2$ we find that the defect
is smeared out over a neighborhood of the origin in which the numerical solution is irregular 
and whose diameter appears to decay to zero 
as $h \rightarrow 0$. Owing to limitations in the spatial resolution and the occurring topological singularity 
we are unable to identify an experimental convergence behavior to the canonical solution
candidate $u(x) = x/|x|$, cf.~\cite{Lin87}. 
However, for both cases of domains we obtain numerical solutions that appear to be
very close to this vector field.

\begin{figure}[p]		
	\includegraphics[width=0.38\textwidth]{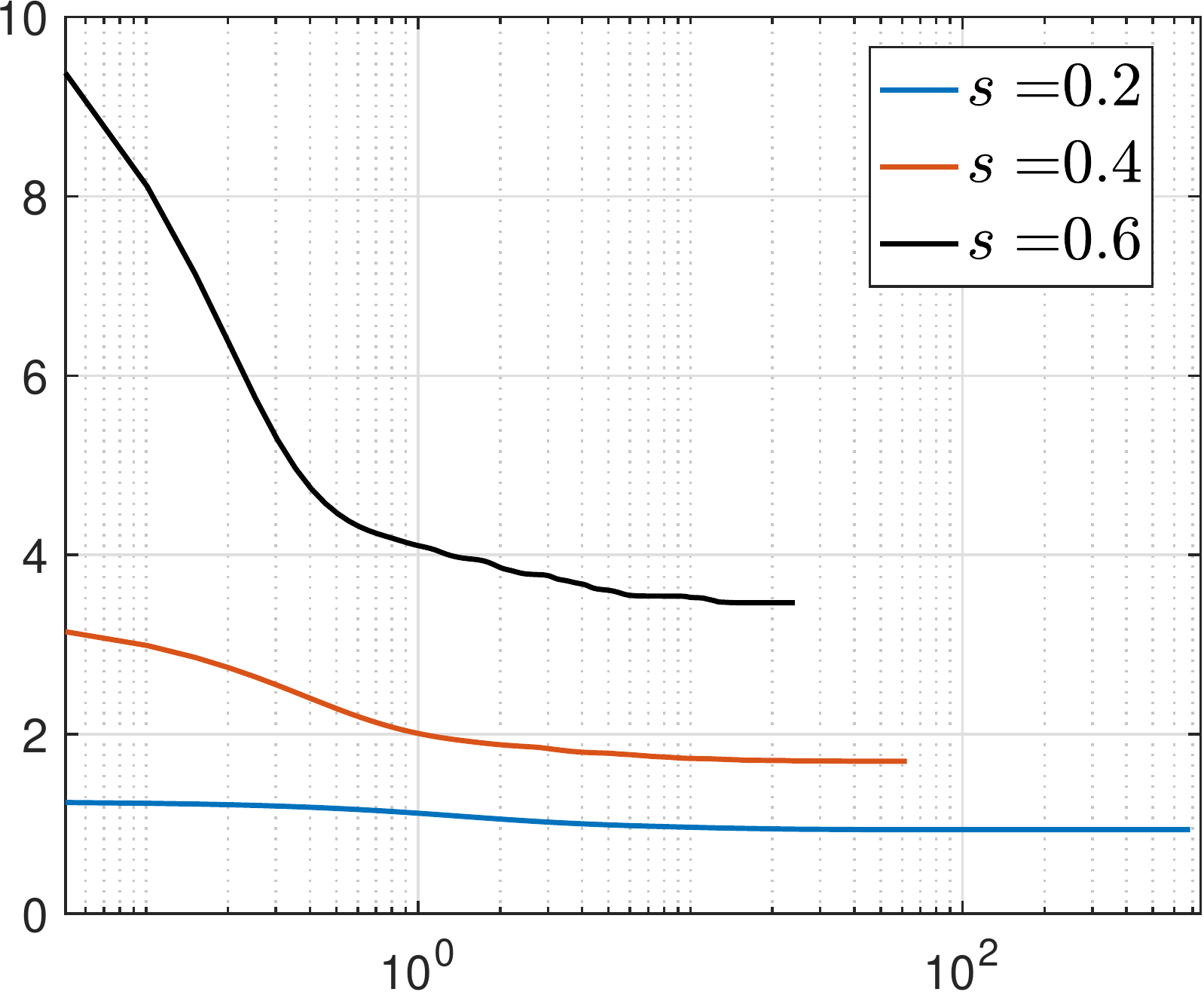}	
	\caption{\label{f:energy} Discrete fractional energies $E_{s,h}[u_h^k]$, $k=0,1,2, \dots$, with
	respect to the time~$t^k$, for a fixed mesh with meshsize $h=0.025$,
        and different values of $s$ in Example~\ref{ex:defect} with $\O= (0.5,0.5)^2$.
        For all values of $s$ an energy decay property is confirmed.}
\end{figure}

\begin{figure}[p]	
	\begin{tabular}{ccc} \medskip 
	\includegraphics[width=0.21\textwidth]{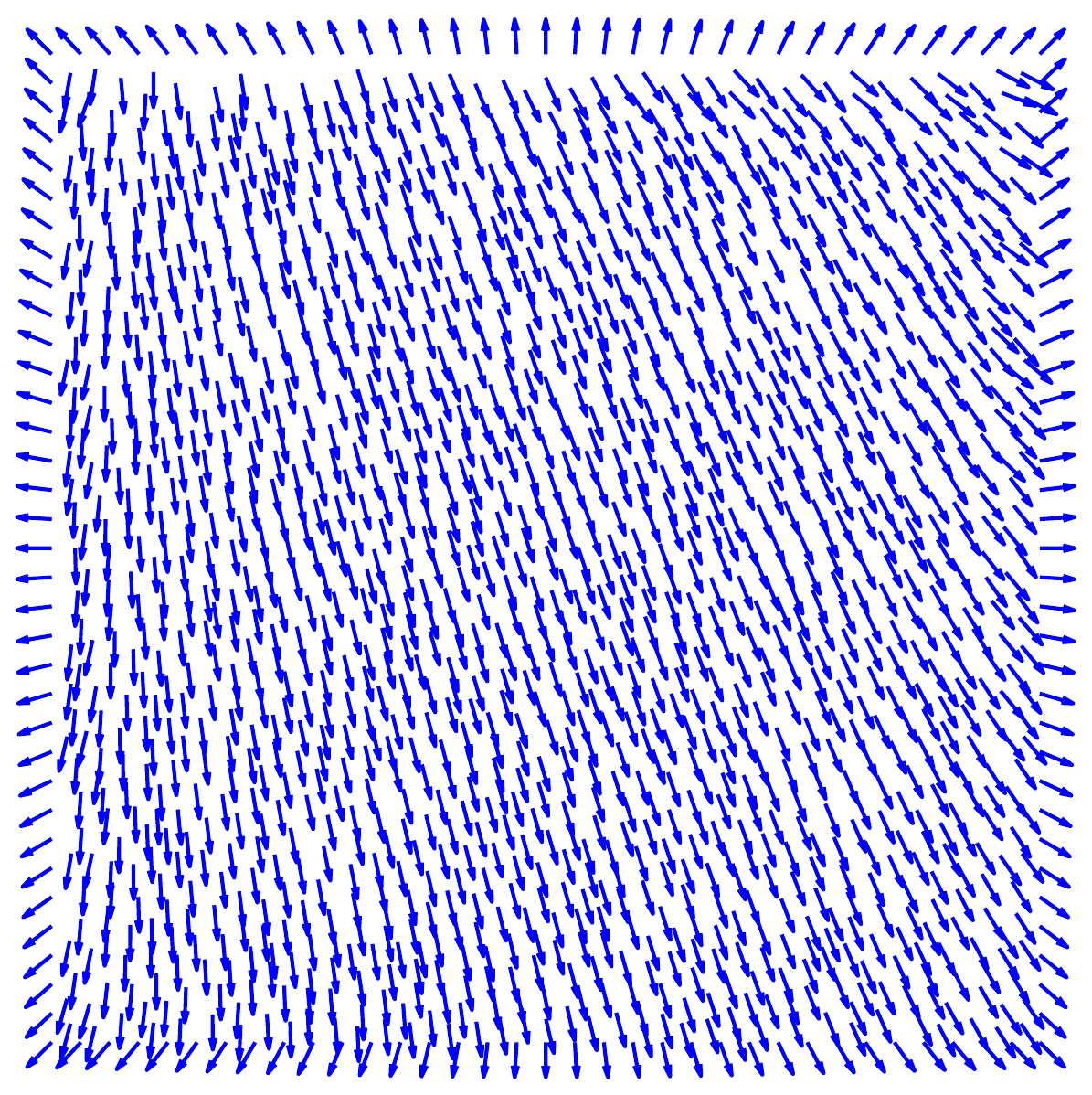} &
	\includegraphics[width=0.21\textwidth]{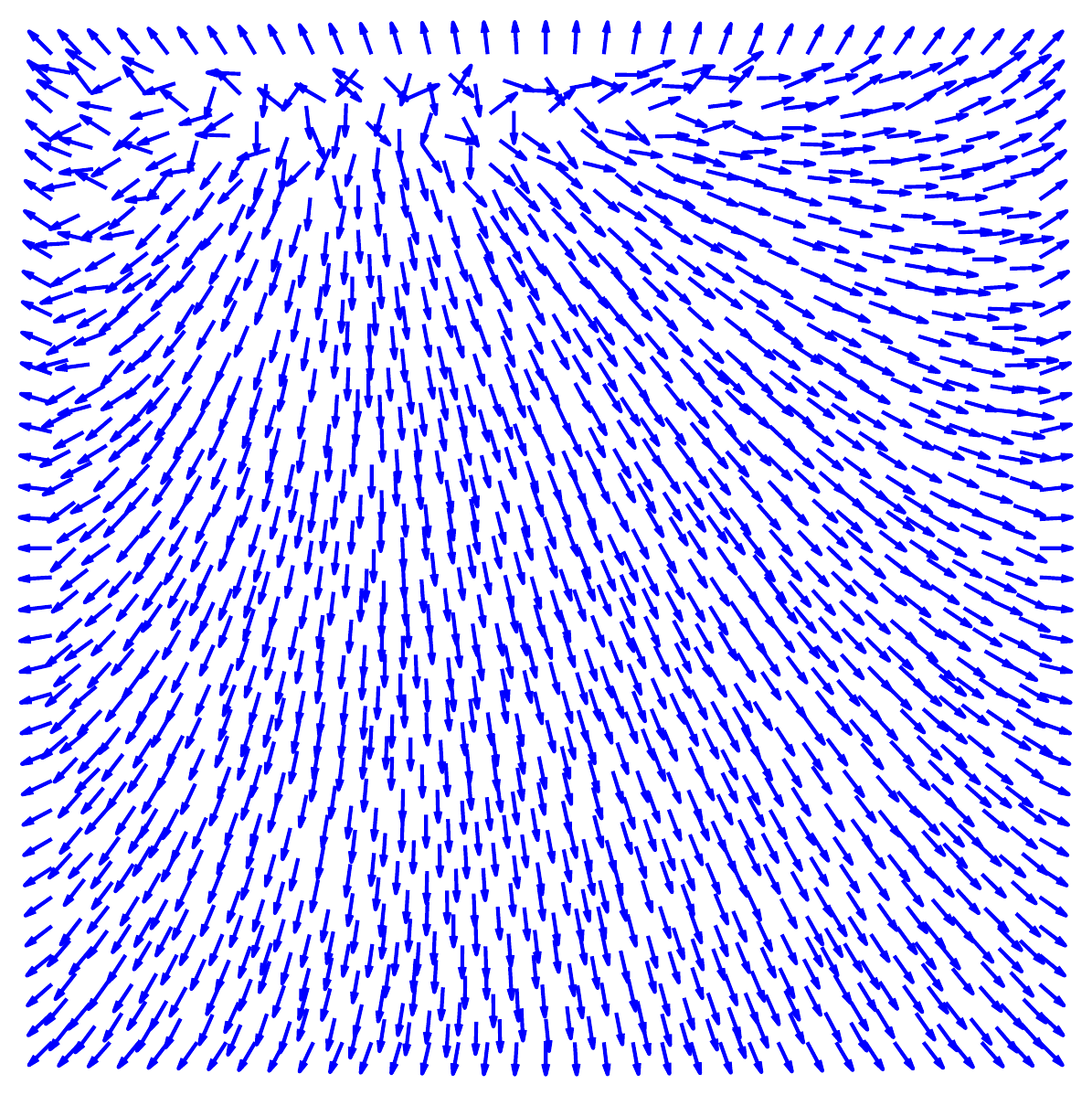} &
	\includegraphics[width=0.21\textwidth]{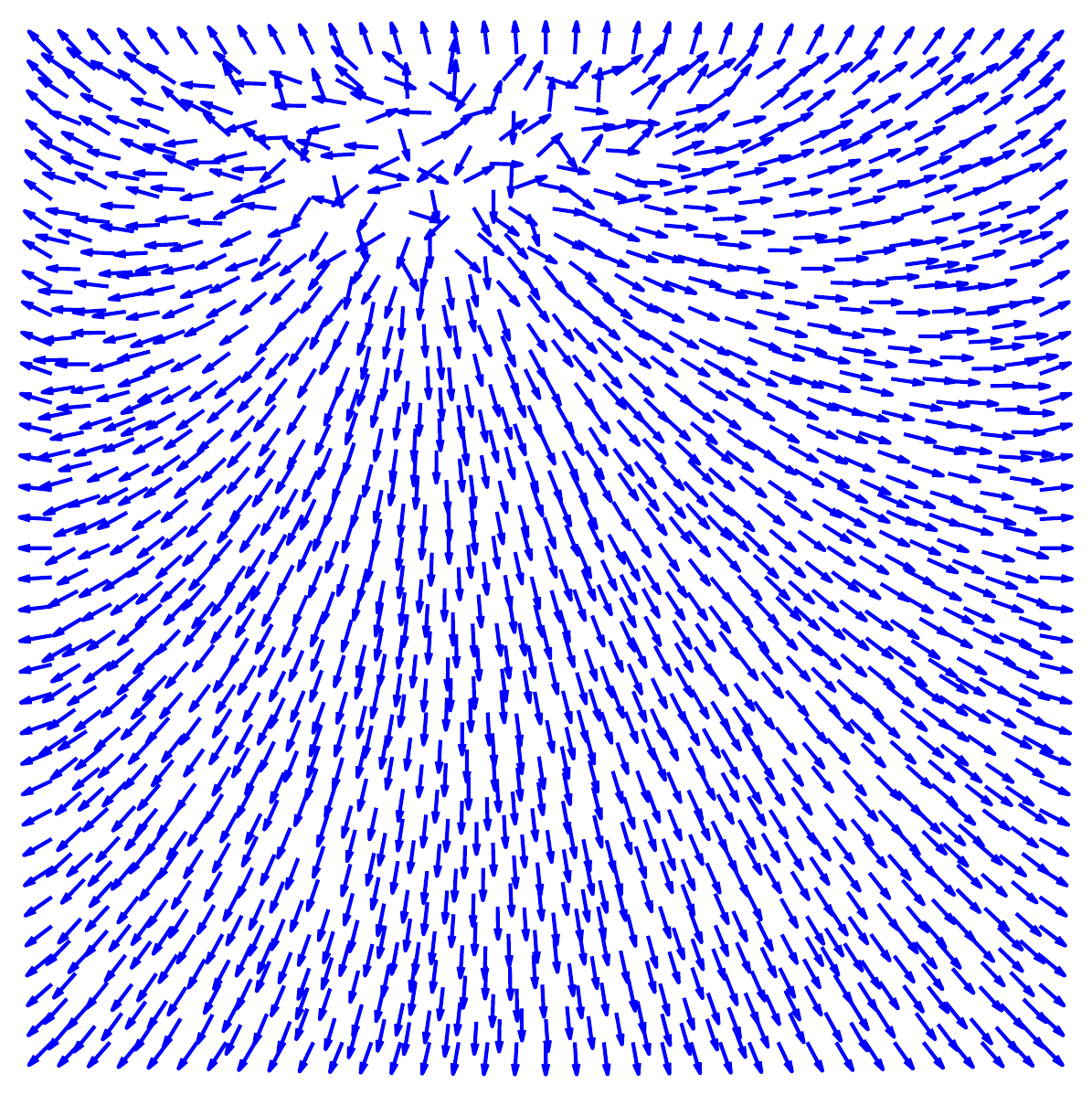} \\ \medskip 
	\includegraphics[width=0.21\textwidth]{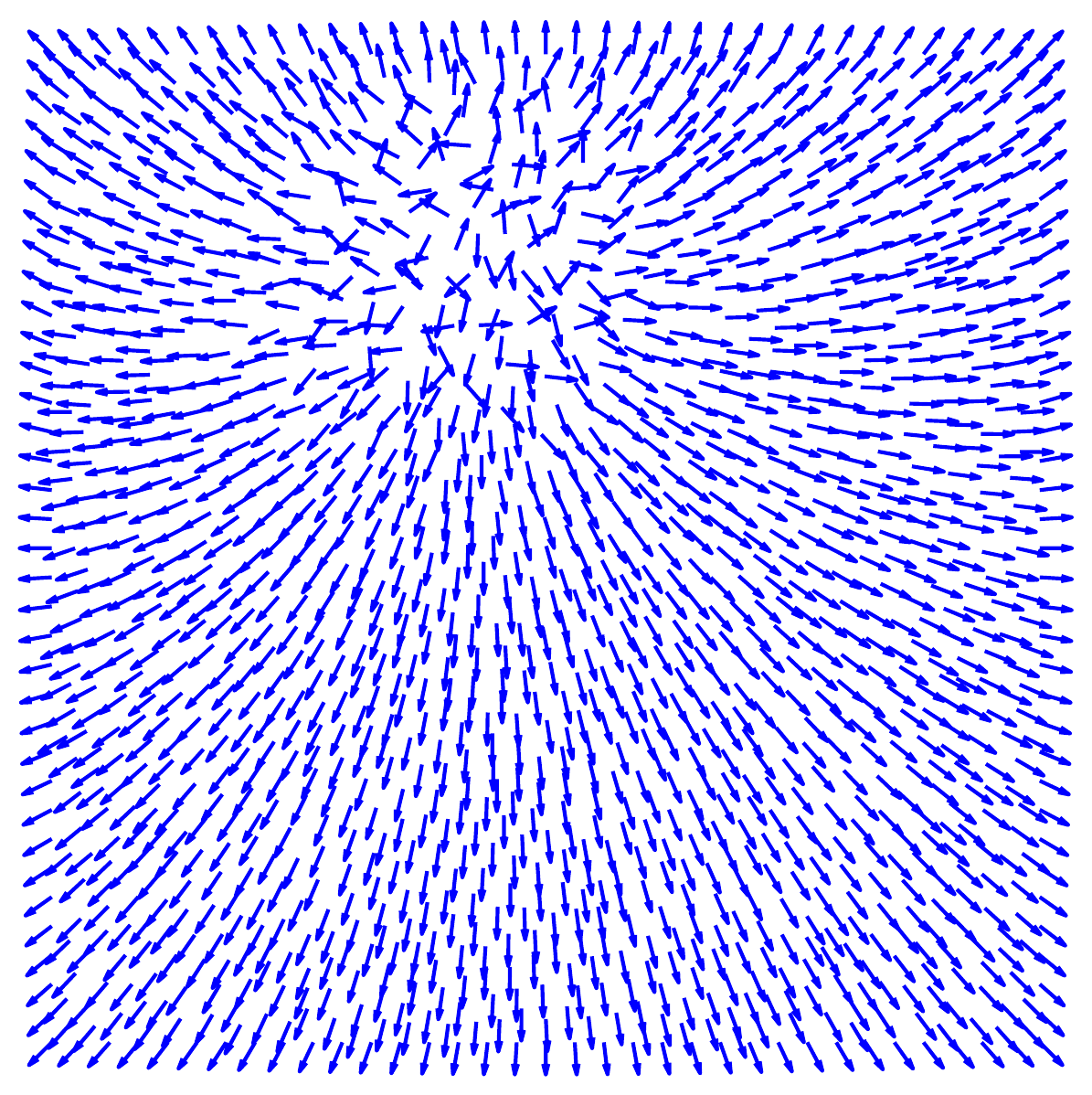}	&
	\includegraphics[width=0.21\textwidth]{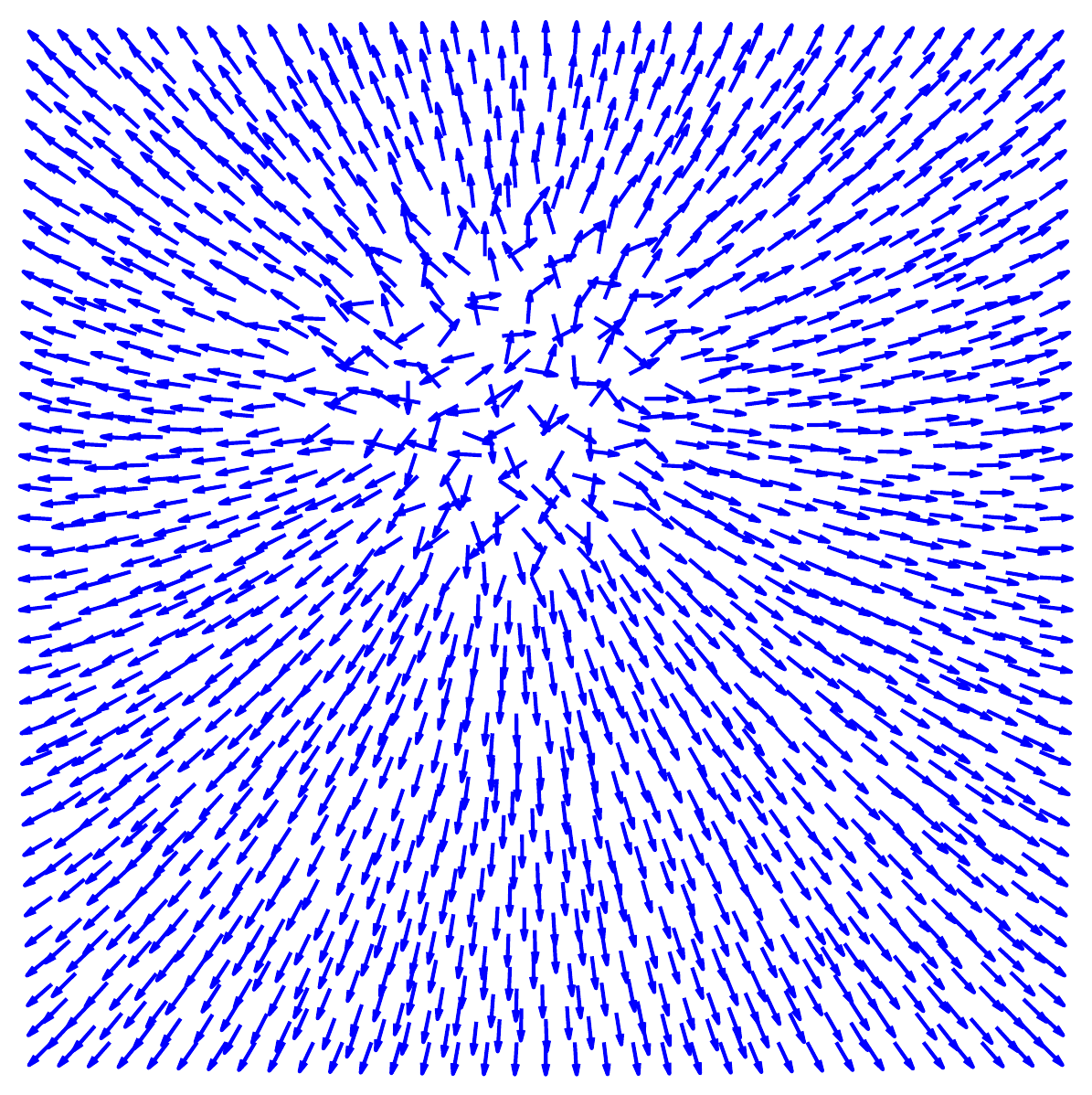} &
	\includegraphics[width=0.21\textwidth]{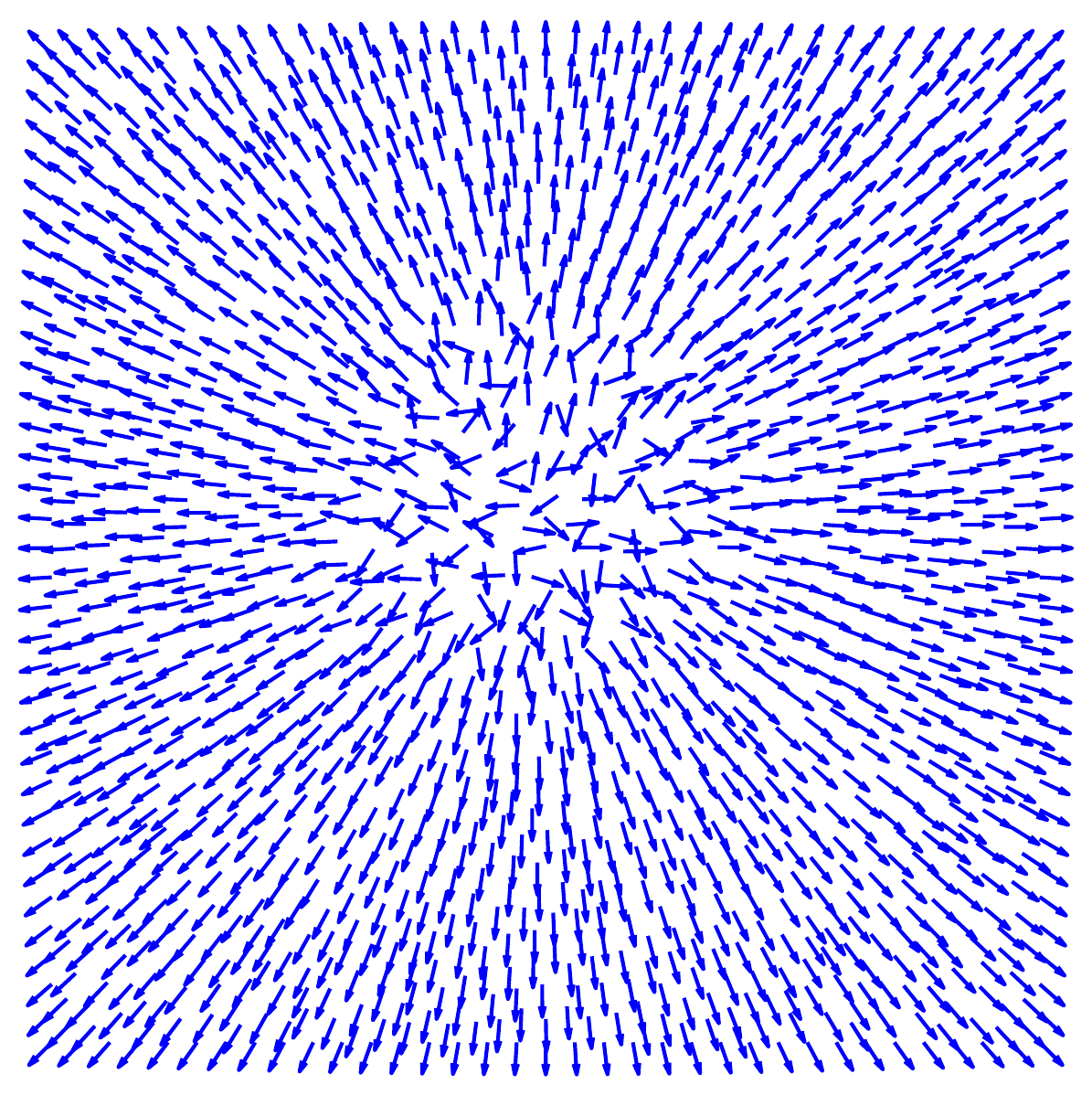} \\ 
	\includegraphics[width=0.21\textwidth]{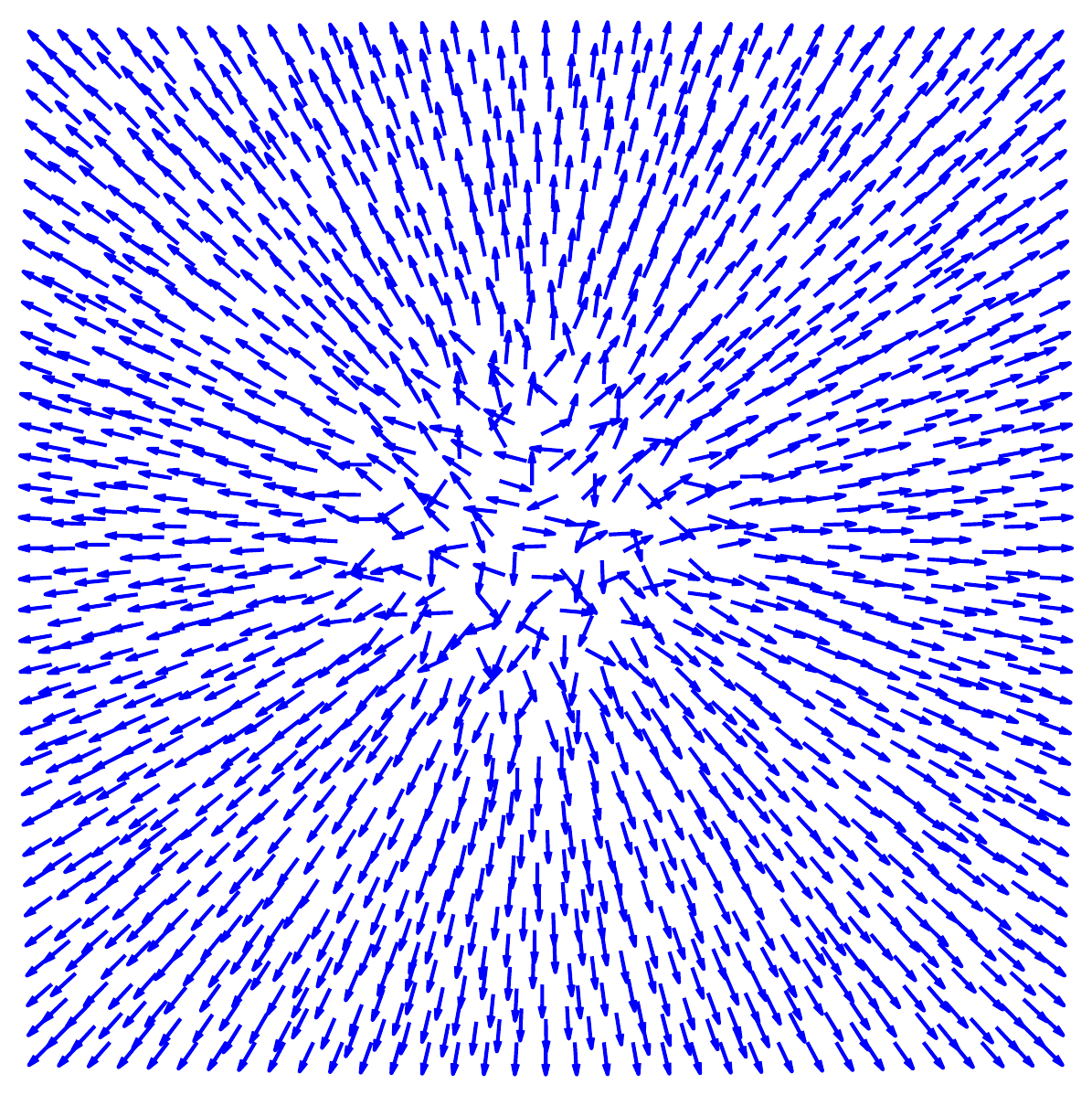}	&
	\includegraphics[width=0.21\textwidth]{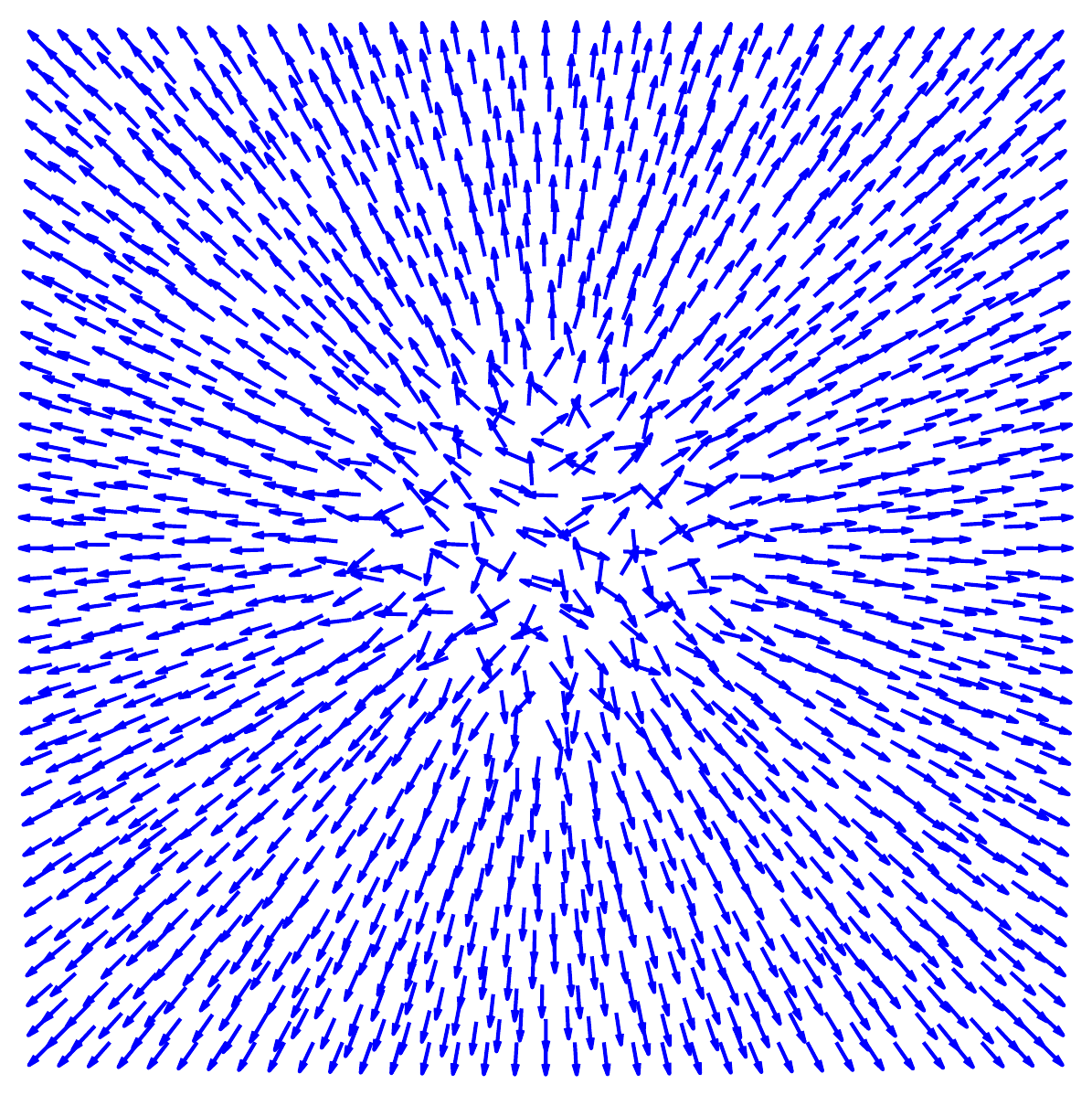}  &
	\includegraphics[width=0.21\textwidth]{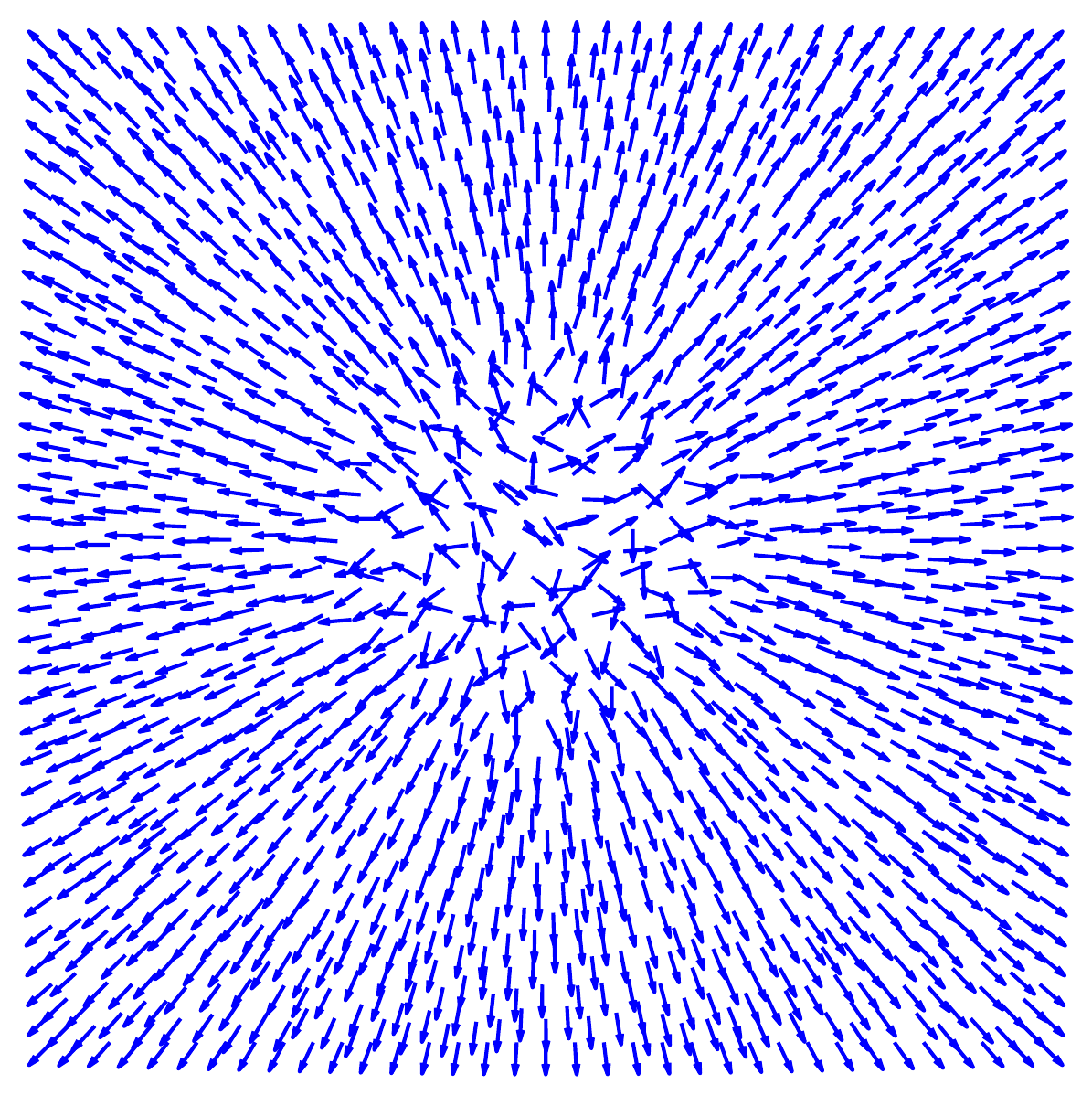} 	\\ 
	\end{tabular}
	\caption{\label{f:u_02_1566}  
        Snapshots of a discrete fractional harmonic map heat flow with $s=0.2$ and $h=0.025$ via approximations
        $u_h^k$ for $t^k=0.51, 3.03, 6.07, 12.13, 22.74, 45.49, 253.21, 510.96, 685.32$ (left to right, top to bottom) in Example~\ref{ex:defect}.
        An initial
        discontinuity at the boundary is regularized and the formation of a point defect that moves 
        to the origin is observed. 
	}
\end{figure}

\begin{figure}[p]	
	\begin{tabular}{ccc} \medskip 
	\includegraphics[width=0.21\textwidth]{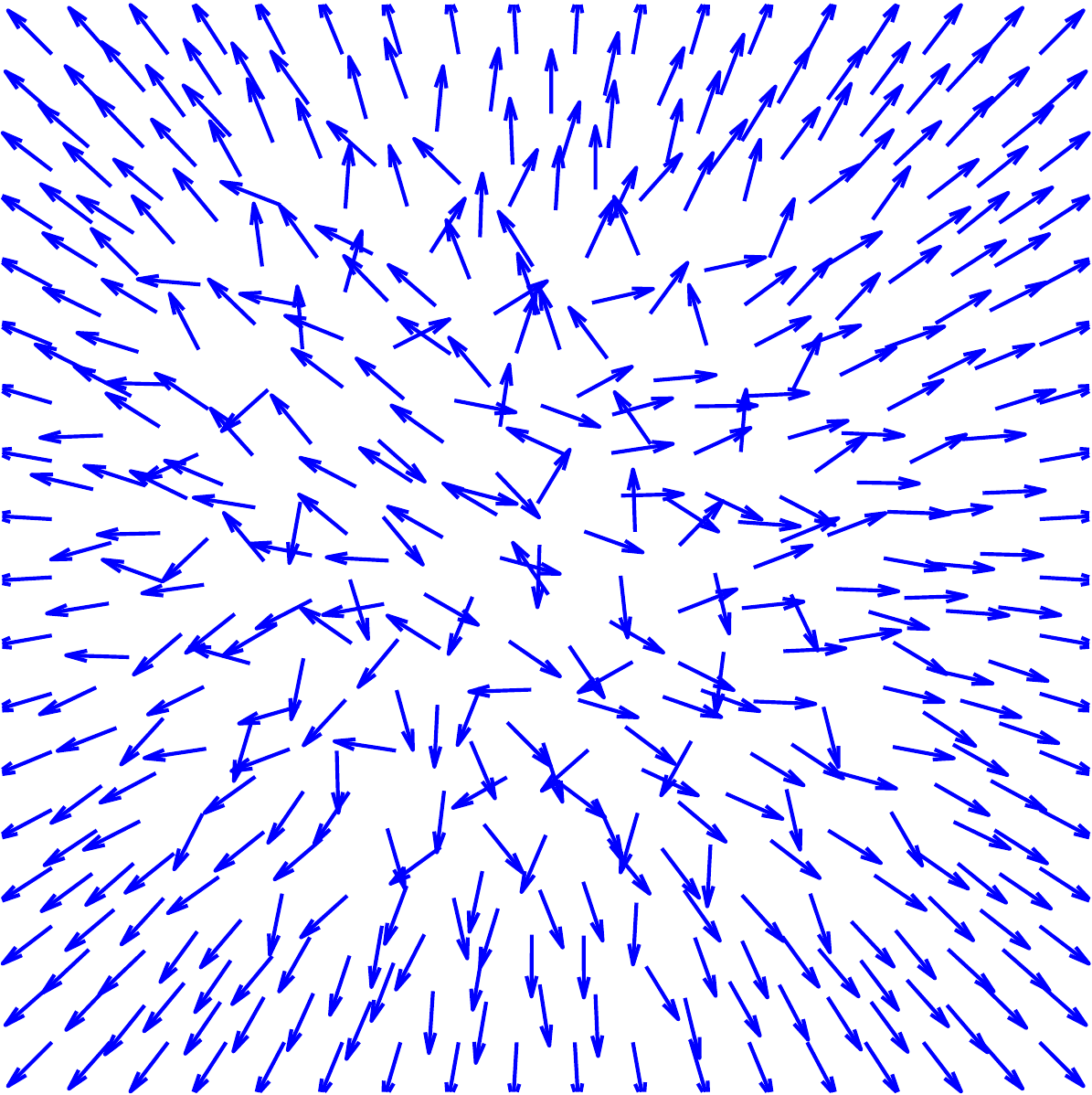} &
	\includegraphics[width=0.21\textwidth]{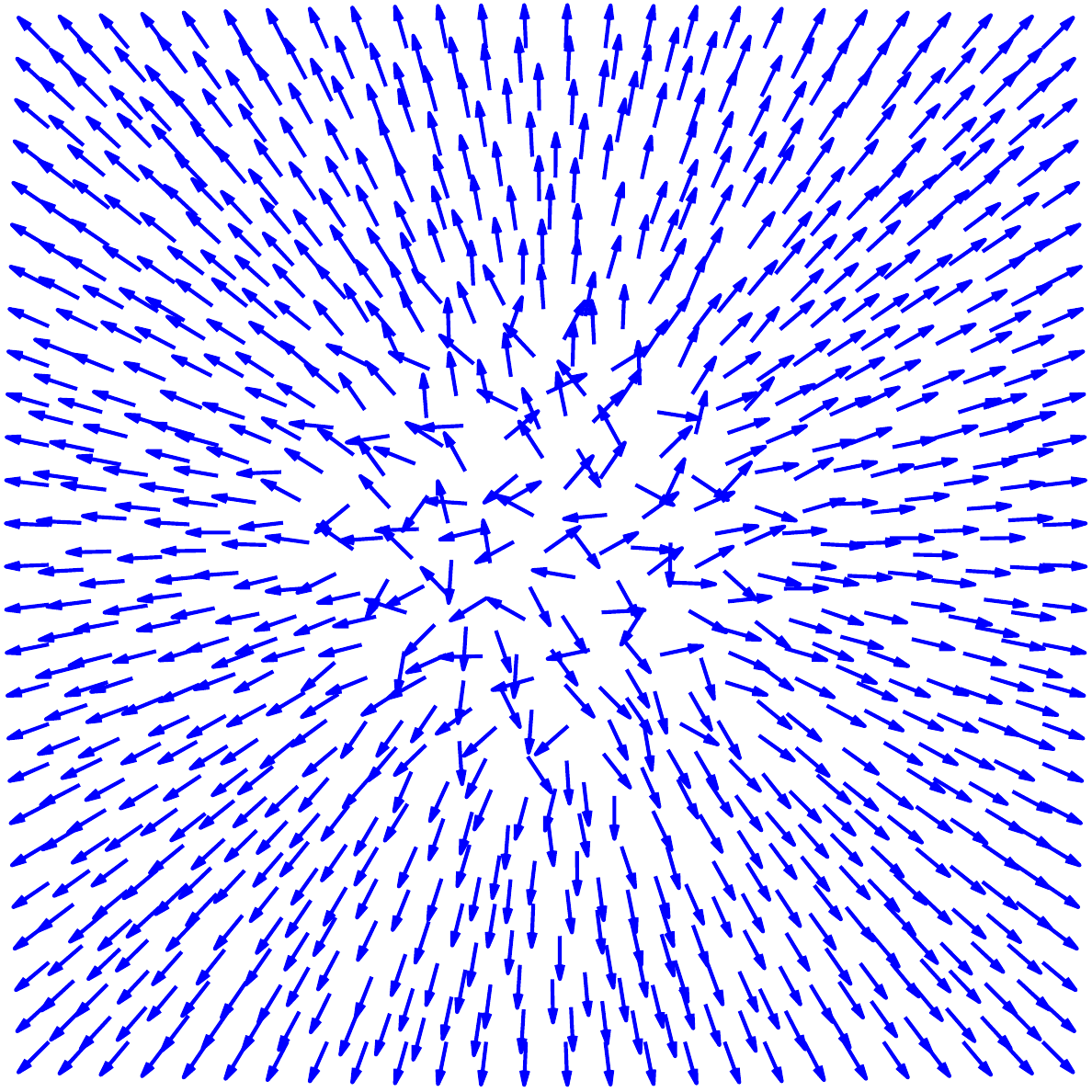} &
	\includegraphics[width=0.21\textwidth]{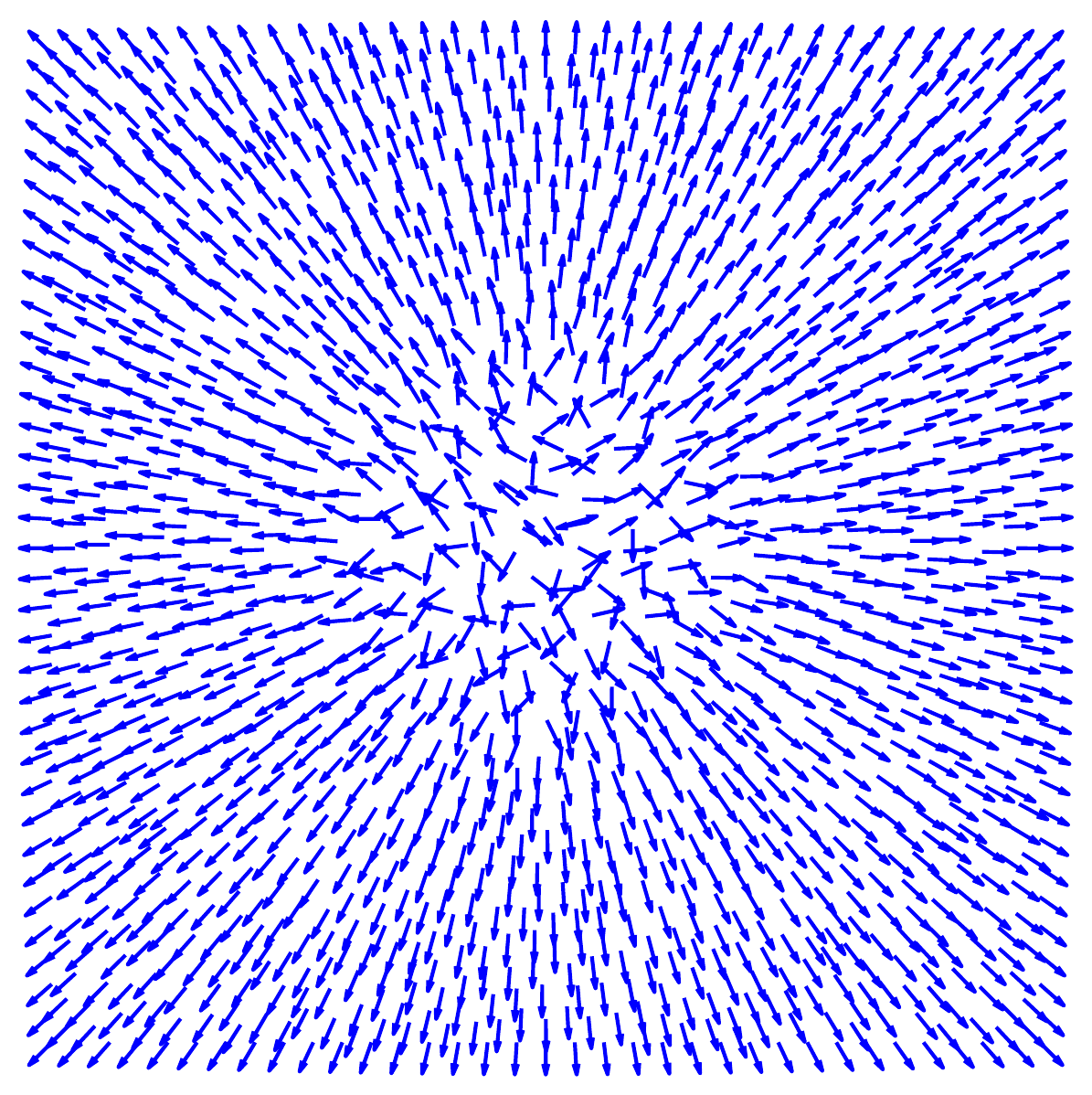} \\ \medskip 
	\includegraphics[width=0.21\textwidth]{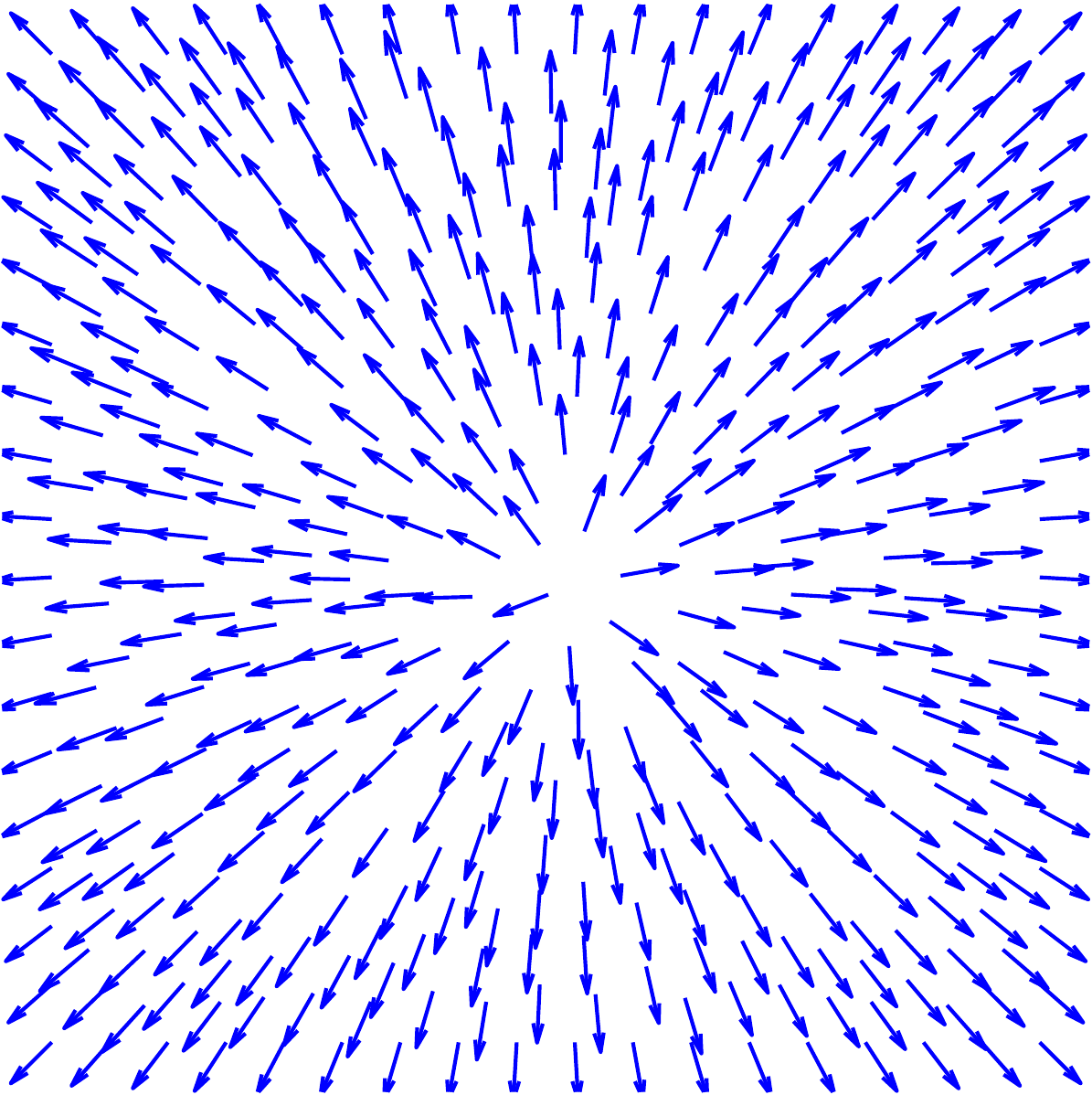}	&
	\includegraphics[width=0.21\textwidth]{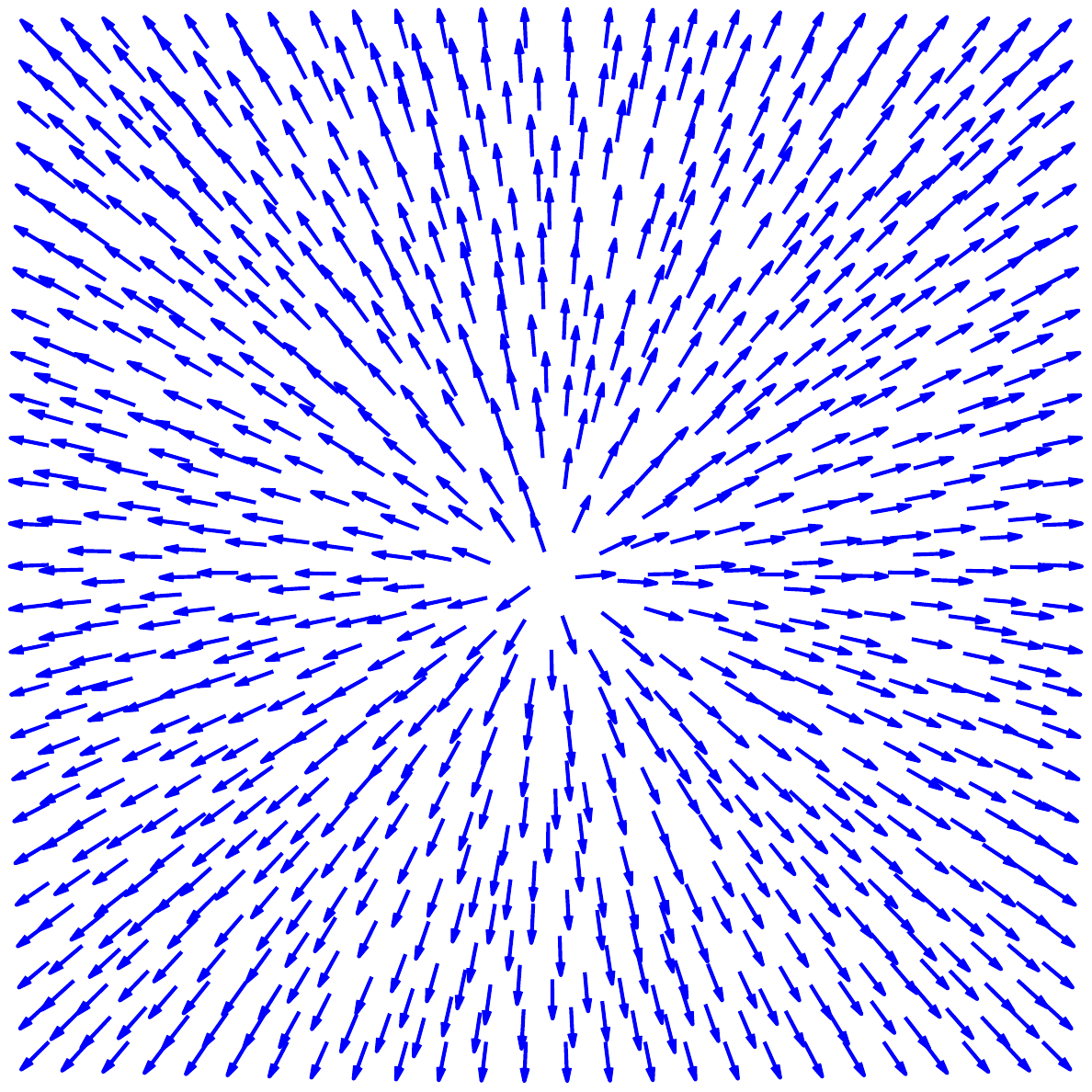} &
	\includegraphics[width=0.21\textwidth]{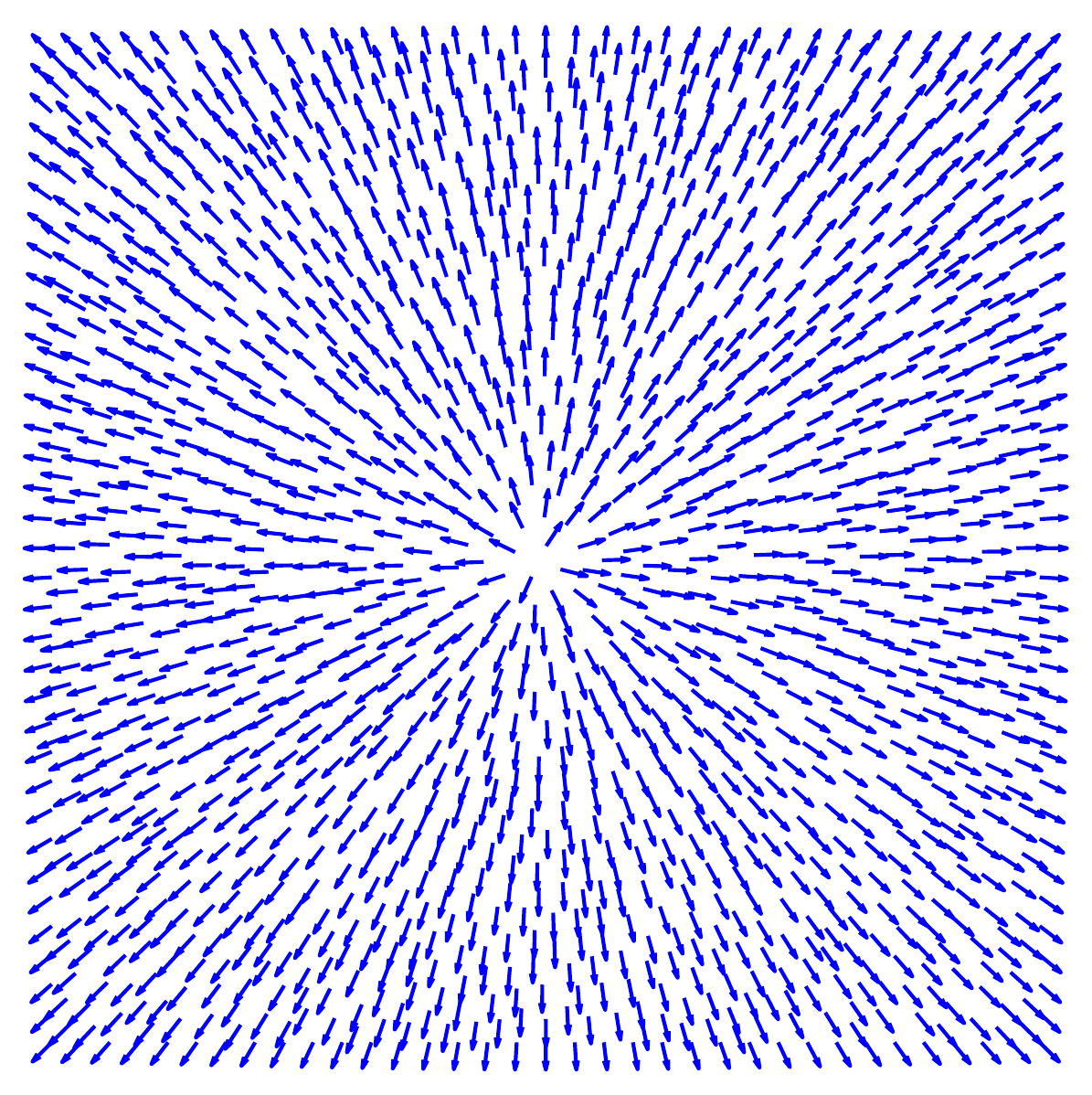} \\ 
	\includegraphics[width=0.21\textwidth]{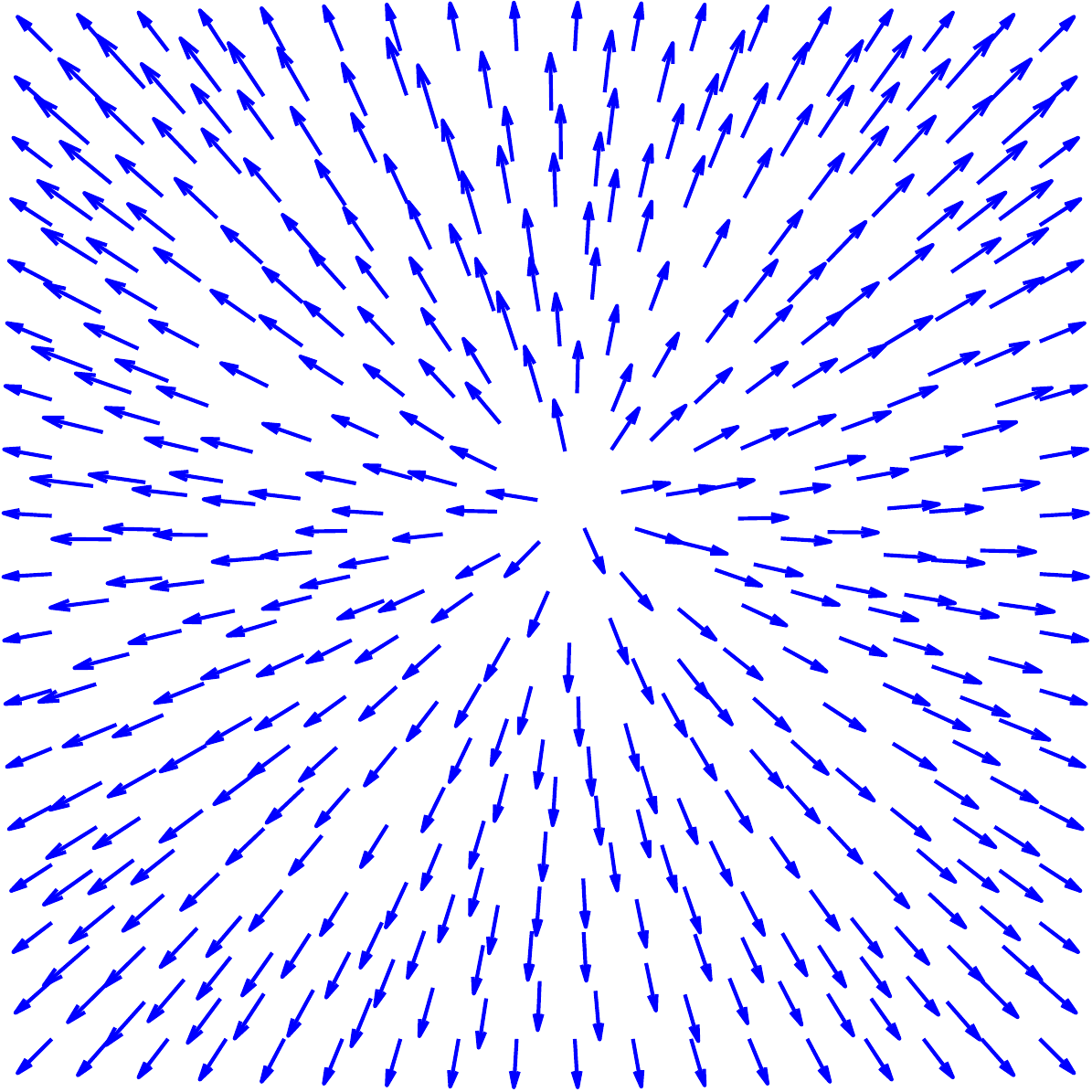}	&
	\includegraphics[width=0.21\textwidth]{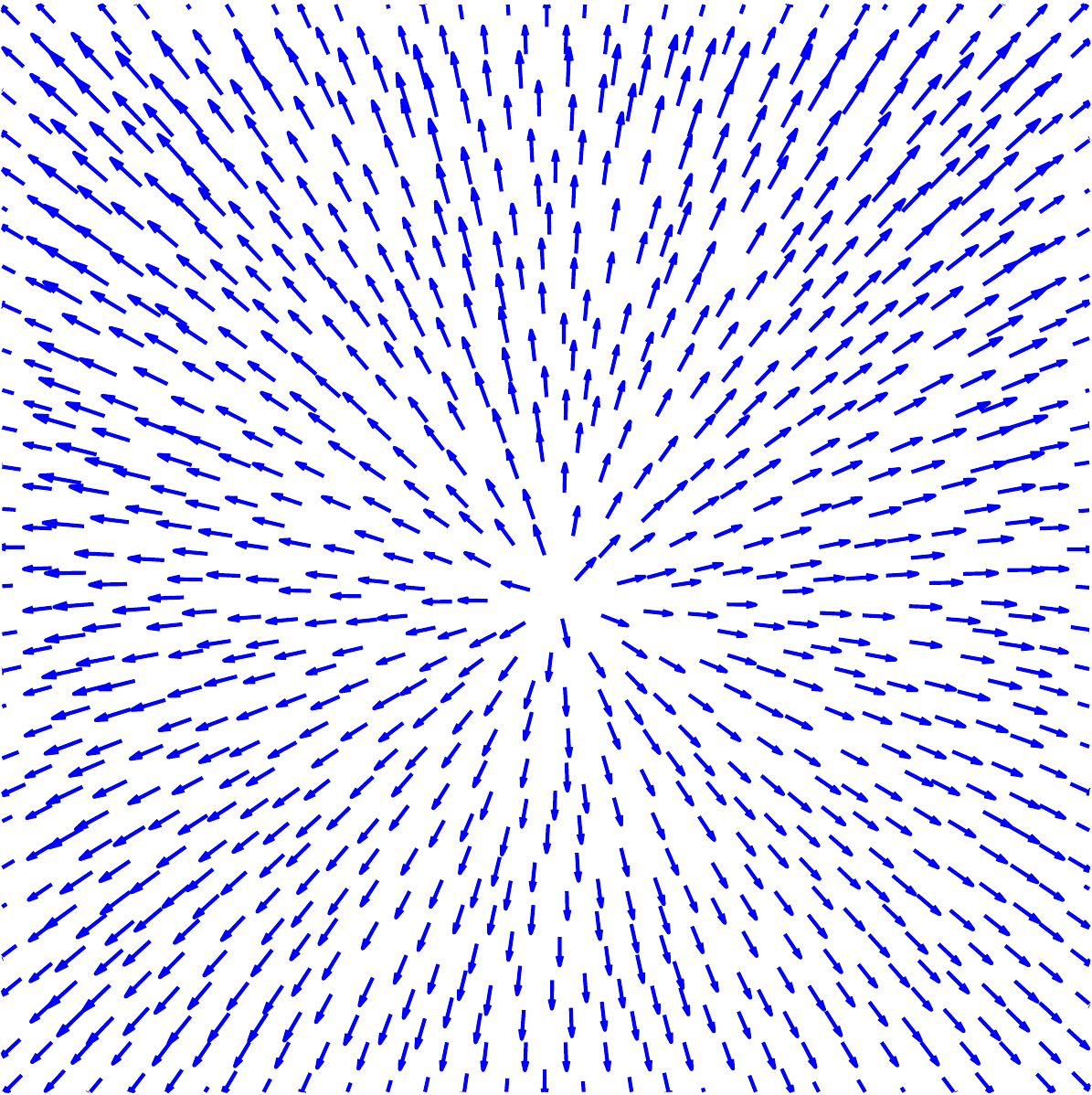}  &
	\includegraphics[width=0.21\textwidth]{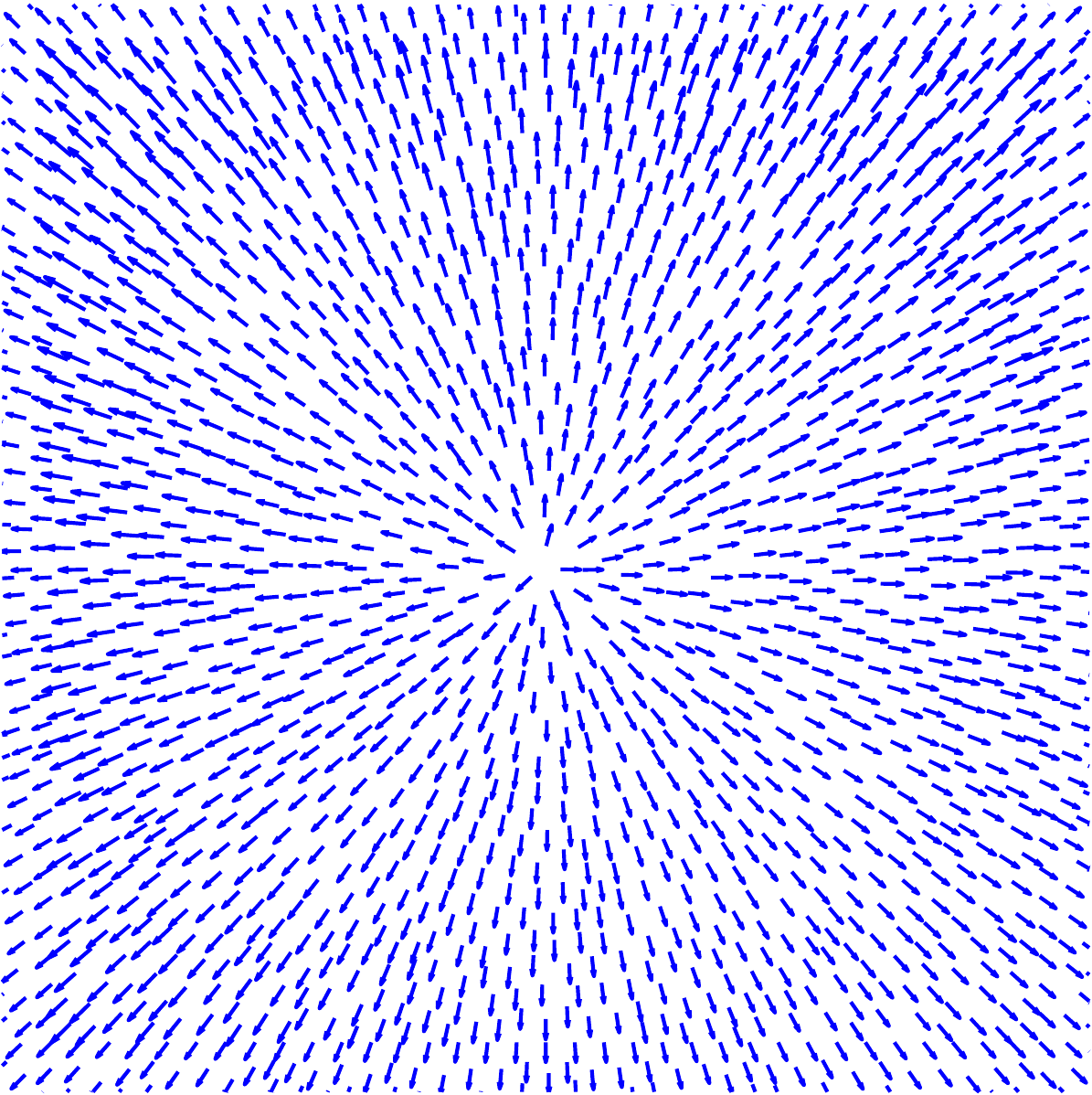} 	\\ 
	\end{tabular}
	\caption{\label{f:u}  ($\O$ square) Approximate discrete fractional harmonic maps $u_h$ in Example~\ref{ex:defect}
          for fractional parameters $s = 0.2$ (top), $s= 0.4$ (middle), $s= 0.6$ (bottom),
          and mesh sizes meshsizes $h = 0.048$ (left), $h= 0.033$ (middle), and $h=0.025$ (right). 
          In all cases a point defect is approximated which is more localized for larger values of 
          $s$ and smaller values of $h$.
	}
\end{figure}

\begin{figure}[p]	
	\begin{tabular}{ccc} \medskip 	
	\includegraphics[width=0.23\textwidth]{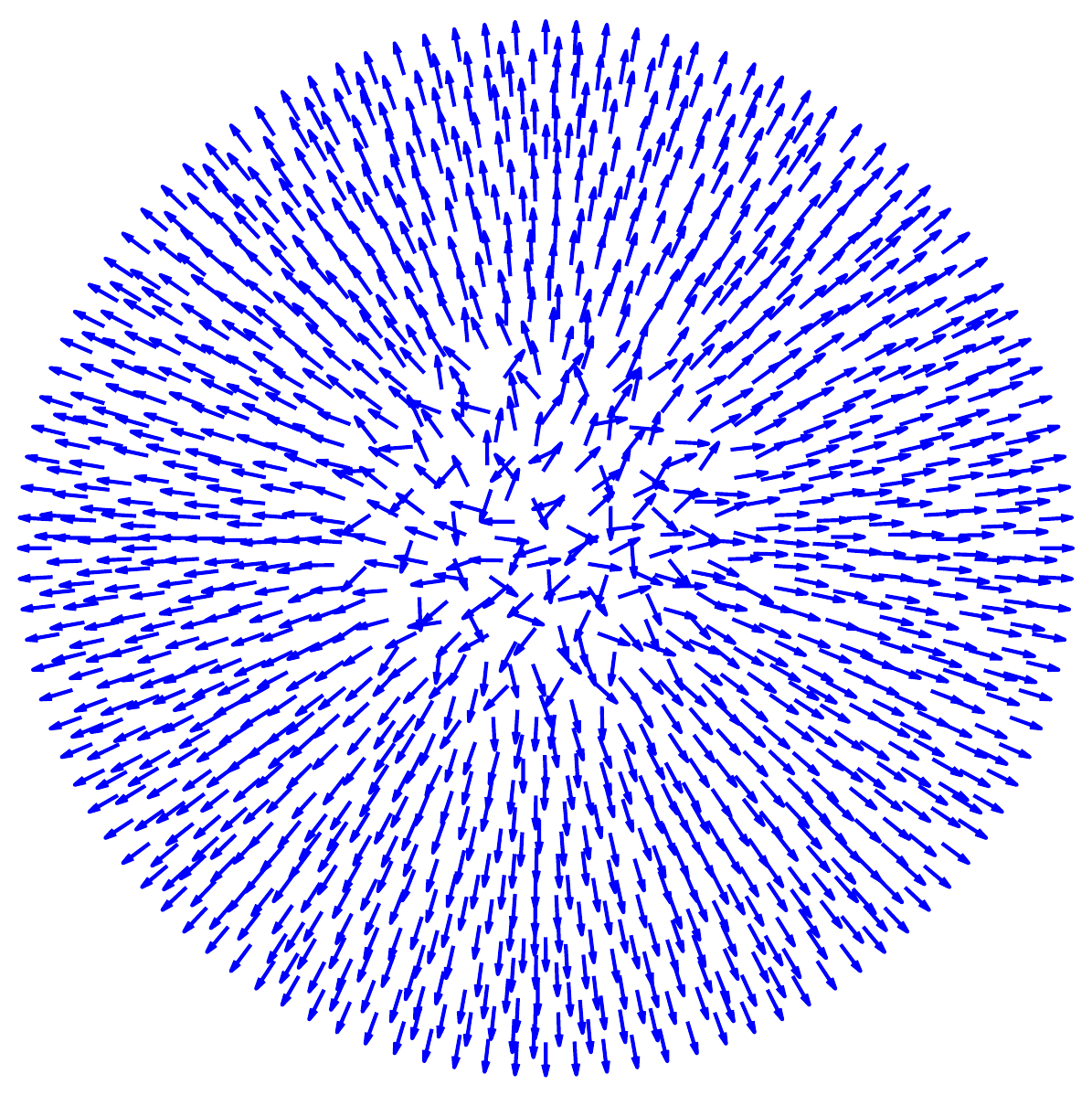} 
	\includegraphics[width=0.23\textwidth]{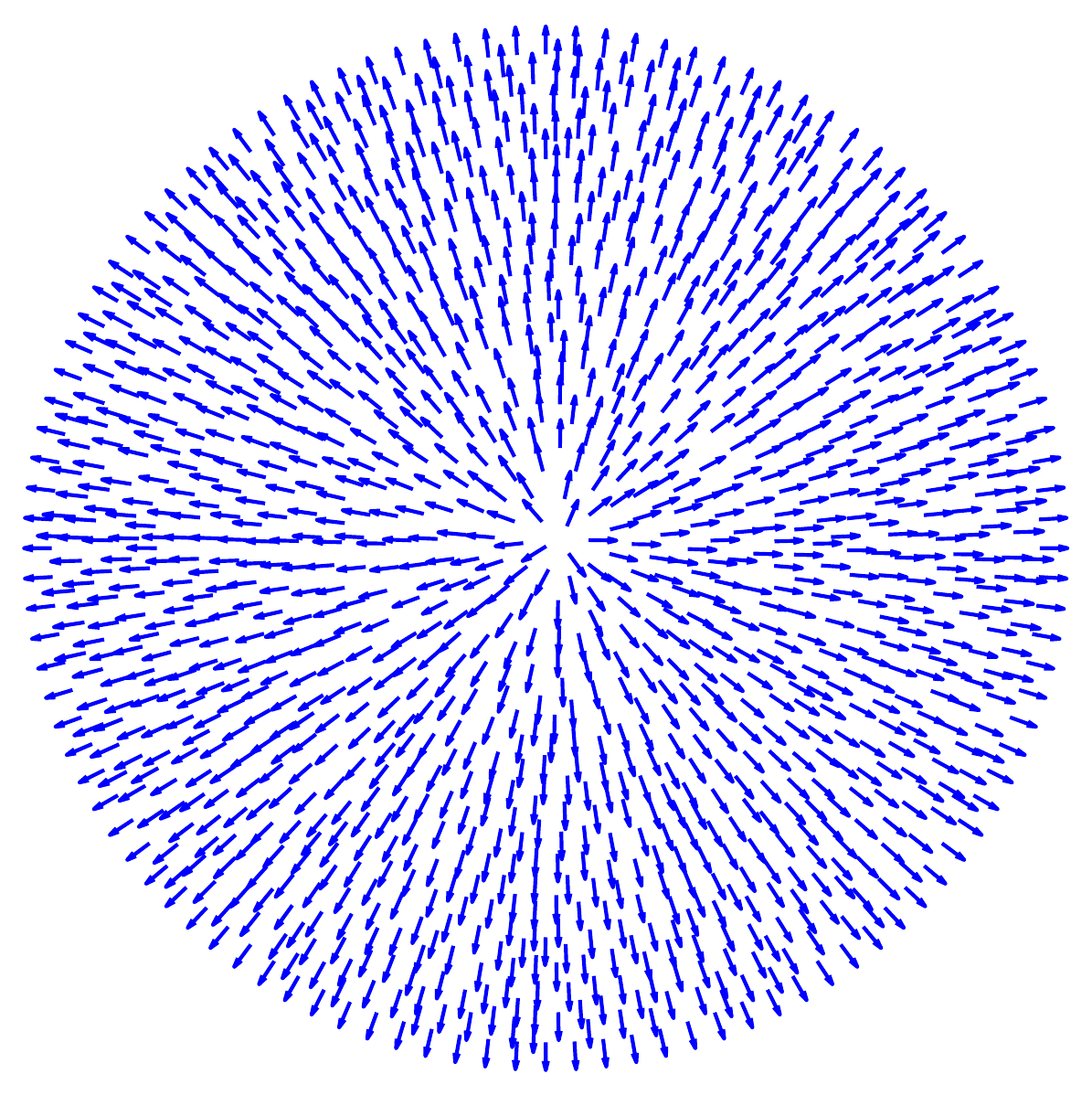} 
	\includegraphics[width=0.23\textwidth]{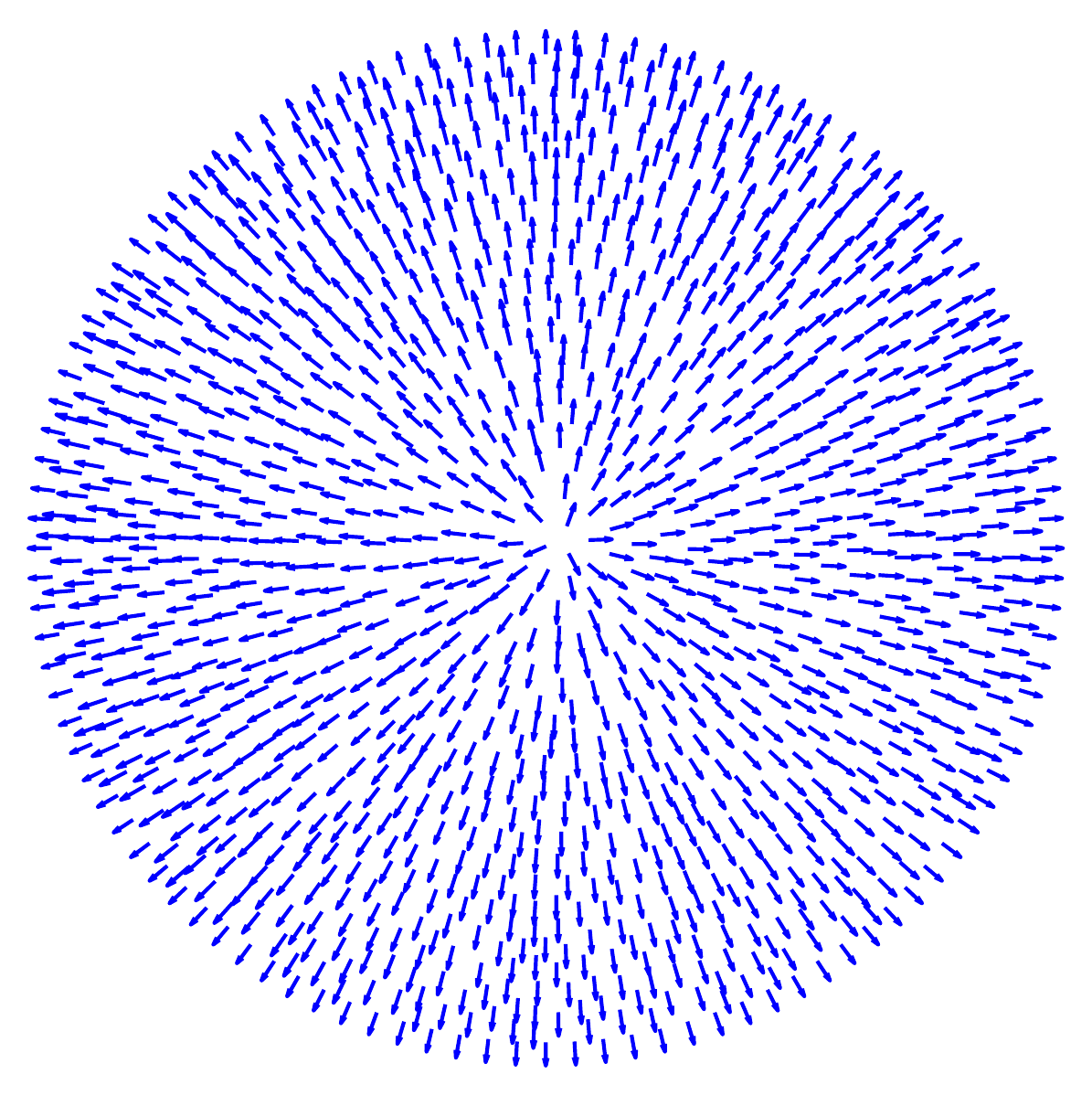} \\ 
	\end{tabular}	
	\caption{\label{f:u_circ}  ($\O$ disc) Approximate discrete fractional harmonic maps $u_h$ in Example~\ref{ex:defect}
          for fractional parameters $s = 0.2$ (left), $s= 0.4$ (middle), $s= 0.6$ (right),
          for a fixed mesh size $h=0.028$. 
          In all cases a point defect is approximated which is more localized for larger values of 
          $s$ and smaller values of $h$.
	}
\end{figure}

\subsection*{Acknowledgments.} The authors are grateful to Enno Lenzmann
for stimulating discussions and valuable hints. 

\appendix


\section{Spectral fractional Laplacian}\label{app:spec} 

In this section, we will prove results analogous to section~\ref{s:compact} but for a different definition of
fractional Laplacian. For any $s\geq0$, consider the fractional order Sobolev space 
\begin{align*}
\mathbb H^s(\Omega)=\big\{u=\sum_{k=1}^\infty u_k\varphi_k\in L^2(\Omega):
\;\;\|u\|_{\mathbb H^s(\Omega)}^2=\sum_{k=1}^\infty \lambda_k^su_k^2<\infty\big\}, 
\quad u_k=\int_{\Omega}u\varphi_k\dv{x}.
\end{align*} 
where $\{\lambda_k\}_{k\in \N}$ and $\{\varphi_k\}_{k\in \N}$ are the eigenvalues and
corresponding normalized eigenfunctions of the standard 
Laplacian for homogeneous Dirichlet boundary conditions.

The spectral fractional Dirichlet Laplacian is defined on the space $\mathbb H^s(\Omega)$ by
\begin{align*}
(-\Delta_{\Omega})^su=\sum_{k=1}^\infty\lambda_k^su_k\varphi_k \, ,\qquad u_k=\int_{\Omega}u\varphi_k\dv{x}.
\end{align*}
We have that $\|u\|_{\mathbb H^s(\Omega)}=\|\rlaps{s}u\|_{L^2(\Omega)}$. An integral representation 
of the operator $(-\Delta_{\Omega})^s$ from \cite[Eq.~(1.3)]{LACaffarelli_PRStinga_2016a} states that
for almost every $x\in\Omega$ we have 
\begin{align}\label{int-rep}
(-\Delta_{\Omega})^su(x)=\mbox{P.V.}\int_{\Omega}\left[u(x)-u(y)\right]J(x,y)\dv{y} +\kappa(x)u(x) .
\end{align}
Letting $K_\Omega(t,x,y)$ denote the heat kernel of the semigroup generated by standard Laplace
operator on $L^2(\Omega)$ and $\G$ be the usual Gamma function we have  
\begin{align*}
J(x,y)& =\frac{s}{\Gamma(1-s)}\int_0^\infty\frac{K_\Omega(t,x,y)}{t^{1+s}} \dv{t}, \\
\kappa(x)& =\frac{s}{\Gamma(1-s)}\int_0^\infty\Big(1-\int_{\Omega}K_\Omega(t,x,y)\dv{y}\Big)\frac{\dv{t}}{t^{1+s}} .
\end{align*}
From the properties of $K_\Omega$ we have that $J$ is symmetric and nonnegative and
that $\kappa$ is nonnegative. Moreover, we have the 
estimate \cite[Theorem 2.3]{LACaffarelli_PRStinga_2016a} 
\begin{equation}\label{eq:Jest} 
 0 \leq J(x,y) \aleq |x-y|^{-d-s}.
\end{equation}
As in section~\ref{s:compact} we define for $f,g \in \mathbb H^s(\Omega)$
\[
H_{s,\Omega}(f,g) = \rlaps{s} (fg) -f \rlaps{s} g - (\rlaps{s} f) g  
\]
We note that the operator extends to general bilinear operations on vector
fields $f$ and $g$ in a canonical way. Analogously to~\Cref{pr:strongconv} we 
have the following weak continuity property of the operator $H_{s,\O}$.

\begin{proposition}\label{pr:regstrongconv}
Let $\Omega \subset \R^d$ be bounded Lipschitz, $s \in (0,2)$ and $s < d$.
Assume that 
\[
 \sup_{j} \|f_j\|_{L^\infty(\Omega)} + \|f_j\|_{\mathbb H^s(\Omega)} < \infty
\]
and 
\[
\lim_{j \to \infty} \|f_j -f\|_{L^2(\Omega)} = 0.
\] 
Given any $\varphi \in C_c^\infty(\Omega)$ we have
\[
 \|H_{s,\Omega}(f_j,\varphi) - H_{s,\Omega}(f,\varphi)\|_{L^2(\Omega)} \to 0
\]
as $j\to \infty$.
\end{proposition}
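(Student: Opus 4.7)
The plan is to mirror the strategy of \Cref{pr:strongconv}, but using the integral representation \eqref{int-rep} of $(-\Delta_\Omega)^s$ in place of the full-space representation. Set $g_j = f_j - f$, so that $\{g_j\}$ is uniformly bounded in $L^\infty(\Omega) \cap \mathbb H^s(\Omega)$ and $g_j \to 0$ in $L^2(\Omega)$; by bilinearity it suffices to show $\|H_{s,\Omega}(g_j,\varphi)\|_{L^2(\Omega)} \to 0$. Substituting \eqref{int-rep} into each of the three terms defining $H_{s,\Omega}(g_j,\varphi)$ and noting that the $\kappa$-contributions from $\rlaps{s}(g_j\varphi)$, $\varphi\rlaps{s}g_j$, and $g_j\rlaps{s}\varphi$ algebraically combine to $-\kappa g_j\varphi$, while the three principal-value integrals collapse by the familiar commutator identity, one obtains
\[
H_{s,\Omega}(g_j,\varphi)(x) = -\mbox{P.V.}\int_\Omega [g_j(x)-g_j(y)][\varphi(x)-\varphi(y)]\, J(x,y) \dv{y} - \kappa(x)\, g_j(x)\,\varphi(x),
\]
which is the spectral analogue of the starting point in the proof of \Cref{pr:strongconv}.

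The killing term is the easy part: since $\varphi \in C_c^\infty(\Omega)$ and Gaussian upper bounds on the Dirichlet heat kernel $K_\Omega$ yield that $\kappa$ is bounded on every compact subset of $\Omega$, we have $\kappa\varphi \in L^\infty(\Omega)$ and therefore $\|\kappa g_j \varphi\|_{L^2(\Omega)} \lesssim_\varphi \|g_j\|_{L^2(\Omega)} \to 0$. For the principal integral, I focus first on $s \in (0,1)$. Fix $t \in (0,s)$ with $s-t < 1$. Using \eqref{eq:Jest} and Cauchy--Schwarz with the weight split $|x-y|^{-(d+s)} = |x-y|^{-(d+2t)/2}\cdot|x-y|^{-(d+2(s-t))/2}$ gives pointwise
\[
\Big|\mbox{P.V.}\!\int_\Omega [g_j(x)-g_j(y)][\varphi(x)-\varphi(y)] J(x,y) \dv{y}\Big|^{2} \lesssim \Psi_\varphi(x) \int_\Omega \frac{|g_j(x)-g_j(y)|^2}{|x-y|^{d+2t}} \dv{y},
\]
where $\Psi_\varphi(x) := \int_\Omega |\varphi(x)-\varphi(y)|^2 |x-y|^{-d-2(s-t)}\dv{y}$ is bounded uniformly in $x \in \Omega$ by the Lipschitz bound $|\varphi(x)-\varphi(y)| \leq \|\nabla\varphi\|_\infty |x-y|$, since $s-t<1$ guarantees integrability of $|x-y|^{2-d-2(s-t)}$ over $\Omega$. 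Integrating in $x$ therefore yields $\|H_{s,\Omega}(g_j,\varphi) + \kappa g_j \varphi\|_{L^2(\Omega)} \lesssim_\varphi [g_j]_{H^t(\Omega)}$.

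To conclude, I would invoke the continuous embedding $\mathbb H^s(\Omega) \hookrightarrow H^s(\Omega)$ on bounded Lipschitz domains to deduce that $\{g_j\}$ is bounded in $H^s(\Omega)$; then Rellich--Kondrachov gives the compact embedding $H^s(\Omega) \hookrightarrow H^t(\Omega)$ for $t < s$, and combined with $g_j \to 0$ in $L^2(\Omega)$ a subsequence argument forces $[g_j]_{H^t(\Omega)} \to 0$ along the full sequence. For $s \in [1,2)$ I would handle the higher-order singularity of the kernel as in \Cref{rem:sge1}: writing $s = 1 + s'$ with $s' \in [0,1)$ and invoking the classical product rule for $-\Delta_\Omega$ reduces the problem to the already-treated range, where the $L^\infty(\Omega)$ hypothesis on $f_j$ (barely used in the $s<1$ case) becomes genuinely convenient for controlling the resulting cross terms involving $\nabla f_j$. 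The main delicacy is the interplay between the spectral and Sobolev scales — specifically, the continuous embedding $\mathbb H^s(\Omega) \hookrightarrow H^s(\Omega)$ and the uniform-in-$x$ boundedness of $\Psi_\varphi$ — both standard but essential ingredients; the remainder of the argument is a direct transcription of \Cref{pr:strongconv}.
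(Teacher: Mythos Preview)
Your argument is correct and takes a genuinely different route from the paper's. The paper introduces a cutoff $\eta\in C_c^\infty(\Omega)$ with $\eta\equiv 1$ near $\supp\varphi$, sets $\tilde g_j=\eta g_j$, uses an extension estimate (\Cref{la:extensionest}) to transfer the bound $\|g_j\|_{\mathbb H^s(\Omega)}$ to $\|\laps{s}\tilde g_j\|_{L^2(\R^d)}$, and then literally reruns the four-term decomposition $I+II+III+IV$ of \Cref{pr:strongconv} on the split $\Omega=K\cup(\Omega\setminus K)$, invoking the maximal-function estimate and \Cref{pr:RellichweakconvOmega} for the near-diagonal piece. Your approach bypasses all of this: a single Cauchy--Schwarz with the weight split $|x-y|^{-d-s}=|x-y|^{-(d+2t)/2}|x-y|^{-(d+2(s-t))/2}$ plus the elementary bound $\Psi_\varphi\in L^\infty$ (from the Lipschitz continuity of $\varphi$ and $s-t<1$) gives $\|H_{s,\Omega}(g_j,\varphi)+\kappa g_j\varphi\|_{L^2(\Omega)}\aleq [g_j]_{H^t(\Omega)}$ directly, and the conclusion follows from the standard compact embedding $\mathbb H^s(\Omega)\hookrightarrow H^s(\Omega)\hookrightarrow H^t(\Omega)$. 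This is more elementary---no maximal function, no Riesz potentials, no cutoff/extension machinery---while the paper's version has the virtue of being a verbatim transcription of \Cref{pr:strongconv} once the extension is in place.

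Two minor remarks. First, you correctly isolate the killing term $-\kappa g_j\varphi$, which the paper's displayed formula for $H_{s,\Omega}(g_j,\varphi)$ omits; your treatment of it via local boundedness of $\kappa$ on $\supp\varphi$ is sound. Second, your Cauchy--Schwarz argument already works for the full range $s\in(0,2)$ provided you take $t\in(\max(0,s-1),s)$, so the case split and the appeal to \Cref{rem:sge1} for $s\in[1,2)$ are unnecessary: the integral representation \eqref{int-rep} applies to $(-\Delta_\Omega)^{s/2}$ for all such $s$, and the integrability of $|x-y|^{2-d-2(s-t)}$ holds whenever $s-t<1$.
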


\begin{proof}
We abbreviate $g_j = f_j-f$ and note that using the integral representation of the operator
$(-\Delta_\O)^{\frac{s}{2}}$ we have  
\[
 H_{s,\Omega}(g_j,\varphi) = \int_{\Omega} J(x,y) \brac{g_j(x)-g_j(y)}\brac{\varphi(x)-\varphi(y)} \dv{y} \dv{x}.
\]
Since $\varphi \in C_c^\infty(\Omega)$ is fixed, 
we can choose $\eps > 0$ and $\eta \in C_c^\infty(\Omega)$ such that $\eta(x) \equiv 1$ 
whenever $\dist(x,\supp\varphi) \leq \eps$ and $\eta \equiv 0$ whenever $\dist(x,\partial \Omega) \leq \eps$. 
Let $K = \{x \in \O: \eta(x) =1\}$.

Set $\tilde{g}_j = \eta g_j$. We then have by \Cref{la:extensionest} below that 
\begin{equation}\label{eq:tgjest}
 \|\laps{s} \tilde{g}_j \|_{L^2(\R^d)} + \|\tilde{g}_j \|_{L^2(\R^d)} \leq C(\eps) \|g_j\|_{\mathbb{H}^s(\Omega)}.
\end{equation}
We split integrals using the partition of $\O$ into $\O\setminus K$ and $K$ to obtain the
estimate 
\[
\|H_{s,\Omega}(g_j,\varphi)\|_{L^2(\Omega)}^2 \aleq I + II + III + IV,
\]
where 
\[\begin{split}
 I &= \int_{\Omega \backslash K} \abs{\int_{\Omega \backslash K} J(x,y) \brac{g_j(x)-g_j(y)}\brac{\varphi(x)-\varphi(y)} dy}^2 \dv{x}, \\
 II &= \int_{K} \abs{\int_{\Omega \backslash K} J(x,y) \brac{\tilde{g}_j(x)-\tilde{g}_j(y)}\varphi(x) dy}^2 \dv{x}, \\
 III &= \int_{\Omega \backslash K} \abs{\int_{K} J(x,y) \brac{\tilde{g}_j(x)-\tilde{g}_j(y)}\varphi(y) dy}^2 \dv{x}, \\
 IV &= \int_{K} \abs{\int_{K} J(x,y) \brac{\tilde{g}_j(x)-\tilde{g}_j(y)}\brac{\varphi(x)-\varphi(y)} dy}^2 \dv{x} .
\end{split}\]
We show that the terms $I,II,III,IV$ converge to zero as $j \to \infty$ to deduce the asserted
result. \\
\emph{Estimate for I.} Recalling that $\supp \varphi \subset K$ we find that $I=0$. \\
\emph{Estimate of II and III.} We observe that if $\varphi(x) = 0$ then 
$\dist(x,\Omega \backslash K) \geq \eps$. Thus if $x \in \Omega \backslash K$ and $y \in \supp \varphi$ (or $y \in \Omega \backslash K$ and $x \in \supp \varphi$) then $|x-y| \ageq \eps$ and thus by \eqref{eq:Jest} and thus
\[
 J(x,y) \leq C(\Omega,\eps,s)\, \min\{1+|x|,1+|y|\}^{-d-s}.
\]
We then argue exactly as in the proof of \Cref{pr:strongconv} to obtain
\[
 II + III \aleq \|\varphi\|_{L^\infty(\R^d)}^2 \|g_j\|_{L^2(\Omega)}^2.
\]
\emph{Estimate of IV.} As in the proof of \Cref{pr:strongconv} we obtain for some $t < s$ that 
\[
 IV \aleq \|\nabla \varphi\|_{L^\infty(\R^d)}^2\, \|\laps{t} \tilde{g}_j\|_{L^2(\R^d)}^2.
\]
Combining the estimates for $I,II,III,IV$ we obtain
\[
  \|H_{s,\Omega}(g_j,\varphi)\|_{L^2(\Omega)} \aleq \brac{\|\varphi\|_{L^\infty(\R^d)} + \|\nabla \varphi\|_{L^\infty(\R^d)}}\, \brac{\|g_j\|_{L^2(\Omega)} + \|\laps{t} \tilde{g}_j\|_{L^2(\R^d)}}
\]
By assumption we have $\|g_j\|_{L^2(\Omega)} \to 0$ as $j\to \infty$ 
and in view of \eqref{eq:tgjest} and \Cref{pr:RellichweakconvOmega} that
\[
 \|\laps{t} \tilde{g}_j\|_{L^2(\R^d)} \to 0.
\]
This concludes the proof.
\end{proof}

The following auxiliary estimate is needed in the proof of~\Cref{pr:regstrongconv}.

\begin{lemma}\label{la:extensionest}
Let $\Omega \subset \R^d$ be a bounded set and $\eta \in C_c^\infty(\Omega)$. 
Then for any $g \in \mathbb{H}^s(\Omega)$ we have $\eta g \in H^s(\R^d)$ with the estimate
\[
 \|\laps{s} (\eta g) \|_{L^2(\R^d)} + \|\eta g \|_{L^2(\R^d)} \leq C(\eta) \|g\|_{\mathbb{H}^s(\Omega)}.
\]
\end{lemma}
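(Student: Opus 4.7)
The plan is to separate the two summands in the inequality. The $L^2$-bound is immediate from the spectral definition,
\[
\|\eta g\|_{L^2(\R^d)}=\|\eta g\|_{L^2(\Omega)}\leq \|\eta\|_{L^\infty}\|g\|_{L^2(\Omega)}\leq \lambda_1^{-s/2}\|\eta\|_{L^\infty}\|g\|_{\mathbb{H}^s(\Omega)},
\]
where $\lambda_1$ is the first Dirichlet eigenvalue, so the work lies in bounding $\|\laps{s}(\eta g)\|_{L^2(\R^d)}$. Write $\tilde g:=\eta g$ extended by zero to $\R^d$ and choose $\delta>0$ with $\supp\eta\subset\Omega_\delta:=\{x\in\Omega:\dist(x,\partial\Omega)>\delta\}$. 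Restricting first to $s\in(0,1)$, I would use the Gagliardo identity
\[
\|\laps{s}\tilde g\|_{L^2(\R^d)}^2\aeq\int_{\R^d}\int_{\R^d}\frac{|\tilde g(x)-\tilde g(y)|^2}{|x-y|^{d+2s}}\dv{y}\dv{x}
\]
and split the integration region into $\Omega\times\Omega$, the two symmetric mixed terms, and $(\R^d\setminus\Omega)\times(\R^d\setminus\Omega)$. The last piece vanishes because $\tilde g\equiv 0$ outside $\Omega$, while in the mixed pieces one has $x\in\supp\eta$ and $y\in\R^d\setminus\Omega$, so $|x-y|\geq\delta$ and the total contribution is at most $C(\delta)\|g\|_{L^2(\Omega)}^2$.

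For the remaining $\Omega\times\Omega$ piece, the Leibniz-type pointwise bound
\[
|\tilde g(x)-\tilde g(y)|^2\leq 2\|\eta\|_{L^\infty}^2|g(x)-g(y)|^2+2\|\nabla\eta\|_{L^\infty}^2|x-y|^2|g(y)|^2
\]
separates the estimate. The $|x-y|^2$-part is handled by the integrability of $|x-y|^{2-d-2s}$ on $\Omega$ when $s<1$ and contributes $C(\eta,\Omega)\|g\|_{L^2(\Omega)}^2$. What then remains is the Gagliardo-spectral comparison
\begin{equation}
\label{eq:keycompare}
\int_{\supp\eta}\int_\Omega\frac{|g(x)-g(y)|^2}{|x-y|^{d+2s}}\dv{y}\dv{x}\leq C(\delta)\|g\|_{\mathbb{H}^s(\Omega)}^2,
\end{equation}
which will be the main obstacle. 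To obtain \eqref{eq:keycompare} I would pair \eqref{int-rep} against $g$ to get
\[
\|g\|_{\mathbb{H}^s(\Omega)}^2=\tfrac{1}{2}\int_\Omega\int_\Omega(g(x)-g(y))^2J(x,y)\dv{y}\dv{x}+\int_\Omega\kappa(x)g(x)^2\dv{x},
\]
and then invoke the interior Aronson-type lower bound $K_\Omega(t,x,y)\ageq c\,t^{-d/2}e^{-C|x-y|^2/t}$ valid whenever $\dist(x,\partial\Omega),\dist(y,\partial\Omega)\geq\delta/2$. Time-integration gives $J(x,y)\ageq c(\delta)|x-y|^{-d-2s}$ on $\Omega_{\delta/2}\times\Omega_{\delta/2}$; this dominates the contribution of \eqref{eq:keycompare} from $y\in\Omega_{\delta/2}$, while for $y\in\Omega\setminus\Omega_{\delta/2}$ one again has $|x-y|\geq\delta/2$ and the integral is controlled by $C(\delta)\|g\|_{L^2(\Omega)}^2$.

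For $s\in[1,2)$, which is also permitted in \Cref{pr:regstrongconv}, I would reduce to the previous case by interpolation. The map $g\mapsto\tilde g$ is bounded from $\mathbb{H}^s(\Omega)$ into $H^s(\R^d)$ at the integer endpoints $s=0,1,2$: the case $s=0$ is trivial, $s=1$ follows from $\|g\|_{\mathbb{H}^1(\Omega)}=\|\nabla g\|_{L^2(\Omega)}$ combined with the product rule $\nabla\tilde g=(\nabla\eta)g+\eta\nabla g$, and $s=2$ uses the elliptic regularity bound $\|g\|_{H^2(\Omega)}\aleq\|\Delta g\|_{L^2(\Omega)}=\|g\|_{\mathbb{H}^2(\Omega)}$ valid for $g\in H^1_0(\Omega)$. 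Real interpolation of the Hilbert-space scales then identifies $(\mathbb{H}^0,\mathbb{H}^2)_{\theta,2}=\mathbb{H}^{2\theta}$ and $(L^2(\R^d),H^2(\R^d))_{\theta,2}=H^{2\theta}(\R^d)$, closing the estimate for every $s\in(0,2)$. The essential difficulty throughout is the interior comparison \eqref{eq:keycompare}, which relies on quantitative heat-kernel estimates that hold far from $\partial\Omega$ but would fail uniformly up to the boundary — compact support of $\eta$ is precisely what makes the argument go through.
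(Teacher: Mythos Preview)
Your proof is correct, but it takes a considerably longer route than the paper's. The paper dispatches the lemma in two lines by pure interpolation: the multiplication map $T_\eta g = \eta g$ is bounded from $L^2(\Omega)$ to $L^2(\R^d)$ and from $W^{1,2}(\Omega)$ to $W^{1,2}(\R^d)$ (both trivially, via the Leibniz rule), and since $\mathbb{H}^s(\Omega)$ and $H^s(\R^d)$ are obtained as interpolation spaces between these endpoints, the estimate follows. This is precisely the mechanism you deploy at the end for $s\in[1,2)$, only with endpoints $0,1$ instead of $0,1,2$ --- and as you yourself observe, that interpolation already covers every $s\in(0,2)$, which makes your direct Gagliardo computation for $s\in(0,1)$, together with the interior Aronson-type lower bound on $K_\Omega$ needed for \eqref{eq:keycompare}, unnecessary. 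What your hands-on argument buys is a transparent explanation of why compact support of $\eta$ is essential: it confines the analysis away from $\partial\Omega$, where $J(x,y)$ can degenerate. What the paper's argument buys is brevity and complete avoidance of pointwise kernel estimates. Your inclusion of the $s=2$ endpoint is in fact a mild improvement over the paper, whose stated interpolation between $L^2$ and $W^{1,2}$ formally yields only $s\in(0,1)$; both approaches share the caveat that the $H^2$-elliptic regularity invoked at $s=2$ requires more smoothness of $\partial\Omega$ than the bare hypothesis ``bounded set''.
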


\begin{proof}
The estimate follows by an interpolation argument of the 
mapping $T_\eta g = \eta g$ as a bounded linear operator $L^2(\O)\to L(\R^d)$ and 
$W^{1,2}(\Omega) \to W^{1,2}(\R^d)$ in combination with the fact that the spaces
$\mathbb{H}^s(\Omega)$ and $W^{s,2}(\R^d)$ are equivalently obtained via interpolation. 
\end{proof}

\bibliographystyle{abbrv}
\bibliography{bib_frac_hm_num}

\end{document}